\documentclass[10pt]{article}

\usepackage[T1]{fontenc}
\usepackage{lmodern}
\usepackage[utf8]{inputenc}
\usepackage{microtype}
\usepackage{framed}
\usepackage{listings}
\usepackage{vmargin}
\usepackage{setspace}
\usepackage{mathrsfs, mathenv}
\usepackage{amsmath, amsthm, amssymb, amsfonts, amscd}
\usepackage{graphicx}
\usepackage{epstopdf}
\usepackage[svgnames]{xcolor}
\usepackage{hyperref}
\hypersetup{citecolor=blue, colorlinks=true, linkcolor=black}
\setlength{\parskip}{6pt}
\setlength\parindent{0pt}
\usepackage{subcaption}
\usepackage{bbm}
\usepackage{cite}
\usepackage{verbatim}
\usepackage{pgfplots}
\usepackage{tikz}
\usepackage{etoolbox}
\usepackage{color}
\usepackage{lipsum}
\usepackage{ifthen}
\usepackage[linesnumbered, ruled, vlined]{algorithm2e}
\usepackage{soul}
\usepackage{multicol}


\theoremstyle{plain}
\newtheorem{theorem}{Theorem}

\newtheorem{lemma}{Lemma}
\newtheorem{proposition}{Proposition}

\theoremstyle{definition}
\newtheorem{definition}{Definition}

\theoremstyle{remark}
\newtheorem{remark}{Remark}
\newtheorem{assumption}{Assumption}

\ifpdf
  \DeclareGraphicsExtensions{.eps,.pdf,.png,.jpg}
\else
  \DeclareGraphicsExtensions{.eps}
\fi

\usepackage{mathtools}
\mathtoolsset{showonlyrefs}


\usepackage{booktabs}

\usepackage{pgfplots}
\usepackage{tikz}
\usetikzlibrary{patterns,arrows,decorations.pathmorphing,backgrounds,positioning,fit,matrix}
\usepackage[labelfont=bf]{caption}
\setlength{\belowcaptionskip}{-5pt}
\usepackage{here}
\usepackage[font=normal]{subcaption}

\usepackage{enumitem}
\setlist[itemize]{leftmargin=.5in}
\setlist[enumerate]{leftmargin=.5in,topsep=3pt,itemsep=3pt,label=(\roman*)}



\newcommand{\TheTitle}{Ensemble Kalman filter for multiscale inverse problems} 
\newcommand{\TheAuthors}{A. Abdulle, G. Garegnani}
\title{{\TheTitle}}
\newcommand*\samethanks[1][\value{footnote}]{\footnotemark[#1]}
\author{Assyr Abdulle\thanks{Institute of Mathematics, \'Ecole Polytechnique F\'ed\'erale de Lausanne}
	\and
	Giacomo Garegnani\samethanks
	\and 
	Andrea Zanoni\samethanks
	}
\date{}

\newcommand{\toweak}{\rightharpoonup}

\usepackage{amsopn}

\newcommand{\abs}[1]{\left\lvert#1\right\rvert}
\newcommand{\norm}[1]{\left\|#1\right\|}
\renewcommand{\phi}{\varphi}
\renewcommand{\theta}{\vartheta}

\newcommand{\N}{\mathbb{N}}
\newcommand{\R}{\mathbb{R}}

\newcommand{\epl}{\varepsilon}

\newcommand{\defeq}{\coloneqq}
\newcommand{\eqdef}{\eqqcolon}

\newcommand{\E}{\operatorname{\mathbb{E}}}

\definecolor{shade}{RGB}{100, 100, 100}
\definecolor{bordeaux}{RGB}{128, 0, 50}

\usepackage[usestackEOL]{stackengine}

\definecolor{leg1}{RGB}{0,114,189}
\definecolor{leg2}{RGB}{217,83,25}
\definecolor{leg3}{RGB}{237,177,32}
\definecolor{leg4}{RGB}{126,47,142}
\definecolor{leg5}{RGB}{119,172,48}

\definecolor{leg21}{RGB}{62,38,169}
\definecolor{leg22}{RGB}{46,135,247}
\definecolor{leg23}{RGB}{55,200,151}
\definecolor{leg24}{RGB}{254,195,56}

\ifpdf
\hypersetup{
	pdftitle={\TheTitle},
	pdfauthor={\TheAuthors}
}
\fi

\begin{document}
\maketitle

\begin{abstract} We present a novel algorithm based on the ensemble Kalman filter to solve inverse problems involving multiscale elliptic partial differential equations. Our method is based on numerical homogenization and finite element discretization and allows to recover a highly oscillatory tensor from measurements of the multiscale solution in a computationally inexpensive manner. The properties of the approximate solution are analysed with respect to the multiscale and discretization parameters, and a convergence result is shown to hold. A reinterpretation of the solution from a Bayesian perspective is provided, and convergence of the approximate conditional posterior distribution is proved with respect to the Wasserstein distance. A numerical experiment validates our methodology, with a particular emphasis on modelling error and computational cost.
\end{abstract}

\textbf{AMS subject classifications.} 62G05, 65N21, 74Q05.

\textbf{Key words.} Inverse problems, Multiscale modelling, Homogenization, Ensemble Kalman filter, Bayesian inference, Modelling error.

\section{Introduction}

In this work we consider the application of techniques derived from the Kalman filter to inverse problems involving multiscale phenomena which can be modelled by means of partial differential equations (PDEs). Inverse problems arise in many fields, such as seismography, meteorology and tomography, all physical domains with a multiscale nature. Our reference mathematical model is given by multiscale elliptic PDEs of the form
\begin{equation}
\left\{
\begin{alignedat}{2}
- \nabla \cdot ( A^{\varepsilon}_u \nabla p^{\varepsilon} ) &= f, \quad && \text{ in } \Omega, \\
p^{\varepsilon} &= 0, \quad && \text{ on } \partial \Omega,
\end{alignedat}
\right.
\end{equation}
where $\Omega \subset \R^d$ is the physical domain, $A_u^\epl$ is a tensor oscillating with an amplitude described by the parameter $\epl$ and $u$ is a possibly infinite-dimensional unknown which parametrizes the tensor $A^\epl_u$. We are then interested in the solution of inverse problems involving the retrieval of the parameter $u$ given noisy observations derived from the solution $p^\epl$.

Multiscale inverse problems of this form have been recently introduced in \cite{NPS12} and analysed extensively in \cite{AbD19, AbD20}. In particular, in \cite{AbD19} Abdulle and Di Blasio build a coarse-graining approach to solve the inverse problem regularized with a Tikhonov technique. The main idea is replacing the computationally expensive solution of the highly-oscillating multiscale problem with an homogenized surrogate, which eliminates the fast variables and is therefore cheaper. In particular, the theory of homogenization guarantees under certain assumptions, which will be specified throughout this work, that there exists a PDE of the form
\begin{equation}
\left\{
\begin{alignedat}{2}
- \nabla \cdot ( A^0_u \nabla p^0 ) &= f, \quad && \text{ in } \Omega, \\
p^0 &= 0, \quad && \text{ on } \partial \Omega,
\end{alignedat}
\right.
\end{equation}
such that the solution $p^0$ is the weak limit of the functions $p^\epl$ in the vanishing limit for $\epl$, and such that $A_u^0$ is independent of $\epl$. In \cite{AbD19}, the authors showed that employing this homogenized model to the multiscale inverse problem guarantees a good approximation to its solution if a Tikhonov regularization is employed. This framework has been successively enlarged by the same authors to the Bayesian case in \cite{AbD20}, where the analysis involves posterior distributions arising from both the multiscale and the homogenized model. In the same work, a technique for estimating the modelling error which was developed in \cite{CES14, CDS18} is successfully applied to multiscale inverse problems to account for the homogenization and discretization errors.

The ensemble Kalman filter (EnKF), first introduced in \cite{Eve94}, is an algorithm which is widely employed in the engineering community for the estimation of the state of partially-observed dynamical systems whose dynamics are governed by a nonlinear agent. In particular, Kalman filters have long been used successfully in meteorology, oceanography and automation applications. In \cite{ILS13}, Iglesias et al. propose the application of the EnKF method to obtain a point-wise solution to inverse problems involving PDEs, and an extension of their analysis giving a Bayesian interpretation of the filtering solution is presented in \cite{ScS17}.

In this work, we present a combination of the well-established techniques of homogenization and filtering to build a novel scheme for solving multiscale inverse problems in an efficient and reliable manner. In the same spirit of \cite{AbD19, AbD20}, we prove that it is possible to eliminate the fast scales from the PDE appearing in the inverse problem relying on the theory of homogenization, thus obtaining a solution which is accurate in the vanishing limit for the multiscale parameter $\epl$. In our analysis, we both consider point-wise estimations as in \cite{ILS13} and Bayesian solutions as in \cite{ScS17}, thus showing convergence results which are endowed with decay rates under special assumptions on the problem. Inspired by \cite{CES14, CDS18, AbD20}, we then consider offline and online techniques for estimating the modelling error and prove a novel result indicating the computational cost which is required for such an estimation for any given multiscale problem.

In general, the EnKF has two main advantages with respect to other approaches. First, a Bayesian interpretation of the solution to the inverse problem is obtained from the algorithm without any additional cost. The Bayesian paradigm, frequently adopted in the context of inverse problems involving PDEs, provides a full uncertainty quantification on the solution and is therefore preferable to a point-wise estimation. Secondly, the EnKF is easily parallelizable, thus allowing in practice to solve complex inverse problems faster than employing, e.g., Markov chain Monte Carlo methods.

The main contributions of this paper are:
\begin{itemize}
\item to introduce a new method based on filtering techniques and numerical homogenization, which is computationally efficient and easy parallelizable to solve multiscale inverse problems;
\item to analyze theoretically the convergence properties of our method both from a point-wise and a Bayesian perspectives, proving the results of convergence of the EnKF scheme in the multiscale setting;
\item to estimate the modelling error caused by homogenization and discretization, and prove a novel theoretical results which strengthens its value in practice.
\end{itemize}

The outline of the work is the following. In Section \ref{Kalman} we briefly summarize the technique of ensemble Kalman inversion, show how it can be applied to multiscale inverse problems and state our main theoretical results. In Section \ref{Convergence} we present the analysis of our theoretical results, and Section \ref{Modelling} is dedicated to the estimation of the modelling error. Finally, in Section \ref{Experiments} we present a series of numerical experiments which corroborate our analysis.

\section{Ensemble Kalman inversion for multiscale problems}\label{Kalman}

In this section, we present the ensemble Kalman inversion technique for multiscale inverse problems. First, we introduce a generic framework and illustrate how the EnKF is employed to solve an inverse problem. Then we particularize to a inverse problems involving multiscale elliptic PDEs, and we conclude this section by announcing our main theoretical results. For a more exhaustive treatment of the EnKF in a generic PDE context, we refer the reader to \cite{ILS13, ScS17}.

\subsection{Ensemble Kalman inversion}
We first give a brief summary of the ensemble Kalman inversion for problems of the form
	\begin{equation}\label{inverse_problem}
\text{find } u \in X \text{ given observations } y = \mathcal G(u) + \eta \in Y,
\end{equation}
where $X$ and $Y$ are Hilbert spaces, the operator $\mathcal G \colon X \to Y$ is a generic forward map and the noise $\eta$ follows the Gaussian distribution $\eta \sim \mathcal N(0, \Gamma)$ with a symmetric positive definite covariance $\Gamma$. Kalman filters are traditionally employed to estimate the state of a dynamical system given partial and noisy observations of its state. In order to approximate the solution of the otherwise static problem \eqref{inverse_problem}, it is therefore natural to introduce some artificial dynamics. Let us consider the space $Z = X \times Y$ and the map $\Xi \colon Z \to Z$ given by
\begin{equation}
	\Xi(z) = \begin{bmatrix} u \\ \mathcal G(u) \end{bmatrix}, \quad \text{ for } \quad z = \begin{bmatrix} u \\ v \end{bmatrix} \in Z,
\end{equation}
Given an initial value $z_0 \in Z$, we define artificial discrete dynamics on $Z$ through the recursion
\begin{equation}\label{artificial_dynamics}
z_{n+1} = \Xi(z_n), \quad n = 0, 1, \ldots
\end{equation}
The dynamics on $Z$ are completed consistently with the problem \eqref{inverse_problem} by the observation equation
\begin{equation}\label{eq:obs_model}
y_{n+1} = H z_{n+1} + \eta_{n+1},
\end{equation} 
where $H \colon Z \to Y$ is the projection operator defined by $H = \begin{bmatrix} 0 & I \end{bmatrix}$ and $\{\eta_n \}_{n \in \mathbb{N}}$ is an i.i.d. sequence of random variables distributed identically to the noise of the inverse problem \eqref{inverse_problem}, i.e., $\eta_n \sim \mathcal N(0, \Gamma)$. In fact, let us remark that combining \eqref{artificial_dynamics} and \eqref{eq:obs_model} one gets $y_{n+1} = \mathcal G(u_n) + \eta_{n+1}$, which is in law equivalent to the equality appearing in \eqref{inverse_problem}.

Kalman filters proceed recursively to estimate the state of dynamics of the form \eqref{artificial_dynamics} when observations are provided by the model \eqref{eq:obs_model}. At each time $n$, the estimation is performed in two steps. First, equation \eqref{artificial_dynamics} is employed in the so-called \textit{prediction} step, and then \eqref{eq:obs_model} is employed to correct the prediction in the \textit{update} or \textit{analysis} step. In case $\Xi$ is a linear map, both prediction and update steps admit a closed-form expression, often referred to in literature as the Kalman formulae. Conversely, in case $\Xi$ is nonlinear, there exist no explicit solution to the estimation problem and one has to recur to an approximation such as the EnKF method, which we briefly describe here.

Given a positive integer $J$, the EnKF method proceeds by propagating and updating an ensemble $\{ z_n^{(j)} \}_{j = 1}^J \subset Z$ of particles with discrete approximations of the Kalman formulae. Let $\mathcal A \subset X$ be such that $\dim(\mathcal A) \leq J$, and let the initial ensemble $\{ z_0^{(j)} \}_{j = 1}^J$ to be given by
\begin{equation*}
z_0^{(j)} = \begin{bmatrix} \psi^{(j)} \\ \mathcal G(\psi^{(j)}) \end{bmatrix},
\end{equation*}
where $\{ \psi^{(j)} \}_{j = 1}^J \subset \mathcal A$. At each time $n = 0, 1, \ldots, N-1$, and for each $j = 1, \ldots, J$, the prediction step is simply given by
\begin{equation}\label{eq:EnKF_predict}
	\hat{z}_{n+1}^{(j)} = \Xi(z_n^{(j)}).
\end{equation}
In the analysis step, this partially-updated ensemble is updated given knowledge of the data $y$. For better exploring the space $Y$, the data is randomized and each particle $z_{n+1}^{(j)}$ is compared to i.i.d. versions of the data given by $y_{n+1}^{(j)} = y + \eta_{n+1}^{(j)}$, where $\eta_{n+1}^{(j)} \sim \mathcal N(0, \Gamma)$. The analysis step is then given by
\begin{equation}\label{KalmanUpdate}
	z_{n+1}^{(j)} = \hat{z}_{n+1}^{(j)} + K_{n+1} (y_{n+1}^{(j)} - H \hat{z}_{n+1}^{(j)}).
\end{equation}
The operator $K_{n+1} \colon Y \to Z$, the Kalman gain, weighs the effects of dynamics and observations in this two-step procedure, and is defined as
\begin{equation}\label{KalmanGain}
	K_{n+1} = C_{n+1} H^* R_{n+1}, \qquad R_{n+1} = (H C_{n+1} H^* + \Gamma)^{-1},
\end{equation}
where $C_{n+1}\colon Z \to Z$ is the empirical covariance of the partially-updated ensemble $\{\hat{z}_{n+1}^{(j)}\}_{j=1}^J$, the operator $H^*\colon Y \to Z$ is the adjoint of $H$, which is given in \eqref{eq:obs_model}, and we recall $\Gamma$ to be the covariance of the noise $y$, so that $R_{n+1} \colon Y \to Y$. Intuitively, one can notice that when the ensemble's covariance $C_{n+1}$ is large with respect to the noise covariance $\Gamma$, i.e., the observation model is more precise than the dynamics, we will have $z_{n+1}^{(j)} \approx y_{n+1}^{(j)}$, while in the opposite case we will have $z_{n+1}^{(j)} \approx \hat z_{n+1}^{(j)}$. A more precise definition of the operators appearing above will be given in Section \ref{Convergence}. At the final step $N$, we project the particles on the space $X$ and average the result to obtain the estimate
\begin{equation*}
	u_{\mathrm{EnKF}} = \frac{1}{J} \sum_{j = 1}^J H^{\perp} z_{N}^{(j)} = \frac{1}{J} \sum_{j = 1}^J u_{N}^{(j)},
\end{equation*}
where $H^{\perp} \colon Z \to X$ is defined by $H^{\perp} = \begin{bmatrix} I & 0 \end{bmatrix}$. The last detail missing to fully define the EnKF is its initialization, i.e., the choice of the space $\mathcal A$ defining the initial ensemble. We assume prior knowledge is available on the parameter $u \in X$ and that it is summarized by a probability measure $\mu_0$ on $X$. In this case, one can draw $J$ i.i.d. samples $\psi^{(j)}$ from $\mu_0$ and fix $\mathcal A = \mathrm{span}\{\psi^{(j)} \}_{j=1}^J$.

\begin{remark}\label{rem:comp_cost} The computational cost of the EnKF method is approximately equal to the number of evaluations of the forward operator, which  in a PDE framework dominates with respect to the algebraic operations needed in the analysis step. Therefore, the complexity of the algorithm is $\mathcal{O}(JN)$. Nonetheless, let us remark that the prediction step \eqref{eq:EnKF_predict} can be easily parallelized, since the forward operator is applied independently to each particle. Hence, for a reasonable number of particles (or a high number of computing units), we have that the overall cost is of order $\mathcal O(N)$.
\end{remark}

As shown in \cite{ScS17}, a slight modification of the EnKF algorithm allows to obtain with no additional cost a Bayesian solution to \eqref{inverse_problem} from the evolving ensemble. Let $\mu_0$ be, as above, a prior probability measure on $X$ and let the initial ensemble $\{\psi^{(j)}\}_{j=1}^J$ consist of i.i.d. samples from $\mu_0$. Given a number of steps $N$, let $\Delta = 1/N$ be a ``stepsize''. Let us modify the algorithm above by taking instead of the covariance $\Gamma$ of the noise its scaled version $\Delta^{-1}\Gamma$ in formula \eqref{KalmanGain}. Moreover, let us define the empirical measure $\hat \mu_n$ on $X$ induced by the ensemble at the $n$-th step, i.e.
\begin{equation}
	\hat \mu_n(du) = \frac1J \sum_{j=1}^J \delta_{u_n^{(j)}}(du),
\end{equation}
where $\delta_x$ is the Dirac mass concentrated in $x \in U$. Then, it has been shown in \cite{ScS17} that $\hat \mu_n$ is a good approximation of the measure $\mu_n$ defined by
\begin{equation}
	\mu_n(du) = \frac{1}{Z_n} e^{- n \Delta \Phi(u;y)} \mu_0(du),
\end{equation}
where $Z_n$ is the normalization constant and $\Phi(u;y)$ is the least squares functional
\[ \Phi(u;y) = \frac{1}{2} \norm{ \Gamma^{-1/2} (y - \mathcal{G}(u)) }_2^2. \]
For $n = N$, we have by definition $N\Delta = 1$ and the measure $\mu \defeq \mu_N$ given by
\begin{equation}\label{eq:posterior}
	\mu(du) = \frac{1}{Z} e^{-\Phi(u;y)} \mu_0(du),
\end{equation}
where $Z$ is the normalization constant, is exactly the posterior measure of the parameter $u$ given the prior $\mu_0$ in the Bayesian sense (see, e.g., \cite{Stu10}). Summarizing, if one carefully modifies formula \eqref{KalmanGain} for the Kalman gain, it is sufficient to run the EnKF method for $N$ steps and the empirical measure given by the particles is an approximation to the Bayesian posterior.

\subsection{Multiscale ensemble Kalman inversion}\label{sec:MultiscaleKalman}

In this work, we consider the application of ensemble Kalman inversion to a multiscale inverse problem of the form
\begin{equation} \label{multiscale_inverse_problem}
\text{find } u \in X \text{ given observations } y = \mathcal{G}^{\varepsilon}(u) + \eta \in Y,
\end{equation}
where $\epl > 0$ is the multiscale parameter, which often is $\epl \ll 1$, the operator $\mathcal G^\epl \colon X \to Y$ is the multiscale forward map and where, as above, $\eta \sim \mathcal N(0, \Gamma)$ for some symmetric positive definite covariance $\Gamma$ on $Y$. Let $\Omega \subset \R^d$ be an open bounded domain and let $H_0^1(\Omega)$ denote the space of functions $v \colon \Omega \to \R$ in $L^2(\Omega)$ with first order weak derivatives in $L^2(\Omega)$ and whose trace on $\partial \Omega$ vanishes. We consider the forward map $\mathcal G^{\epl}$ to be the composition $\mathcal G^\epl = \mathcal O \circ \mathcal S^\epl$ of an observation operator $\mathcal O \colon H_0^1(\Omega) \to Y$ and a multiscale solution operator $\mathcal S^\epl\colon X \to H_0^1(\Omega)$. In particular, for $u \in X$, the operator $\mathcal S^\epl \colon u \mapsto p^\epl \in H_0^1(\Omega)$ where $p^\epl$ is the weak solution of the elliptic PDE
\begin{equation}
\label{intro_problem_multiscale}
\left\{
\begin{alignedat}{2}
- \nabla \cdot ( A^{\varepsilon}_u \nabla p^{\varepsilon} ) &= f, \quad && \text{ in } \Omega, \\
p^{\varepsilon} &= 0, \quad && \text{ on } \partial \Omega,
\end{alignedat}
\right.
\end{equation}
for a right-hand side $f\in L^2(\Omega)$. We assume that the tensor $A^{\varepsilon}_u \colon \Omega \to \R^{d\times d}$ is a parametrized multiscale tensor admitting explicit scale separation between slow and fast spatial variables, i.e.,
\begin{equation*}
A^{\varepsilon}_u(x) = A \left ( u(x), \frac{x}{\varepsilon} \right ),
\end{equation*}
where the map $(t,x) \mapsto A(t, x/\epl)$ is assumed to be known and where $A$ is periodic in its second argument. In other words, the unknown $u$ of the inverse problem \eqref{multiscale_inverse_problem} governs the slow-scale variations of the rapidly-oscillating tensor $A_u^\epl$. 

Let us consider now the application of ensemble Kalman inversion to the inverse problem \eqref{multiscale_inverse_problem}. Since the PDE \eqref{intro_problem_multiscale} does not in general admit a closed-form solution, one has to employ a numerical approximation to evaluate the forward map $\mathcal G^\epl$. If $\varepsilon$ is small and we employ the finite element method (FEM), a fine discretization is needed to resolve the smallest scale and thus evaluate the forward operator $\mathcal{G}^{\varepsilon}$, which clearly leads to a high computational cost. Indeed, as for Remark \ref{rem:comp_cost}, a run of the EnKF algorithm would lead to $\mathcal O(N)$ solutions of \eqref{intro_problem_multiscale}, which is indeed unfeasible.
 
In order to approach the multiscale problem more efficiently we recur to the theory of homogenization (see e.g. \cite{CiD99}), which ensures the existence of a non-oscillating homogenized tensor $A^0_u$, such that for $\epl \to 0$ the solution $p^\epl$ of \eqref{intro_problem_multiscale} tends weakly in $H_0^1(\Omega)$ to the solution $p^0$ of the problem
\begin{equation}
\label{intro_problem_homogenized}
\left\{
\begin{alignedat}{2}
- \nabla \cdot ( A^0_u \nabla p^0 ) &= f, \quad && \text{ in } \Omega, \\
p^0 &= 0, \quad && \text{ on } \partial \Omega.
\end{alignedat}
\right.
\end{equation}
Hence, this homogenized problem is a good surrogate of \eqref{intro_problem_multiscale} when $\epl \ll 1$, and its non-oscillating nature allows us to discretize it with FEM on an arbitrarily coarse mesh, whose maximum diameter is denoted by $h$. Therefore, denoting by $\mathcal{G}^0_h \colon \mathcal{O} \circ \mathcal{S}^0_h$, where $\mathcal S^0_h \colon u \mapsto p_h^0$, the numerical solution of \eqref{intro_problem_homogenized}, we study in this paper the behavior of the EnKF when $\mathcal G^\epl$ is replaced by its cheap approximation $\mathcal G^0_h$. Let us denote by $\{u_{n,h}^{0,(j)}\}_{j=1}^J$ the ensemble obtained after $n$ iterations of the EnKF algorithm with the forward operators $\mathcal G_h^0$ in the prediction step \eqref{eq:EnKF_predict}. With this notation, given an initial ensemble $\{u_{0,h}^{0, (j)}\}_{j=1}^J$, at each step $n = 0, 1, \ldots, N-1$, our algorithm proceeds as
\begin{enumerate}
	\item for each $u_{n,h}^{0,(j)}$, compute the homogenized tensor $A_{u_n^{(j)}}^0$ and build the forward map $\mathcal G_h^0$,
	\item perform the prediction step \eqref{eq:EnKF_predict} with $\mathcal G_h^0$ and the analyis step \eqref{KalmanUpdate} to obtain the updated ensemble $\{u_{n+1,h}^{0,(j)}\}_{j=1}^J$.
\end{enumerate} 
The computation of the homogenized tensor relies as well on numerical procedures, here we use the finite element heterogeneous multiscale method (FE-HMM) \cite{Abd11, AEE12}. Let us finally remark that similar analyses have been carried on in \cite{NPS12, AbD19, AbD20} for different methodologies in the solution of \eqref{multiscale_inverse_problem}.

\subsection{Statement of main results}

Let us first introduce some assumptions and notation which will be employed in the analysis. First, we introduce a regularity assumption on tensors which will be fulfilled by $A^{\varepsilon}_u$ and $A^0_u$.
\begin{assumption} \label{ass_A} The tensor $A_u \colon \Omega \to \R^{d \times d}$ satisfies for all $u, u_1, u_2 \in X$ and $\xi \in \R^d$
\begin{equation}
\norm{A_{u_1} - A_{u_2}}_{L^{\infty}(\Omega;\R^{d \times d})} \le M \norm{u_1 - u_2}_X, \qquad A_u \xi \cdot \xi \ge \alpha_0 \norm{\xi}_2^2,
\end{equation}
where $M$ and $\alpha_0$ are positive constants. 
\end{assumption}
We now introduce a regularity assumption on the observation operator.
\begin{assumption}\label{ass_O}
	The observation operator $\mathcal{O} \colon H^1_0(\Omega) \to Y$ satisfies for all $p_1, p_2 \in H^1_0(\Omega)$
	\[ \norm{\mathcal{O}(p_1) - \mathcal{O}(p_2)}_Y \le C_{\mathcal O} \norm{p_1 - p_2}_{L^2(\Omega)}, \]
	where $C_{\mathcal O}$ is a positive constant.
\end{assumption}
Note that since $\mathcal O$ is defined on $H^1_0(\Omega) \subset L^2(\Omega)$, Assumption \ref{ass_O} is stronger than Lipschitz continuity. Finally, we introduce an assumption on the algorithm which will be employed in the analysis.
\begin{assumption} \label{ass_algo} All the particles in the ensemble lie at each iteration in a ball $B_R(u^*)$ for some $R > 0$ sufficiently big, where $u^*$ is the true value of the unknown.
\end{assumption}

For clarity, we present the analysis the finite-dimensional setting $X = \R^M$ and $Y = \R^L$ but claim that it can be readily generalized to the infinite-dimensional case. For an ensemble $u = \{u^{(j)}\}_{j=1}^J$ of particles in $\R^M$, we introduce the ensemble norm
\begin{equation} \label{ensemble_norm}
	\norm{u} \coloneqq \frac{1}{J} \sum_{j=1}^J \norm{u^{(j)}}_2,
\end{equation}
which is indeed a norm and where $\norm{\cdot}_2$ is the Euclidean norm in $\R^M$. Moreover, given a scalar $\alpha$, we define the linear combination $w = u + \alpha v$ between two ensembles $u$ and $v$ with the same number of particles $J$ as $\{w^{(j)} = u^{(j)} + \alpha v^{(j)}\}_{j=1}^J$.

We can now present the first main result of this work, in which we show the convergence of the ensemble obtained by the EnKF employing $\mathcal G^0_h$ to the one obtained employing the exact operator $\mathcal G^\epl$ linked to the PDE \eqref{intro_problem_multiscale}.

\begin{theorem} \label{convergence_result_full}
	Let $u_{N,h}^0 = \{ u_{N,h}^{0,(j)} \}_{j=1}^J$, $u_N^{\varepsilon} = \{ u_N^{\varepsilon,(j)} \}_{j=1}^J$ be the ensembles after $N$ iterations of the EnKF method with forward operators $\mathcal{G}^0_h$ and $\mathcal{G}^{\varepsilon}$ respectively. Then, if $A^\epl_u$ and $A^0_u$ satisfy Assumption \ref{ass_A} and if Assumption \ref{ass_O} and Assumption \ref{ass_algo} hold, we have
	\begin{equation*}
	\mathbb{E} \left [ \norm{u_N^{\varepsilon} - u_{N,h}^0} \right ] \to 0 \qquad \text{ as } \varepsilon, h \to 0.
	\end{equation*}
	In particular, if the exact solution $p^0$ of the homogenized problem \eqref{intro_problem_homogenized} is in $H^{q+1}(\Omega)$ with $q \ge 1$ and we employ polynomials of degree $r$ for the finite element basis, then
	\begin{equation*}
	\mathbb{E} \left [ \norm{u_N^{\varepsilon} - u_{N,h}^0} \right ] \le C ( \varepsilon + h^{s+1} ),
	\end{equation*}
	where $s = \min \{ r, q \}$ and $C > 0$ is a constant independent of $h$ and $\epl$.
\end{theorem}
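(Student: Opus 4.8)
The plan is to set up a one-step error recursion between two coupled filters and close it with a discrete Gronwall argument. First I would run the two EnKF instances with identical initial ensemble $\{\psi^{(j)}\}_{j=1}^J$ and identical noise realizations $\{\eta_{n+1}^{(j)}\}$, so that $u_{0,h}^0 = u_0^\epl$ and the injected noise cancels in every difference. Eliminating the $v$-component through the prediction map $\Xi$, the analysis step \eqref{KalmanUpdate} projected onto $X$ reads, for each particle,
\begin{equation*}
u_{n+1}^{(j)} = u_n^{(j)} + D_{n+1}\bigl(y + \eta_{n+1}^{(j)} - \mathcal G(u_n^{(j)})\bigr), \qquad D_{n+1} \defeq H^\perp K_{n+1} = C_{n+1}^{uv}\bigl(C_{n+1}^{vv} + \Gamma\bigr)^{-1},
\end{equation*}
where $C_{n+1}^{uv}$ and $C_{n+1}^{vv}$ are the empirical cross- and output-covariances of the predicted ensemble $\{[u_n^{(j)}; \mathcal G(u_n^{(j)})]\}_j$. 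Subtracting the two recursions (with $\mathcal G = \mathcal G^\epl$ and $\mathcal G = \mathcal G^0_h$ respectively), writing $a \defeq y + \eta_{n+1}^{(j)} - \mathcal G(u_n^{(j)})$ and splitting
\begin{equation*}
D_{n+1}^\epl a^\epl - D_{n+1,h}^0 a^0 = D_{n+1}^\epl (a^\epl - a^0) + (D_{n+1}^\epl - D_{n+1,h}^0)\, a^0,
\end{equation*}
isolates the two mechanisms driving the error: the mismatch of the forward maps (note $a^\epl - a^0 = \mathcal G^0_h(u_{n,h}^{0,(j)}) - \mathcal G^\epl(u_n^{\epl,(j)})$, in which the noise cancels) and the mismatch of the gains.

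Second, I would bound the forward-map error uniformly on the ball $B_R(u^*)$ of Assumption \ref{ass_algo}. Writing $\mathcal G^\epl - \mathcal G^0_h = \mathcal O\circ(\mathcal S^\epl - \mathcal S^0) + \mathcal O\circ(\mathcal S^0 - \mathcal S^0_h)$ and invoking the $L^2$-Lipschitz bound of Assumption \ref{ass_O}, the task reduces to estimating $\norm{p^\epl - p^0}_{L^2(\Omega)}$ and $\norm{p^0 - p^0_h}_{L^2(\Omega)}$. The first is the homogenization error: the compact embedding $H_0^1(\Omega)\hookrightarrow L^2(\Omega)$ turns the weak convergence $p^\epl \toweak p^0$ into strong $L^2$-convergence, which yields the qualitative statement, while classical corrector estimates give the rate $\norm{p^\epl - p^0}_{L^2(\Omega)} \le C\epl$. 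The second is the standard $L^2$ finite element estimate $\norm{p^0 - p^0_h}_{L^2(\Omega)} \le C h^{s+1}\abs{p^0}_{H^{s+1}(\Omega)}$ with $s = \min\{r,q\}$, obtained by Aubin--Nitsche duality under $p^0 \in H^{q+1}(\Omega)$. Assumption \ref{ass_A} (uniform ellipticity and the $L^\infty$-Lipschitz dependence on $u$) makes all constants uniform over $u \in B_R(u^*)$, so that $\sup_{u\in B_R(u^*)} \norm{\mathcal G^\epl(u) - \mathcal G^0_h(u)}_Y \le C(\epl + h^{s+1})$.

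The main obstacle is controlling the gain difference $D_{n+1}^\epl - D_{n+1,h}^0$, since $D$ depends nonlinearly on the ensemble through the empirical covariances. Here Assumption \ref{ass_algo} is essential: boundedness of the particles in $B_R(u^*)$, together with the $L^2$-Lipschitz and ellipticity bounds, gives uniform bounds on $C^{uv}$, $C^{vv}$, and hence on $\norm{D_{n+1}}$. The positive definiteness of $\Gamma$ guarantees $C^{vv} + \Gamma \succeq \Gamma \succ 0$, so the map $C^{vv}\mapsto (C^{vv}+\Gamma)^{-1}$ is Lipschitz on the relevant bounded set with a uniformly bounded inverse; consequently $\norm{D_{n+1}^\epl - D_{n+1,h}^0}$ is Lipschitz-controlled by $\norm{C_{n+1}^{\epl} - C_{n+1,h}^{0}}$, which in turn is bounded by the ensemble difference $\norm{u_n^\epl - u_{n,h}^0}$ plus the uniform forward-map error, again using Lipschitz continuity of $\mathcal G^0_h$ (from Assumptions \ref{ass_A}--\ref{ass_O}). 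A useful structural fact is that $D_{n+1}$ is a function of the step-$n$ ensemble alone, hence independent of the fresh perturbation $\eta_{n+1}^{(j)}$; thus in $(D_{n+1}^\epl - D_{n+1,h}^0)\,a^0$ the unbounded noise part of $a^0$ factorizes and is absorbed into the constants through $\E\norm{\eta_{n+1}^{(j)}} < \infty$, while the deterministic part of $a^0$ is bounded by Assumption \ref{ass_algo}.

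Putting the pieces together and taking expectations I obtain a recursion of the form
\begin{equation*}
e_{n+1} \le (1 + C_1)\, e_n + C_2\,(\epl + h^{s+1}), \qquad e_n \defeq \E\bigl[\norm{u_n^\epl - u_{n,h}^0}\bigr],
\end{equation*}
with $C_1, C_2$ independent of $\epl$ and $h$ (but depending on $N$, $J$, $R$, $\Gamma$ and the constants of Assumptions \ref{ass_A}--\ref{ass_O}). Since the coupling forces $e_0 = 0$ and $N$ is fixed, the discrete Gronwall inequality gives $e_N \le C_2\,\tfrac{(1+C_1)^N - 1}{C_1}\,(\epl + h^{s+1}) = C(\epl + h^{s+1})$, the claimed rate; the qualitative convergence $e_N \to 0$ follows identically using only the strong $L^2$-convergence from homogenization in place of the explicit rate. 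The technical point to keep in view is that the particles are themselves random and $(\epl,h)$-dependent, so it is precisely the uniform-over-$B_R(u^*)$ forward-map bound granted by Assumption \ref{ass_algo}, rather than a pointwise homogenization limit, that makes the expectation argument legitimate.
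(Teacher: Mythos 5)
Your proposal is correct and follows essentially the same route as the paper: a one-step error recursion closed by a discrete Gronwall argument, driven by the Lipschitz continuity of the forward maps and of the empirical covariances (hence of the Kalman gain) on the ball of Assumption \ref{ass_algo}, combined with the homogenization estimate $\norm{p^\epl - p^0}_{L^2(\Omega)} \le C\epl$ and the Aubin--Nitsche FEM estimate of order $h^{s+1}$. The only organizational difference is that the paper introduces the intermediate ensemble driven by the exact homogenized operator $\mathcal G^0$ and applies the triangle inequality at the level of the final ensembles (Propositions \ref{convergence_result} and \ref{convergence_result_h}), whereas you couple the $\mathcal G^\epl$- and $\mathcal G^0_h$-filters directly and push the triangle inequality into the forward-map error; the two are equivalent.
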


The proof of this result is the main focus of Section \ref{sec:Conv_Point}. The second main theoretical result concerns the Bayesian interpretation of the EnKF methodology for inverse problems in the multiscale setting. Let $\mu_0$ be a prior measure on $X$ and the ensembles $u_{N,h}^0 = \{ u_{N,h}^{0,(j)} \}_{j=1}^J$, $u_N^{\varepsilon} = \{ u_N^{\varepsilon,(j)} \}_{j=1}^J$ resulting from the EnKF algorithms as in Theorem \ref{convergence_result_full} both initialized with an i.i.d. sample from $\mu_0$. We consider the discrete probability measures
\begin{equation}
\label{eq:ms_hom_posteriors}
\mu^{\varepsilon} = \frac{1}{J} \sum_{j=1}^J \delta_{u_N^{\varepsilon,(j)}} \qquad \text{ and } \qquad \mu_h^{0} = \frac{1}{J} \sum_{j=1}^J \delta_{u_{N,h}^{0,(j)}},
\end{equation}
i.e., the  EnKF approximations of the posterior $\mu$ on $u$ defined in \eqref{eq:posterior}. Our goal is providing a measure on how far the two measures are from each other with respect to $\epl$ and $h$. Let us remark that due to the randomization of the data at each step of the EnKF algorithm, both $\mu^\epl$ and $\mu_h^{0}$ are random probability measures. We now introduce the metric we consider for comparing the two measures.
\begin{definition}
	\label{weak_convergence_L1_distribution}
	Let $(\Omega, \mathcal{A}, P)$ be a probability space. A sequence of random measures $\{ \mu_n \}_{n \in \mathbb{N}}$ on a metric space $(E, \mathcal B(E))$ dependent on a random variable $\xi$ on $(\Omega, \mathcal{A}, P)$ is said to weakly converge in $L^1(\Omega)$ to a random measure $\mu$ on the same metric space if for all bounded continuous functions $f \in C^0_B(E)$ we have
	\[ \mathbb{E}_{\xi} \left [ \left |\int_E f \, d \mu_n - \int_E f \, d \mu  \right | \right ] \to 0. \]
	In this case we write $\mu_n \xrightharpoonup{L^1} \mu$.
\end{definition}

We can now state our second main result, whose proof is the main focus of Section \ref{sec:conv_Bayes}.
\begin{theorem}
	\label{convergence_posterior_distributions}
	Let the hypotheses of Theorem \ref{convergence_result_full} be satisfied. Then the sequence of random measures $\{\mu^\epl-\mu_h^0\}_{\epl, h}$, where $\mu^\epl$ and $\mu_h^0$ are defined in \eqref{eq:ms_hom_posteriors}, satisfies
	\[ \{\mu^{\varepsilon} - \mu_h^0\}_{\epl,h} \xrightharpoonup{L^1} 0 \qquad \text{ as } \varepsilon, h \to 0. \]
\end{theorem}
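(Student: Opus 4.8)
The plan is to unfold the definition of weak $L^1$ convergence and reduce it, via the empirical structure of the two measures, to the ensemble estimate already furnished by Theorem \ref{convergence_result_full}. Fix a bounded continuous function $f \in C^0_B(\R^M)$. Since both measures in \eqref{eq:ms_hom_posteriors} are empirical measures supported on the final ensembles, we have
\[ \int_{\R^M} f\, d\mu^\epl - \int_{\R^M} f\, d\mu_h^0 = \frac{1}{J}\sum_{j=1}^J \left( f(u_N^{\epl,(j)}) - f(u_{N,h}^{0,(j)}) \right), \]
and the two ensembles are coupled on a common probability space, being initialized with the same i.i.d. sample from $\mu_0$ and driven by the same randomized data. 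This coupling pairs the particles by their index $j$, so by the triangle inequality it suffices to control $\mathbb{E}_\xi[|f(u_N^{\epl,(j)}) - f(u_{N,h}^{0,(j)})|]$ for each $j$ separately.

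The obstacle is that $f$ is merely bounded and continuous, not Lipschitz, so we cannot directly compare $|f(a)-f(b)|$ with $\norm{a-b}_2$. Here Assumption \ref{ass_algo} is essential: all particles lie in the closed ball $B_R(u^*)$, which is compact in $\R^M$, and a continuous function is uniformly continuous on a compact set. Denoting by $\omega_f$ the modulus of continuity of $f$ restricted to $B_R(u^*)$, for any $\delta > 0$ I would split each expectation according to whether $\norm{u_N^{\epl,(j)} - u_{N,h}^{0,(j)}}_2 \le \delta$ or not. On the first event uniform continuity gives $|f(u_N^{\epl,(j)}) - f(u_{N,h}^{0,(j)})| \le \omega_f(\delta)$, while on the second event boundedness gives the crude bound $2\norm{f}_{\infty}$, whose probability is controlled by Markov's inequality, $\Pr(\norm{u_N^{\epl,(j)} - u_{N,h}^{0,(j)}}_2 > \delta) \le \delta^{-1}\mathbb{E}_\xi[\norm{u_N^{\epl,(j)} - u_{N,h}^{0,(j)}}_2]$.

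Combining these estimates, averaging over $j$, and recalling the definition \eqref{ensemble_norm} of the ensemble norm yields
\[ \mathbb{E}_\xi \left[ \left| \int_{\R^M} f\, d\mu^\epl - \int_{\R^M} f\, d\mu_h^0 \right| \right] \le \omega_f(\delta) + \frac{2\norm{f}_{\infty}}{\delta}\, \mathbb{E}_\xi\left[ \norm{u_N^\epl - u_{N,h}^0} \right]. \]
Now Theorem \ref{convergence_result_full} forces the second term to vanish as $\epl, h \to 0$ for every fixed $\delta$, so taking the limit superior leaves only $\omega_f(\delta)$ on the right-hand side; letting $\delta \to 0$ and invoking uniform continuity ($\omega_f(\delta) \to 0$) closes the argument. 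Since $f \in C^0_B(\R^M)$ was arbitrary, this is precisely the weak $L^1$ convergence of $\mu^\epl - \mu_h^0$ to the zero measure in the sense of Definition \ref{weak_convergence_L1_distribution}.

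The only genuine difficulty is the passage from the $L^1$ ensemble bound to convergence of $\int f\, d\mu$ for non-Lipschitz test functions, which the compactness from Assumption \ref{ass_algo} together with the $\delta$-splitting resolves cleanly; the remaining steps are bookkeeping. One should also keep track of the fact that the expectation $\mathbb{E}_\xi$ is taken over the shared randomness (initial sample and data perturbations) that drives both filters, which is exactly the randomness over which Theorem \ref{convergence_result_full} provides its bound, so no additional probabilistic input beyond that theorem is needed.
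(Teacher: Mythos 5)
Your argument is correct, but it takes a genuinely different route from the paper. The paper factors the claim through optimal transport: Lemma \ref{Wasserstein_ensemble_norm} bounds $W_{1,2}(\mu^{\epl},\mu_h^0)$ by the ensemble norm $\norm{u_N^{\epl}-u_{N,h}^0}$ via the index coupling $\gamma^*$, Theorem \ref{convergence_result_full} then gives $\mathbb{E}[W_{1,2}(\mu^{\epl},\mu_h^0)]\to 0$, and Lemma \ref{equivalence_convergence_DW1} (Kantorovich duality for Lipschitz test functions, extended to $C^0_B$ by density) converts this into weak $L^1$ convergence. You instead prove the weak $L^1$ statement directly: the same index pairing of the particles, uniform continuity of $f$ on the compact ball $B_R(u^*)$ supplied by Assumption \ref{ass_algo}, and a $\delta$-splitting with Markov's inequality yield $\mathbb{E}_\xi\bigl[\bigl|\int f\,d\mu^{\epl}-\int f\,d\mu_h^0\bigr|\bigr]\le \omega_f(\delta)+2\norm{f}_\infty\delta^{-1}\,\mathbb{E}_\xi[\norm{u_N^{\epl}-u_{N,h}^0}]$, after which the two limits are taken in order. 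Your approach is more elementary and avoids both auxiliary lemmas; what it gives up is the quantitative content, since the paper's route actually establishes the stronger bound $\mathbb{E}[W_{1,2}(\mu^{\epl},\mu_h^0)]\le C(\epl+h^{s+1})$, a rate in Wasserstein distance that your $\omega_f(\delta)$ term necessarily loses for general bounded continuous $f$. One small point you omit and the paper addresses explicitly: in the Bayesian variant of the algorithm the noise covariance $\Gamma$ in the Kalman gain is replaced by $\Delta^{-1}\Gamma$ with $\Delta=1/N$, so one must check that the constants of Theorem \ref{convergence_result_full}, which depend on $\norm{\Gamma^{-1}}_2$, remain controlled; this follows from $\norm{(\Delta^{-1}\Gamma)^{-1}}_2=\Delta\norm{\Gamma^{-1}}_2\le\norm{\Gamma^{-1}}_2$, and your proof should record this before invoking the ensemble estimate.
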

\begin{remark} It is possible to verify that in both Theorem \ref{convergence_result_full} and Theorem \ref{convergence_posterior_distributions} the limits with respect to $\epl$ and $h$ can be interchanged.
\end{remark}

\section{Convergence analysis}\label{Convergence}

In this section we prove Theorem \ref{convergence_result_full} and Theorem \ref{convergence_posterior_distributions}, the main results if this work. As announced above, the analysis is carried out in the finite dimensional case $X = \R^M$ and $Y = \R^L$, but it can be generalized to the infinite dimensional setting. For the purpose of the analysis, we introduce on top of the forward maps $\mathcal G^\epl$ and $\mathcal G^0_h$, which have been introduced in Section \ref{sec:MultiscaleKalman}, the operator $\mathcal G^0 = \mathcal O \circ \mathcal S^0$, where $\mathcal S^0\colon X \to H^1_0(\Omega)$ is the exact solution operator associated with the homogenized PDE \eqref{intro_problem_homogenized}.

\subsection{Convergence of the point estimate}\label{sec:Conv_Point}

We now focus on Theorem \ref{convergence_result_full}. It is clear from the desired bound that the effects of homogenization and discretization can be analysed separately. In particular, we first show the convergence of the ensemble generated employing the forward operator $\mathcal G^\epl$ to the one generated employing the exact homogenized operator $\mathcal G^0$ for $\epl \to 0$. Then, in an analogous fashion, we prove the convergence of the ensemble generated with $\mathcal G^0_h$ to the ensemble generated employing $\mathcal G^0$. In order to introduce a compact notation, we denote by $\mathcal U_{J,M}$ the set of ensembles of dimension $J$ with elements in $\R^M$ and we consider the homogenization error function $e \colon \R \times \mathcal U_{J,M} \to \R$, which is defined for a generic ensemble $u$ as
\begin{equation} \label{e}
	e(\varepsilon, u) = \frac{1}{J} \sum_{j=1}^J \norm{\mathcal{G}^{\varepsilon}(u^{(j)}) - \mathcal{G}^0(u^{(j)})}_2,
\end{equation}
and a discretization error function $\tilde{e} \colon \R \times \mathcal U_{J,M} \to \R$ as
\begin{equation} \label{e_tilde}
	\tilde e(h, u) = \frac{1}{J} \sum_{j=1}^J \norm{\mathcal{G}^0_h(u^{(j)}) - \mathcal{G}^0(u^{(j)})}_2.
\end{equation}
Before proving the main theorem, we introduce some preliminary results.

Let us first consider a generic forward operator involving an elliptic PDE and show that the associated forward map is Lipschitz continuous.
\begin{lemma} \label{G_lipschitz} Let $\mathcal{G} \colon \mathbb{R}^M \to \mathbb{R}^L$, $\mathcal{G} = \mathcal{O} \circ \mathcal{S}$ be a forward operator such that $\mathcal{O} \colon H^1_0(\Omega) \to \mathbb{R}^L$ is Lipschitz and $\mathcal{S} \colon \mathbb{R}^M \to H^1_0(\Omega)$, $\mathcal S\colon u \mapsto p$ is defined by the solution of
	\begin{equation} \label{problem_lemma}
	\left\{
	\begin{alignedat}{2}
	- \nabla \cdot ( {A_u \nabla p} ) &= f,  &&\quad \text{ in } \Omega, \\
	p &= 0, &&\quad \text{ on } \partial \Omega,
	\end{alignedat}
	\right.
	\end{equation}
	where $\Omega \subset \R^d$ is an open bounded set, the right-hand side $f \in L^2(\Omega)$ and the tensor $A_u$ satisfies Assumption \ref{ass_A}. Then $\mathcal G$ is Lipschitz with a constant depending only on the Poincaré constant of $\Omega$, on the constants $M$ and $\alpha$ appearing in Assumption \ref{ass_A}, on the right-hand side $f$ and on the Lipschitz constant of the operator $\mathcal O$.
\end{lemma}
The proof of Lemma \ref{G_lipschitz} is given in the Appendix. In the following Lemma, whose proof is also given in the Appendix, we consider the homogenization error defined in \eqref{e} and shows that it vanishes in the limit $\epl \to 0$.

\begin{lemma} \label{f_goes_to_0} Let $e$ be defined as \eqref{e}. Under Assumption \ref{ass_O}, we have for all $u \in \mathcal U_{J,M}$
\[ e(\varepsilon, u) \to 0 \quad \text{ as } \varepsilon \to 0. \]
Moreover, if the solution of the homogenized problem \eqref{intro_problem_homogenized} is in $H^2(\Omega)$ independently of $u$, then there exists $K > 0$ independent of $\epl$ and $u$ such that
\[ e(\varepsilon, u) \le K \varepsilon. \]
\end{lemma}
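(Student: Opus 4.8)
The plan is to reduce the homogenization error on the forward map to the $L^2$ distance between the multiscale and homogenized PDE solutions, and then invoke classical homogenization theory. Since $\mathcal G^\epl = \mathcal O \circ \mathcal S^\epl$ and $\mathcal G^0 = \mathcal O \circ \mathcal S^0$ share the same observation operator $\mathcal O$, I would first apply Assumption \ref{ass_O} termwise to obtain, for each particle $u^{(j)}$,
\[ \norm{\mathcal G^\epl(u^{(j)}) - \mathcal G^0(u^{(j)})}_2 \le C_{\mathcal O} \norm{p^\epl_{u^{(j)}} - p^0_{u^{(j)}}}_{L^2(\Omega)}, \]
where $p^\epl_{u^{(j)}} = \mathcal S^\epl(u^{(j)})$ and $p^0_{u^{(j)}} = \mathcal S^0(u^{(j)})$ solve \eqref{intro_problem_multiscale} and \eqref{intro_problem_homogenized} respectively. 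Averaging over $j = 1, \ldots, J$ then yields $e(\epl, u) \le C_{\mathcal O} \frac1J \sum_{j=1}^J \norm{p^\epl_{u^{(j)}} - p^0_{u^{(j)}}}_{L^2(\Omega)}$, so it suffices to control the $L^2$ homogenization error for each of the finitely many particles.

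For the qualitative statement, I would fix a particle and recall the fundamental convergence result of periodic homogenization (see \cite{CiD99}). The uniform coercivity in Assumption \ref{ass_A} provides an energy estimate bounding $p^\epl_{u^{(j)}}$ in $H^1_0(\Omega)$ uniformly in $\epl$, and the multiscale solution converges weakly in $H^1_0(\Omega)$ to the homogenized solution $p^0_{u^{(j)}}$ as $\epl \to 0$. Since $\Omega$ is bounded, the Rellich--Kondrachov theorem guarantees that the embedding $H^1_0(\Omega) \hookrightarrow L^2(\Omega)$ is compact, so the weak $H^1_0$ convergence upgrades to strong convergence in $L^2(\Omega)$. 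Hence each term $\norm{p^\epl_{u^{(j)}} - p^0_{u^{(j)}}}_{L^2(\Omega)} \to 0$, and because the ensemble contains only $J$ particles the finite average vanishes as well, giving $e(\epl, u) \to 0$.

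For the quantitative bound I would invoke the standard $L^2$ error estimate for periodic homogenization under the additional regularity $p^0 \in H^2(\Omega)$. Using the first-order corrector one obtains $\norm{p^\epl_{u^{(j)}} - p^0_{u^{(j)}}}_{L^2(\Omega)} \le C \epl$, with $C$ depending only on the ellipticity and boundedness constants $\alpha_0, M$ of Assumption \ref{ass_A}, the domain $\Omega$, the right-hand side $f$, and the $H^2$ norm of $p^0_{u^{(j)}}$. The hypothesis that $p^0 \in H^2(\Omega)$ uniformly in $u$ ensures that $\norm{p^0_{u^{(j)}}}_{H^2(\Omega)}$ is bounded independently of the particle, so this constant is uniform; averaging then gives $e(\epl, u) \le K \epl$ with $K = C_{\mathcal O} C$ independent of $\epl$ and $u$.

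I expect the main obstacle to be making the constant $K$ genuinely uniform in $u$: the classical $L^2$ estimate is stated for a fixed coefficient tensor, and one must verify that the implied constant depends on $u$ only through quantities controlled uniformly by Assumption \ref{ass_A} together with the uniform $H^2$ regularity hypothesis. The remaining steps are a routine bookkeeping of standard homogenization facts.
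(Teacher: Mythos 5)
Your proposal follows essentially the same route as the paper's proof: reduce to the $L^2$ distance of the PDE solutions via Assumption \ref{ass_O}, upgrade the weak $H^1_0$ convergence from homogenization theory to strong $L^2$ convergence by compactness, average over the finitely many particles, and invoke the first-order corrector estimate (the paper cites \cite{MoV97}) for the rate $\mathcal O(\epl)$ under $H^2$ regularity. Your remark about the uniformity of the constant in $u$ is a fair point that the paper glosses over, but it does not change the argument.
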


Finally, we consider the particle empirical covariances of ensembles given by the EnKF algorithm, thus proving their boundedness and Lipschitz continuity. The proof of this Lemma can be found in the Appendix.

\begin{lemma} \label{covariance_bound} Let $C^{up}(u) \in \R^{M\times L}$ and $C^{pp}(u) \in \R^{L \times L}$ be defined as
\begin{equation*}
	C^{up}(u) = \; \frac{1}{J} \sum_{j=1}^J \big(u^{(j)} - \bar{u}\big) \big(\mathcal{G}(u^{(j)}) - \bar{\mathcal{G}}\big)^T,  \qquad C^{pp}(u) = \; \frac{1}{J} \sum_{j=1}^J \big(\mathcal{G}(u^{(j)}) - \bar{\mathcal{G}}\big) \big(\mathcal{G}(u^{(j)}) - \bar{\mathcal{G}}\big)^T,
\end{equation*}
where $\bar u \in \R^M $ and $\bar{\mathcal G} \in \R^L$ are the empirical averages
\begin{equation*}
	\bar{u} = \; \frac{1}{J} \sum_{j=1}^J u^{(j)}, \qquad \bar{\mathcal{G}} = \; \frac{1}{J} \sum_{j=1}^J \mathcal{G}(u^{(j)}),
\end{equation*}
and let $\mathcal G \colon \R^M \to \R^L$ be Lipschitz with constant $C_{\mathcal G}$. Then, there exist four constants $C_i > 0$, $i = 1, \ldots, 4$, such that
\begin{multicols}{2}
\begin{enumerate}
\item $\norm{C^{up}(u)}_2 \le C_1$,
\item $\norm{C^{pp}(u)}_2 \le C_2$,
\item $\norm{C^{up}(u_1) - C^{up}(u_2)}_2 \le C_3 \norm{u_1 - u_2}$,
\item $\norm{C^{pp}(u_1) - C^{pp}(u_2)}_2 \le C_4 \norm{u_1 - u_2}$,
\end{enumerate}
\end{multicols}
for all ensembles $u, u_1, u_2 \in \mathcal U_{J,M}$ which are stable in the sense of Assumption \ref{ass_algo}.
\end{lemma}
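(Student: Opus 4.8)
The plan is to derive all four bounds from just two ingredients: the uniform boundedness of the particles granted by Assumption \ref{ass_algo}, and the Lipschitz continuity of $\mathcal G$ established in Lemma \ref{G_lipschitz} with constant $C_{\mathcal G}$. First I would record two preliminary estimates that will be reused throughout. Since every particle lies in $B_R(u^*)$ and the empirical mean $\bar u$ is a convex combination of the particles, $\bar u$ lies in the same ball, so $\norm{u^{(j)} - \bar u}_2 \le 2R$ for every $j$. Likewise, writing $\bar{\mathcal G}$ as the average of the $\mathcal G(u^{(k)})$ and applying the Lipschitz bound pairwise gives $\norm{\mathcal G(u^{(j)}) - \bar{\mathcal G}}_2 \le \frac1J \sum_k \norm{\mathcal G(u^{(j)}) - \mathcal G(u^{(k)})}_2 \le 2 R C_{\mathcal G}$ for every $j$.

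Parts (i) and (ii) then follow at once. Writing $C^{up}(u)$ and $C^{pp}(u)$ as averages of rank-one matrices and using that the norm of $ab^T$ equals $\norm{a}_2\norm{b}_2$, the triangle inequality gives $\norm{C^{up}(u)}_2 \le \frac1J\sum_j \norm{u^{(j)}-\bar u}_2\, \norm{\mathcal G(u^{(j)})-\bar{\mathcal G}}_2 \le 4R^2 C_{\mathcal G}$, and similarly $\norm{C^{pp}(u)}_2 \le 4R^2 C_{\mathcal G}^2$, yielding $C_1$ and $C_2$.

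For the Lipschitz estimates (iii) and (iv), I would subtract the two covariances term by term and apply the product-rule identity $a_1 b_1^T - a_2 b_2^T = a_1 (b_1 - b_2)^T + (a_1 - a_2) b_2^T$ to each summand, with $a_k^{(j)} = u_k^{(j)} - \bar u_k$ the centred parameter and $b_k^{(j)} = \mathcal G(u_k^{(j)}) - \bar{\mathcal G}_k$ the centred observation. The bounded factors are controlled by the two preliminary estimates, so everything reduces to bounding the averaged increments $\frac1J\sum_j \norm{a_1^{(j)} - a_2^{(j)}}_2$ and $\frac1J\sum_j \norm{b_1^{(j)} - b_2^{(j)}}_2$. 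The crux of the argument is that the averaging built into the ensemble norm \eqref{ensemble_norm} matches the averaging in the definition of the covariances exactly: $\frac1J\sum_j \norm{a_1^{(j)}-a_2^{(j)}}_2 \le \frac1J\sum_j \norm{u_1^{(j)}-u_2^{(j)}}_2 + \norm{\bar u_1 - \bar u_2}_2 \le 2\norm{u_1 - u_2}$, since both terms are bounded by the ensemble norm, and the analogous computation using Lipschitz continuity gives $\frac1J\sum_j \norm{b_1^{(j)}-b_2^{(j)}}_2 \le 2 C_{\mathcal G}\norm{u_1-u_2}$. Combining the bounded and incremental factors then delivers $C_3 = 8 R C_{\mathcal G}$ and $C_4 = 8 R C_{\mathcal G}^2$.

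The only genuine subtlety is the bookkeeping in the Lipschitz step: because both centred quantities depend on the ensemble through their means, one must check that mean-subtraction does not destroy the Lipschitz property. This is precisely where the ensemble norm pays off, since $\norm{\bar u_1 - \bar u_2}_2 \le \norm{u_1-u_2}$ and $\norm{\bar{\mathcal G}_1 - \bar{\mathcal G}_2}_2 \le C_{\mathcal G}\norm{u_1-u_2}$ follow directly from the averaging structure and Jensen's inequality. No regularity beyond Assumption \ref{ass_algo} and Lemma \ref{G_lipschitz} is required.
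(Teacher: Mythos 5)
Your proof is correct and follows essentially the same route as the paper's: triangle inequality on the rank-one summands, boundedness of the particles from Assumption \ref{ass_algo}, Lipschitz continuity of $\mathcal G$, and the product-rule splitting $a_1b_1^T - a_2b_2^T = a_1(b_1-b_2)^T + (a_1-a_2)b_2^T$ for parts (iii) and (iv). Your bookkeeping (centring at $u^*$ and keeping the $\tfrac1J$-average attached to the particle increments) is in fact slightly tidier and yields constants independent of $J$, whereas the paper's $C_3$ and $C_4$ carry a factor $(J+1)$; since $J$ is fixed, this makes no difference to how the lemma is used.
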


In order to clarify the exposition, we first consider the amplification the error over one step between the EnKF algorithms employing the multiscale and the homogenized forward operators respectively, which is summarized in the following lemma.
\begin{lemma} \label{convergence_local} For all $n=0,\dots,N-1$, let $u_{n}^0 = \{ u_{n}^{0,(j)} \}_{j=1}^J$,$u_n^{\varepsilon} = \{ u_n^{\varepsilon,(j)} \}_{j=1}^J$ be the ensembles of particles at the $n$-th iteration of the EnKF for the forward operators $\mathcal{G}^0$ and $\mathcal{G}^{\varepsilon}$ respectively. Then, under Assumption \ref{ass_A}, Assumption \ref{ass_O} and Assumption \ref{ass_algo}, there exist positive constants $\alpha$ and $\gamma$ such that 
\begin{equation}
\mathbb{E} \left [ \norm{u_{n+1}^{\varepsilon} - u_{n+1}^0} \right ] \le \alpha \mathbb{E} \left [ \norm{u_n^{\varepsilon} - u_n^0} \right ] + \gamma \mathbb{E} \left [ e(\varepsilon, u_n^0) \right ],
\end{equation}
where $e(\epl, u)$ is given in \eqref{e}.
\end{lemma}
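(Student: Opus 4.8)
The plan is to write out the EnKF update for the $u$-component alone and then compare the two algorithms step by step under a common coupling of the randomized data. Since the prediction step gives $\hat z_{n+1}^{(j)} = [u_n^{(j)};\, \mathcal G(u_n^{(j)})]$ and $H = [0\ I]$, the analysis step \eqref{KalmanUpdate} projected onto $X$ reads
\begin{equation*}
u_{n+1}^{(j)} = u_n^{(j)} + C^{up}(u_n)\bigl(C^{pp}(u_n) + \Gamma\bigr)^{-1}\bigl(y_{n+1}^{(j)} - \mathcal G(u_n^{(j)})\bigr),
\end{equation*}
with $C^{up},C^{pp}$ as in Lemma \ref{covariance_bound}. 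I would run both algorithms with the same noise realizations $\eta_{n+1}^{(j)}$, so that the perturbed data $y_{n+1}^{(j)} = y + \eta_{n+1}^{(j)}$ coincide, and subtract the two updates. Writing $K^\bullet \defeq C^{up}(u_n^\bullet)(C^{pp}(u_n^\bullet)+\Gamma)^{-1}$ and $r^{\bullet,(j)} \defeq y_{n+1}^{(j)} - \mathcal G^\bullet(u_n^{\bullet,(j)})$ for $\bullet \in \{\epl,0\}$, the difference of the analysis steps telescopes as
\begin{equation*}
u_{n+1}^{\epl,(j)} - u_{n+1}^{0,(j)} = \bigl(u_n^{\epl,(j)} - u_n^{0,(j)}\bigr) + K^\epl\bigl(r^{\epl,(j)} - r^{0,(j)}\bigr) + \bigl(K^\epl - K^0\bigr)r^{0,(j)}.
\end{equation*}
The first term produces the factor multiplying $\E[\norm{u_n^\epl - u_n^0}]$; it remains to control the two Kalman correction terms.

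For the second term I would use that $K^\epl$ is bounded: by Lemma \ref{covariance_bound} one has $\norm{C^{up,\epl}}_2 \le C_1$, and since $C^{pp}$ is positive semidefinite, $C^{pp,\epl}+\Gamma \succeq \Gamma$ gives $\norm{(C^{pp,\epl}+\Gamma)^{-1}}_2 \le \norm{\Gamma^{-1}}_2$. The residual difference splits as $r^{\epl,(j)} - r^{0,(j)} = \bigl(\mathcal G^0(u_n^{0,(j)}) - \mathcal G^\epl(u_n^{0,(j)})\bigr) + \bigl(\mathcal G^\epl(u_n^{0,(j)}) - \mathcal G^\epl(u_n^{\epl,(j)})\bigr)$, where, after averaging over $j$, the first bracket is exactly $e(\epl,u_n^0)$ as defined in \eqref{e} and the second is bounded by the Lipschitz constant of $\mathcal G^\epl$ from Lemma \ref{G_lipschitz} times $\norm{u_n^\epl - u_n^0}$.

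For the third term I would interpolate the Kalman gains by inserting the intermediate object obtained from the multiscale map evaluated on the homogenized ensemble, and apply the resolvent identity $A^{-1} - B^{-1} = A^{-1}(B-A)B^{-1}$ to the inverses. The Lipschitz bounds (items 3--4 of Lemma \ref{covariance_bound}) control the ensemble-difference pieces by $\norm{u_n^\epl - u_n^0}$, while the operator-difference pieces of $C^{up}$ and $C^{pp}$ evaluated on the common ensemble $u_n^0$ are bounded, using Assumption \ref{ass_algo} to keep $\norm{u^{(j)} - \bar u}_2$ controlled, by a constant times $e(\epl,u_n^0)$. This yields the pathwise estimate $\norm{K^\epl - K^0}_2 \lesssim \norm{u_n^\epl - u_n^0} + e(\epl,u_n^0)$.

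The last ingredient is taking expectations in the third term, where the residual $r^{0,(j)} = y + \eta_{n+1}^{(j)} - \mathcal G^0(u_n^{0,(j)})$ carries the fresh noise $\eta_{n+1}^{(j)}$, and I expect this to be the \emph{main obstacle}. The factor $\norm{K^\epl - K^0}_2$ is measurable with respect to the history $\mathcal F_n$ up to step $n$, whereas $\eta_{n+1}^{(j)}$ is independent of $\mathcal F_n$; conditioning on $\mathcal F_n$ and using $\E[\norm{\eta_{n+1}^{(j)}}_2] < \infty$ (finite for a Gaussian) together with the boundedness of $\norm{\mathcal G^0(u_n^{0,(j)})}_2$ on the ball of Assumption \ref{ass_algo} converts the product into $\bigl(\E[\norm{u_n^\epl - u_n^0}] + \E[e(\epl,u_n^0)]\bigr)$ times a finite constant. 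Collecting the three contributions, averaging over $j$ in the ensemble norm and taking expectations gives the claimed recursion, with $\alpha$ and $\gamma$ absorbing the Lipschitz and covariance constants, the bound $\norm{\Gamma^{-1}}_2$, and the first moment of the noise.
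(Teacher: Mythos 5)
Your proposal is correct and follows essentially the same route as the paper: the same $u$-marginal update formula, the same telescoping of the gain-times-residual difference (merely with the roles of $\epl$ and $0$ mirrored, i.e.\ $K^\epl(r^\epl-r^0)+(K^\epl-K^0)r^0$ versus the paper's $(K^\epl-K^0)r^\epl+K^0(r^\epl-r^0)$), and the same ingredients --- Lemma~\ref{covariance_bound}, the inequalities \eqref{difference_inverse}--\eqref{inverse_sum}, the splitting of the forward-map difference into $e(\epl,u_n^0)$ plus a Lipschitz term, and independence of the fresh noise from the history. The one point where you are slightly more careful than the paper is in noting that $C^{up}$ and $C^{pp}$ are built from \emph{different} forward operators, so the gain difference picks up an additional term of order $e(\epl,u_n^0)$; this is harmlessly absorbed into $\gamma$ and does not change the form of the recursion.
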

\begin{proof}
First, due to Assumption \ref{ass_O} and the Poincaré inequality with constant $C_p$ we have
\begin{equation*}
\norm{\mathcal{O}(p_1) - \mathcal{O}(p_2)}_2 \le C_{\mathcal O} \norm{p_1 - p_2}_{L^2(\Omega)} \le C_{\mathcal O} C_p \norm{\nabla p_1 - \nabla p_2}_{L^2(\Omega; \R^d)},
\end{equation*}
which shows that $\mathcal{O}$ is Lipschitz with constant $C_{\mathcal O}C_p$. Therefore, applying Lemma \ref{G_lipschitz}, we deduce that both $\mathcal{G}^0$ and $\mathcal{G}^{\varepsilon}$ are Lipschitz with constant $C_{\mathcal{G}}$ independent of $\varepsilon$. The Kalman update formulae \eqref{KalmanUpdate} restricted to the $u$ variable read (see \cite{ILS13})
\begin{align}
\label{stepn_e}
u_{n+1}^{\varepsilon,(j)} = & \; u_{n}^{\varepsilon,(j)} + C^{up}(u_n^{\varepsilon}) (C^{pp}(u_n^{\varepsilon}) + \Gamma)^{-1} (y_{n+1} - \mathcal{G}^{\varepsilon}(u_n^{\varepsilon,(j)})), \\
\label{stepn_0}
u_{n+1}^{0,(j)} = & \; u_{n}^{0,(j)} + C^{up}(u_n^{0}) (C^{pp}(u_n^{0}) + \Gamma)^{-1} (y_{n+1} - \mathcal{G}^{0}(u_n^{0,(j)})).
\end{align}
Combining \eqref{stepn_e} and \eqref{stepn_0}, we have
\begin{align*}
\mathbb{E} \left [ \norm{u_{n+1}^{\varepsilon} - u_{n+1}^0} \right ]  = \; \frac{1}{J} \sum_{j=1}^J \mathbb{E} &\left [ \left \lVert u_{n}^{\varepsilon,(j)} + C^{up}(u_n^{\varepsilon}) (C^{pp}(u_n^{\varepsilon}) + \Gamma)^{-1} (y_{n+1}^{(j)} - \mathcal{G}^{\varepsilon}(u_n^{\varepsilon,(j)})) \right. \right. \\
& \left. \left. -u_{n}^{0,(j)} - C^{up}(u_n^{0}) (C^{pp}(u_n^{0}) + \Gamma)^{-1} (y_{n+1}^{(j)} - \mathcal{G}^{0}(u_n^{0,(j)})) \right \rVert_2 \right ],
\end{align*}
and using the triangle inequality we obtain
\begin{equation}\label{decomp}
\mathbb{E} \left [ \norm{u_{n+1}^{\varepsilon} - u_{n+1}^0} \right ] \leq \E \left [ \norm{u_n^{\varepsilon} - u_n^0} \right ]  + S_1 + S_2 + S_3,
\end{equation}
where
\begin{align}
\label{2}
S_1 &= \; \frac{1}{J} \sum_{j=1}^J \mathbb{E} \left [ \norm{C^{up}(u_n^{\varepsilon}) - C^{up}(u_n^0)}_2 \norm{(C^{pp}(u_n^{\varepsilon}) + \Gamma)^{-1}}_2 \norm{y_{n+1}^{(j)} - \mathcal{G}^{\varepsilon}(u_n^{\varepsilon,(j)})}_2 \right ], \\ \label{3}
S_2 &= \; \frac{1}{J} \sum_{j=1}^J \mathbb{E} \left [ \norm{C^{up}(u_n^0)}_2 \norm{(C^{pp}(u_n^{\varepsilon}) + \Gamma)^{-1} - (C^{pp}(u_n^0) + \Gamma)^{-1}}_2 \norm{y_{n+1}^{(j)} - \mathcal{G}^{\varepsilon}(u_n^{\varepsilon,(j)})}_2 \right ], \\ \label{4}
S_3 &= \; \frac{1}{J} \sum_{j=1}^J \mathbb{E} \left [ \norm{C^{up}(u_n^0)}_2 \norm{(C^{pp}(u_n^0) + \Gamma)^{-1}}_2 \norm{\mathcal{G}^0(u_n^{0,(j)}) - \mathcal{G}^{\varepsilon}(u_n^{\varepsilon,(j)})}_2 \right ].
\end{align}
Let us introduce two useful inequalities which will be employed in the following. Given $A$ and $B$ square invertible matrices of the same size, it holds
\begin{equation}\label{difference_inverse}
\norm{A^{-1} - B^{-1}}_2 \le \norm{A^{-1}}_2 \norm{B^{-1}}_2 \norm{A - B}_2.
\end{equation} 
Moreover, if $A$ is positive semidefinite and $B$ is positive definite, it holds
\begin{equation}\label{inverse_sum}
\norm{(A + B)^{-1}}_2 \le \norm{B^{-1}}_2.
\end{equation}
Let us first consider $S_1$. Applying Lemma \ref{covariance_bound} and \eqref{inverse_sum} to the first two factors gives
\begin{equation}
	S_1 \leq \frac{C_3}{J} \sum_{j=1}^J \E\left[\norm{u_n^{\varepsilon} - u_n^0}\norm{\Gamma^{-1}}_2\norm{y_{n+1}^{(j)} - \mathcal{G}^{\varepsilon}(u_n^{\varepsilon,(j)})}_2\right].
\end{equation}
Moreover, since $y_{n+1}^{(j)} = y + \eta_{n+1}^{(j)}$ and since $y = \mathcal G^\epl(u^*) + \eta$, where $u^*$ is the true value of the unknown and $\eta$ is the true realization of the noise, the triangle inequality yields
\begin{equation}
\norm{y_{n+1}^{(j)} - \mathcal{G}^{\varepsilon}(u_n^{\varepsilon,(j)})}_2 \le \norm{\mathcal{G}^{\varepsilon}(u^*) - \mathcal{G}^{\varepsilon}(u_n^{\varepsilon,(j)})}_2 + \norm{\eta_{n+1}^{(j)} + \eta}_2,
\end{equation}
which, since $\mathcal{G}^{\varepsilon}$ is Lipschitz and due to Assumption \ref{ass_algo}, implies
\begin{equation}
\norm{y_{n+1}^{(j)} - \mathcal{G}^{\varepsilon}(u_n^{\varepsilon,(j)})}_2 \le C_{\mathcal{G}} R + \norm{\eta_{n+1}^{(j)} + \eta}_2.
\end{equation}
Hence, we get
\begin{equation*}
S_1 \leq \frac{1}{J} C_3 \norm{\Gamma^{-1}}_2 \sum_{j=1}^J \mathbb{E} \left [ \norm{u_n^{\varepsilon} - u_n^0} \left( C_{\mathcal{G}} R + \norm{\eta_{n+1}^{(j)} + \eta}_2 \right) \right ].
\end{equation*}
Finally, the random variables $\zeta_{n+1}^{(j)} \defeq \eta_{n+1}^{(j)} + \eta$ are i.i.d., distributed as $\zeta \sim \mathcal{N}(0,2\Gamma)$ and independent of $u_n^\epl$ and $u_n^0$, which implies first
\begin{equation*}
\mathbb{E}[\norm{\zeta}_2] \le \sqrt{\mathbb{E}[\norm{\zeta}_2^2]} = \sqrt{2\mathrm{tr}(\Gamma)},
\end{equation*}
and second, defining $\alpha_1 \defeq C_3 \norm{\Gamma^{-1}}_2 (C_{\mathcal{G}} R + \sqrt{2\mathrm{tr}(\Gamma)})$, yields the final bound
\begin{equation}
\label{bound2}
S_1 \leq \alpha_1 \mathbb{E} \left [ \norm{u_n^{\varepsilon} - u_n^0} \right ].
\end{equation}
Let us now consider the second term $S_2$. We apply Lemma \ref{covariance_bound} to the norm of $C^{up}(u_n^0)$. Moreover, applying the inequalities \eqref{difference_inverse}, \eqref{inverse_sum} and Lemma \ref{covariance_bound} gives
\begin{equation}
	\norm{(C^{pp}(u_n^{\varepsilon}) + \Gamma)^{-1} - (C^{pp}(u_n^0) + \Gamma)^{-1}}_2 \le C_4 \norm{\Gamma^{-1}}_2^2  \norm{u_n^{\varepsilon} - u_n^0}.
\end{equation}
Reasoning as for $S_1$ for the third factor appearing in \eqref{3} finally yields
\begin{equation}
\label{bound3}
S_2 \leq \alpha_2 \mathbb{E} \left [ \norm{u_n^{\varepsilon} - u_n^0} \right ],
\end{equation}
where $\alpha_2 \defeq C_1 C_4 \norm{\Gamma^{-1}}_2^2 (C_{\mathcal{G}} R + \sqrt{2\mathrm{tr}(\Gamma)})$. We now consider the last term $S_3$. The first factor appearing in \eqref{4} can be bounded by Lemma \ref{covariance_bound} and for the second factor we use \eqref{inverse_sum}, thus obtaining
\begin{equation*}
\norm{(C^{pp}(u_n^0) + \Gamma)^{-1}}_2 \le \norm{\Gamma^{-1}}_2.
\end{equation*}
Regarding the third factor of \eqref{4}, we apply the triangle inequality and the Lipschitz continuity of the forward operator $\mathcal{G}^{\varepsilon}$, which yield
\begin{equation}
\norm{\mathcal{G}^0(u_n^{0,(j)}) - \mathcal{G}^{\varepsilon}(u_n^{\varepsilon,(j)})}_2 \le \norm{\mathcal{G}^0(u_n^{0,(j)}) - \mathcal{G}^{\varepsilon}(u_n^{0,(j)})}_2 + C_{\mathcal{G}} \norm{u_n^{0,(j)} - u_n^{\varepsilon,(j)}}_2.
\end{equation}
Substituting back into $S_3$ and by definition of $e(\epl, u_n^0)$ and of the ensemble norm we obtain
\begin{equation*}
S_3 \leq C_1 \norm{\Gamma^{-1}}_2 \mathbb{E} \left [ e(\varepsilon, u_n^0) \right ] + C_1 \norm{\Gamma^{-1}}_2 C_{\mathcal{G}} \mathbb{E} \left [ \norm{u_n^0 - u_n^{\varepsilon}} \right ].
\end{equation*}
Therefore, defining $\alpha_3 = C_1 \norm{\Gamma^{-1}}_2 C_{\mathcal{G}}$ and $\gamma = C_1 \norm{\Gamma^{-1}}_2$ we have the bound
\begin{equation}
\label{bound4}
S_3 \leq \alpha_3 \mathbb{E} \left [ \norm{u_n^0 - u_n^{\varepsilon}} \right ] + \gamma \mathbb{E} \left [ e(\varepsilon, u_n^0) \right ].
\end{equation}
Finally, defining $\alpha \defeq 1 + \alpha_1 + \alpha_2 + \alpha_3$, and using the results \eqref{decomp}, \eqref{bound2}, \eqref{bound3} and \eqref{bound4}, we obtain the desired result.
\end{proof}

We now present the main result about global multiscale convergence of the EnKF algorithm.

\begin{proposition} \label{convergence_result}
Under the notation and assumptions of Lemma \ref{convergence_local}, letting $u_0^{\varepsilon} = u_0^0$ be the same initial ensemble, we have
\[ \mathbb{E} \left [ \norm{u_N^{\varepsilon} - u_N^0} \right ] \to 0 \qquad \text{ as } \varepsilon \to 0.  \]
Moreover, if the solution of the homogenized problem \eqref{intro_problem_homogenized} is sufficiently regular, namely $p^0 \in H^2(\Omega)$, then there exists $K_1 > 0$ independent of $\epl$ such that
\[ \mathbb{E} \left [ \norm{u_N^{\varepsilon} - u_N^0} \right ] \le K_1 \varepsilon. \]
\end{proposition}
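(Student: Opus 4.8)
The plan is to convert the one-step estimate of Lemma \ref{convergence_local} into a global bound by unrolling the recursion over the $N$ iterations. Writing $a_n \defeq \mathbb{E}\left[\norm{u_n^{\varepsilon} - u_n^0}\right]$, Lemma \ref{convergence_local} reads
\[
a_{n+1} \le \alpha\, a_n + \gamma\, \mathbb{E}\left[e(\varepsilon, u_n^0)\right], \qquad n = 0, \ldots, N-1,
\]
while the hypothesis $u_0^{\varepsilon} = u_0^0$ gives the initialization $a_0 = 0$. A straightforward induction then yields
\[
a_N \le \gamma \sum_{n=0}^{N-1} \alpha^{N-1-n}\, \mathbb{E}\left[e(\varepsilon, u_n^0)\right].
\]
Since $N$ is fixed and the weights $\alpha^{N-1-n}$ are bounded deterministic constants, the whole problem reduces to controlling the homogenization error terms $\mathbb{E}\left[e(\varepsilon, u_n^0)\right]$.

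For the qualitative statement I would show that each expectation vanishes as $\varepsilon \to 0$. By Lemma \ref{f_goes_to_0}, for every fixed realization of the homogenized ensemble $u_n^0$ we have $e(\varepsilon, u_n^0) \to 0$ pointwise; crucially, $u_n^0$ is generated by $\mathcal{G}^0$ and therefore does not depend on $\varepsilon$, so the only $\varepsilon$-dependence sits inside $e$. To upgrade this to convergence in expectation I would invoke dominated convergence: Assumption \ref{ass_algo} confines all particles to the ball $B_R(u^*)$, and the Lipschitz continuity of $\mathcal{G}^{\varepsilon}$ and $\mathcal{G}^0$ established through Lemma \ref{G_lipschitz} bounds $e(\varepsilon, u_n^0)$ uniformly in $\varepsilon$ by a deterministic constant. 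Hence each $\mathbb{E}\left[e(\varepsilon, u_n^0)\right] \to 0$, and the finite sum above tends to $0$, giving $a_N \to 0$.

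For the quantitative statement, the extra regularity $p^0 \in H^2(\Omega)$ activates the second part of Lemma \ref{f_goes_to_0}, which provides the uniform bound $e(\varepsilon, u_n^0) \le K \varepsilon$ with $K$ independent of $\varepsilon$ and of the ensemble. Substituting this into the unrolled estimate and summing the finite geometric series gives
\[
a_N \le \gamma K \varepsilon \sum_{n=0}^{N-1} \alpha^{N-1-n} = K_1 \varepsilon,
\]
where $K_1 \defeq \gamma K \sum_{n=0}^{N-1} \alpha^{N-1-n}$ is independent of $\varepsilon$ (the sum equals $(\alpha^N - 1)/(\alpha - 1)$ for $\alpha \neq 1$, and $N$ for $\alpha = 1$).

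The only genuinely nontrivial step is the exchange of limit and expectation in the qualitative part; everything else is bookkeeping with a finite geometric sum. The role of Assumption \ref{ass_algo} is precisely to supply, via the Lipschitz bounds, the uniform domination that justifies this exchange.
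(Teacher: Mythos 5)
Your proof is correct and follows essentially the same route as the paper: unroll the one-step estimate of Lemma \ref{convergence_local} into a geometric sum, then apply Lemma \ref{f_goes_to_0} for both the qualitative limit and the quantitative $\mathcal{O}(\varepsilon)$ bound, arriving at the same constant $K_1 = \gamma K (\alpha^N-1)/(\alpha-1)$. Your explicit dominated-convergence justification for passing the limit inside the expectation is a welcome extra degree of care that the paper's proof leaves implicit.
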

\begin{proof}
	Since $u_0^\epl = u_0^0$, iterating the estimate of Lemma \ref{convergence_local} yields
	\begin{equation*}
		\mathbb{E} \left [ \norm{u_N^{\varepsilon} - u_N^0} \right ] \le \gamma \sum_{i=0}^{N-1} \alpha^{N-1-i} \mathbb{E} \left [ e(\varepsilon, u^0_i) \right ].
	\end{equation*}
	Applying Lemma \ref{f_goes_to_0}, we have $e(\varepsilon, u_i^0) \to 0$ for all $i = 0, \dots, N-1$, hence as $\varepsilon \to 0$
	\begin{equation*}
		\mathbb{E} \left [ \norm{u_N^{\varepsilon} - u_N^0} \right ] \to 0.
	\end{equation*}
	Moreover, if $p^0$ belongs to $H^2(\Omega)$, applying Lemma \ref{f_goes_to_0} gives
	\begin{equation}
		\mathbb{E} \left [ \norm{u_N^{\varepsilon} - u_N^0} \right ] \le K_1 \varepsilon,
	\end{equation}
	where $K_1 = \gamma (\alpha^N - 1) K /(\alpha - 1)$, which is the desired result.
\end{proof}

We now consider convergence with respect to the FEM discretization of the homogenized problem. First, we introduce a preliminary result, which plays the role of Lemma \ref{f_goes_to_0} in the context of numerical convergence and whose proof is given in the Appendix.
\begin{lemma}
	\label{fh_goes_to_0}
	Let $\tilde{e}$ be defined in \eqref{e_tilde} and let Assumption \ref{ass_O} hold. If the exact solution $p^0$ of the homogenized problem \eqref{problem_lemma} is in $H^{q+1}(\Omega)$, the right-hand side $f$ is in $H^{q-1}(\Omega)$ and we employ polynomials of degree $r$ for the finite element basis, then
	\[ \tilde{e}(h, u) \le \tilde{K} h^{s+1}, \]
	where $s = \min \{ r, q \}$.
\end{lemma}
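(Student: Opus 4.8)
The plan is to reduce the bound on $\tilde{e}(h,u)$ to a standard finite element $L^2$-error estimate for the homogenized problem. First I would use the factorizations $\mathcal{G}^0_h = \mathcal{O} \circ \mathcal{S}^0_h$ and $\mathcal{G}^0 = \mathcal{O} \circ \mathcal{S}^0$ together with Assumption \ref{ass_O} to write, for each particle $u^{(j)}$,
\[ \norm{\mathcal{G}^0_h(u^{(j)}) - \mathcal{G}^0(u^{(j)})}_2 = \norm{\mathcal{O}(p_h^0) - \mathcal{O}(p^0)}_2 \le C_{\mathcal{O}} \norm{p_h^0 - p^0}_{L^2(\Omega)}, \]
where $p^0 = \mathcal{S}^0(u^{(j)})$ is the exact homogenized solution and $p_h^0 = \mathcal{S}^0_h(u^{(j)})$ its degree-$r$ finite element approximation. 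Averaging over $j = 1, \dots, J$ in the definition \eqref{e_tilde} then reduces the claim to controlling $\norm{p_h^0 - p^0}_{L^2(\Omega)}$ by $C h^{s+1}$ uniformly over the ensemble.

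Next I would invoke the classical a priori analysis for the finite element discretization of a coercive elliptic problem. Since $A^0_u$ is uniformly elliptic by Assumption \ref{ass_A}, Céa's lemma combined with the interpolation estimate for degree-$r$ polynomials acting on an $H^{q+1}$ function yields the energy-norm bound $\norm{p_h^0 - p^0}_{H^1(\Omega)} \le C h^{s} |p^0|_{H^{s+1}(\Omega)}$ with $s = \min\{r,q\}$, the exponent being $\min\{q+1,r+1\}-1$. The extra power of $h$ is then recovered through the Aubin–Nitsche duality argument: one introduces the dual problem with data $p_h^0 - p^0$, exploits its $H^2$ elliptic regularity (valid under the standing assumptions on $\Omega$ and $A^0_u$), and couples the resulting estimate with Galerkin orthogonality and the energy bound above to obtain
\[ \norm{p_h^0 - p^0}_{L^2(\Omega)} \le C h^{s+1} |p^0|_{H^{s+1}(\Omega)}. \]
The hypothesis $f \in H^{q-1}(\Omega)$ enters precisely here: via elliptic regularity it guarantees $p^0 \in H^{q+1}(\Omega)$ and, crucially, that $|p^0|_{H^{s+1}(\Omega)} \le C \norm{f}_{H^{q-1}(\Omega)}$ with a constant depending only on the ellipticity data $\alpha_0, M$ of Assumption \ref{ass_A} and on $\Omega$, hence independently of the particular parameter $u^{(j)}$.

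Combining the two displays and setting $\tilde{K} = C_{\mathcal{O}} C \sup_{u} |p^0(u)|_{H^{s+1}(\Omega)}$, a finite and $u$-independent quantity by the uniform regularity just discussed, gives $\tilde{e}(h,u) \le \tilde{K} h^{s+1}$ for every ensemble $u$. I expect the duality argument itself to be routine; the genuine obstacle is verifying that the constant may be taken uniform in $u$. This requires elliptic regularity bounds for the homogenized operator whose constants depend only on the structural quantities of Assumption \ref{ass_A}, so that the supremum over the admissible parameters (in particular over the ball of Assumption \ref{ass_algo}) stays bounded; one should also confirm that the homogenized tensor inherits enough spatial regularity for the $H^{q+1}$ estimate to be meaningful, which is where the interplay between the homogenization construction and the FE-HMM approximation of $A^0_u$ must be accounted for.
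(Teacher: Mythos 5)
Your proposal is correct and follows essentially the same route as the paper: factor the forward map through $\mathcal{O}$, apply Assumption \ref{ass_O} to reduce to the $L^2$ finite element error, invoke the standard a priori estimate $\norm{p_h^0 - p^0}_{L^2(\Omega)} \le C h^{s+1} \abs{p^0}_{H^{s+1}(\Omega)}$ (the paper cites Ciarlet rather than spelling out C\'ea plus Aubin--Nitsche), and control the seminorm uniformly in $u$ via elliptic regularity by $\norm{f}_{H^{q-1}(\Omega)}$. The uniformity issue you flag is handled in the paper exactly as you propose, by absorbing $\abs{p^0(u^{(j)})}_{H^{s+1}(\Omega)}$ into a constant times $\norm{f}_{H^{q-1}(\Omega)}$.
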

We can now state the main result concerning convergence with respect to the numerical discretization of the homogenized problem.
\begin{proposition}
\label{convergence_result_h}
Let $u_{N}^0 = \{ u_{N}^{0,(j)} \}_{j=1}^J$, $u_{N,h}^0 = \{ u_{N,h}^{0,(j)} \}_{j=1}^J$ be the ensembles of particles at the last iteration of the iterative ensemble Kalman filter for the forward operators $\mathcal{G}^0$ and $\mathcal{G}_h^0$ respectively. Then, under  Assumption \ref{ass_A}, Assumption \ref{ass_O}, Assumption \ref{ass_algo} and if the exact solution $p^0$ of the homogenized problem \eqref{problem_lemma} is in $H^{q+1}(\Omega)$ and we use polynomials of degree $r$ for the finite element basis, we have
\[ \mathbb{E} \left [ \norm{u_{N,h}^0 - u_N^0} \right ] \le K_2 h^{s+1}, \]
where $s = \min \{ r, q \}$ and $K_2$ is a positive constant independent of $h$.
\end{proposition}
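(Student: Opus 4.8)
The plan is to mirror exactly the two-stage argument already used for the homogenization error in Lemma \ref{convergence_local} and Proposition \ref{convergence_result}, replacing the pair $(\mathcal{G}^{\varepsilon}, \mathcal{G}^0)$ by $(\mathcal{G}^0, \mathcal{G}_h^0)$ and the homogenization error $e(\varepsilon, \cdot)$ by the discretization error $\tilde{e}(h, \cdot)$ of \eqref{e_tilde}. The preliminary step is to record that $\mathcal{G}_h^0 = \mathcal{O}\circ\mathcal{S}_h^0$ is Lipschitz with a constant $C_{\mathcal{G}}$ independent of $h$. This follows from the discrete analogue of Lemma \ref{G_lipschitz}: the discrete bilinear form inherits the coercivity constant $\alpha_0$ of Assumption \ref{ass_A} on the finite element subspace, so the energy estimate and the Lipschitz dependence of $A_u^0$ on $u$ carry over verbatim, while the Lipschitz continuity of $\mathcal{O}$ is assumed. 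With $\mathcal{G}^0$ and $\mathcal{G}_h^0$ both Lipschitz uniformly in $h$, Lemma \ref{covariance_bound} applies to the empirical covariances of both evolving ensembles.

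The core of the argument is a one-step estimate in the spirit of Lemma \ref{convergence_local}: there exist constants $\alpha, \gamma > 0$ independent of $h$ such that
\begin{equation*}
\mathbb{E}\left[\norm{u_{n+1}^0 - u_{n+1,h}^0}\right] \le \alpha\, \mathbb{E}\left[\norm{u_n^0 - u_{n,h}^0}\right] + \gamma\, \mathbb{E}\left[\tilde{e}(h, u_n^0)\right].
\end{equation*}
I would obtain this by writing the Kalman update \eqref{stepn_0} for $u_{n+1}^{0,(j)}$ and its discretized counterpart for $u_{n+1,h}^{0,(j)}$, subtracting, and decomposing the difference into the same three terms $S_1, S_2, S_3$ as in \eqref{2}--\eqref{4}. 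The bounds on $S_1$ and $S_2$ transfer unchanged, using the covariance estimates of Lemma \ref{covariance_bound}, the matrix inequalities \eqref{difference_inverse}--\eqref{inverse_sum}, and a uniform residual bound $\norm{y_{n+1}^{(j)} - \mathcal{G}_h^0(u_{n,h}^{0,(j)})}_2 \le C + \norm{\eta_{n+1}^{(j)} + \eta}_2$. The latter is obtained by inserting $\mathcal{G}^0(u^*)$ and $\mathcal{G}_h^0(u^*)$, using $y = \mathcal{G}^{\varepsilon}(u^*) + \eta$, and bounding the resulting homogenization and discretization discrepancies (controlled by Lemma \ref{f_goes_to_0} and Lemma \ref{fh_goes_to_0}) together with the ball condition of Assumption \ref{ass_algo}. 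Only $S_3$ produces the discretization error: its third factor $\norm{\mathcal{G}^0(u_n^{0,(j)}) - \mathcal{G}_h^0(u_{n,h}^{0,(j)})}_2$ is split, by inserting $\mathcal{G}_h^0(u_n^{0,(j)})$ and invoking the Lipschitz continuity of $\mathcal{G}_h^0$, into $\tilde{e}(h, u_n^0)$ plus a term proportional to $\norm{u_n^0 - u_{n,h}^0}$.

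With the one-step estimate secured, I would iterate it from the common initial ensemble $u_0^0 = u_{0,h}^0$, exactly as in the proof of Proposition \ref{convergence_result}, to obtain
\begin{equation*}
\mathbb{E}\left[\norm{u_N^0 - u_{N,h}^0}\right] \le \gamma \sum_{i=0}^{N-1}\alpha^{N-1-i}\,\mathbb{E}\left[\tilde{e}(h, u_i^0)\right].
\end{equation*}
Under the hypothesis $p^0 \in H^{q+1}(\Omega)$ with finite elements of degree $r$, Lemma \ref{fh_goes_to_0} gives $\tilde{e}(h, u_i^0) \le \tilde{K}h^{s+1}$ with $s = \min\{r,q\}$, uniformly in $i$, so summing the geometric series yields the claim with $K_2 = \gamma(\alpha^N - 1)\tilde{K}/(\alpha - 1)$.

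I expect the only genuinely new point to be the uniform-in-$h$ Lipschitz continuity of $\mathcal{G}_h^0$ and, relatedly, the admissibility of Lemma \ref{covariance_bound} for the discretized ensemble; once these are in place the whole structure of Lemma \ref{convergence_local} and Proposition \ref{convergence_result} transfers line by line, with $\tilde{e}$ replacing $e$. The main obstacle is therefore bookkeeping rather than conceptual: one must verify that every constant entering the one-step bound is truly independent of $h$, which ultimately rests on the $h$-uniform coercivity of the discrete homogenized problem.
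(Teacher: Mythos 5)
Your proposal is correct and takes essentially the same route as the paper, whose proof of Proposition \ref{convergence_result_h} consists precisely of the remark that one repeats the argument of Lemma \ref{convergence_local} and Proposition \ref{convergence_result} with the pair $(\mathcal{G}^{\varepsilon},\mathcal{G}^0)$ replaced by $(\mathcal{G}^0,\mathcal{G}^0_h)$ and Lemma \ref{f_goes_to_0} replaced by Lemma \ref{fh_goes_to_0}. You are in fact more explicit than the paper about the one point that genuinely requires verification, namely the $h$-uniform Lipschitz continuity of $\mathcal{G}^0_h$ (via the $h$-uniform coercivity of the discrete bilinear form) needed for Lemma \ref{covariance_bound} to apply to the discretized ensemble.
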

\begin{proof}
The proof of Proposition \ref{convergence_result_h} is identical to the proof of Proposition \ref{convergence_result}, except that all the ensembles $\{ u_n^{\varepsilon} \}_{n=1}^N$ obtained by the multiscale operator $\mathcal{G}^{\varepsilon}$ have to be replaced by the ensembles $\{ u_{n,h}^0 \}_{n=1}^N$ obtained by the finite element discretization of the homogenized operator $\mathcal{G}^0_h$. Moreover Lemma \ref{f_goes_to_0} for the error $e$ has to be replaced by Lemma \ref{fh_goes_to_0} for the error $\tilde{e}$.
\end{proof}

Applying Proposition \ref{convergence_result} and Proposition \ref{convergence_result_h}, we finally prove Theorem \ref{convergence_result_full}.
\begin{proof}[Proof of Theorem \ref{convergence_result_full}]
An application of the triangle inequality yields
\[ \mathbb{E}[ \norm{u_N^{\varepsilon} - u_{N,h}^0} ] \le \mathbb{E} [ \norm{u_N^{\varepsilon} - u_N^0} ] + \mathbb{E} [ \norm{u_N^0 - u_{N,h}^0} ]. \]
The two addends can be bounded applying Proposition \ref{convergence_result} and Proposition \ref{convergence_result_h}, thus obtaining the desired result for $C = \max \{ K_1, K_2 \}$.
\end{proof}

\subsection{Convergence of the posterior distributions}\label{sec:conv_Bayes}

In this section, we give the proof of Theorem \ref{convergence_posterior_distributions}, i.e., the convergence of the discrete posterior measures $\mu^\epl$ to $\mu^0_h$ introduced in \eqref{eq:ms_hom_posteriors} as $\epl, h \to 0$. Let $u^* \in \R^M$ and let $B_R(u^*)$ be the ball of radius $R$ centered in $u^*$ with respect to the norm $\norm{\cdot}_s$ with $s \in [1,\infty]$. Due to the discrete nature of these distributions, we study convergence with respect to the Wasserstein metrics, for which we report its standard definition in the metric spaces $(B_R(u^*), \norm{\cdot}_s)$, which can be found, e.g., in \cite{San15}.

\begin{definition}
\label{Wasserstein_definition}
Let $\mu$ and $\nu$ be two probability measures on the metric space $(B_R(u^*), \norm{\cdot}_s)$. The Wasserstein distance between $\mu$ and $\nu$ is defined for all $p \in [1, \infty)$ as
\begin{equation}
\label{Wasserstein_distance}
W_{p,s}(\mu, \nu) = \left ( \inf_{\gamma \in \Gamma(\mu, \nu)} \int_{B_R(u^*) \times B_R(u^*)} \norm{u - v}_s^p d \gamma(u,v) \right )^{1/p},
\end{equation}
where $\Gamma(\mu, \nu)$ denotes the collection of all joint distributions on $B_R(u^*) \times B_R(u^*)$ with marginals $\mu$ and $\nu$ on the first and second factors respectively.
\end{definition}

\begin{remark} If $\mu$ and $\nu$ are two discrete distributions on finite state spaces, respectively $\Omega_1 = \{ u_1, \dots, u_{K_1} \}$ and $\Omega_2 = \{ v_1, \dots, v_{K_2} \}$ included in $B_R(u^*)$, then \eqref{Wasserstein_distance} can be written as
\begin{equation}
\label{Wasserstein_distance_discrete}
W_{p,s}(\mu, \nu) = \left ( \inf_{\gamma \in \R^{K_1 \times K_2}} \sum_{i=1}^{K_1} \sum_{j=1}^{K_2} \norm{u_i - v_j}_s^p \gamma_{ij} \right )^{1/p},
\end{equation}
where the matrix $\gamma$ has to satisfy the following constraints
\begin{equation}\label{eq:Wass_constraints}
\sum_{j=1}^{K_2} \gamma_{ij} = \mu(u_i) \quad \text{for all } i = 1, \dots K_1, \qquad \sum_{i=1}^{K_1} \gamma_{ij} = \nu(v_j) \quad \text{for all } j = 1, \dots K_2.
\end{equation}
\end{remark}

We now show that the distance $W_{1,2}$ is bounded by the distance induced by the ensemble norm defined in \eqref{ensemble_norm}. This result will be crucial later to prove Theorem \ref{convergence_result_full}.

\begin{lemma}
\label{Wasserstein_ensemble_norm}
Let $u_1 = \{ u_1^{(j)} \}_{j=1}^J$, $u_2 = \{ u_2^{(j)} \}_{j=1}^J$ be two ensembles of particles and let $\mu_1, \mu_2$ be the corresponding distributions defined as sum of Dirac masses
\begin{equation*}
\mu_1 = \frac{1}{J} \sum_{j=1}^J \delta_{u_1^{(j)}}, \qquad \qquad \mu_2 = \frac{1}{J} \sum_{j=1}^J \delta_{u_2^{(j)}}.
\end{equation*}
Then for all $s\in[1,\infty]$ and $p\in[1,\infty)$ it holds
\[ W_{p,s}(\mu_1, \mu_2) \le \left ( \frac{1}{J} \sum_{j=1}^J \norm{u_1^{(j)} - u_2^{(j)}}_s^p \right )^{\frac{1}{p}} \]
and, in particular,
\[ W_{1,2}(\mu_1, \mu_2) \le \norm{u_1 - u_2}. \]
\end{lemma}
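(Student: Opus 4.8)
The plan is to bound $W_{p,s}$ from above by exhibiting a single explicit coupling and computing its transport cost, since by Definition \ref{Wasserstein_definition} the Wasserstein distance is an \emph{infimum} over all couplings $\gamma \in \Gamma(\mu_1,\mu_2)$. The natural candidate is the \emph{diagonal} coupling that pairs the $j$-th particle of the first ensemble with the $j$-th particle of the second, each carrying weight $1/J$. Concretely, I would consider the probability measure on $B_R(u^*) \times B_R(u^*)$ given by
\[
\gamma = \frac{1}{J} \sum_{j=1}^J \delta_{(u_1^{(j)},\, u_2^{(j)})},
\]
which is well-defined since, by Assumption \ref{ass_algo}, all particles lie in $B_R(u^*)$.

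The first step is to verify that $\gamma \in \Gamma(\mu_1,\mu_2)$, i.e.\ that its two marginals are $\mu_1$ and $\mu_2$. Projecting onto the first coordinate sends each $\delta_{(u_1^{(j)},u_2^{(j)})}$ to $\delta_{u_1^{(j)}}$, so the first marginal is $\frac{1}{J}\sum_{j=1}^J \delta_{u_1^{(j)}} = \mu_1$, and symmetrically the second marginal equals $\mu_2$. This identification is immediate from the Dirac-mass representation and needs no distinctness assumption on the particles: if several particles happen to coincide, the indexed sum simply reassembles the correct empirical measure.

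The second step is to evaluate the cost of $\gamma$ and invoke the definition. Since $\gamma$ is admissible, it upper-bounds the infimum, so that
\[
W_{p,s}(\mu_1,\mu_2)^p \le \int_{B_R(u^*) \times B_R(u^*)} \norm{u-v}_s^p \, d\gamma(u,v) = \frac{1}{J} \sum_{j=1}^J \norm{u_1^{(j)} - u_2^{(j)}}_s^p,
\]
and taking $p$-th roots gives the first claimed inequality. Specializing to $p = 1$ and $s = 2$ then yields
\[
W_{1,2}(\mu_1,\mu_2) \le \frac{1}{J} \sum_{j=1}^J \norm{u_1^{(j)} - u_2^{(j)}}_2 = \norm{u_1 - u_2},
\]
where the last equality is precisely the definition \eqref{ensemble_norm} of the ensemble norm applied to the linear combination $u_1 - u_2$.

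There is essentially no hard step here: the whole content of the proof is the choice of the diagonal coupling, and the only point deserving a moment's care is the verification of the marginals when particles may be repeated, which the Dirac-mass bookkeeping handles automatically. As an equivalent alternative, one could argue through the discrete formulation \eqref{Wasserstein_distance_discrete}--\eqref{eq:Wass_constraints} by taking the transport plan $\gamma_{ij} = \tfrac{1}{J}$ if $i = j$ and $\gamma_{ij} = 0$ otherwise, which manifestly satisfies the constraints \eqref{eq:Wass_constraints} and produces the same bound.
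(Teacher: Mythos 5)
Your proof is correct and is essentially identical to the paper's: the paper also takes the diagonal coupling (in its discrete formulation, $\gamma_{ij}=1/J$ if $i=j$ and $0$ otherwise, exactly as in your closing remark), checks the marginal constraints, and evaluates the cost to get the bound, then specializes to $p=1$, $s=2$.
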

\begin{proof}
Take $\gamma^*$ defined as
\begin{equation*}
\gamma^*(u_1^{(j)}, u_2^{(i)}) =
\begin{cases}
\frac{1}{J} & \text{ if } i = j \\
0 & \text{ if } i \neq j,
\end{cases}
\end{equation*}
which satisfies the constraints \eqref{eq:Wass_constraints}, and note that
\begin{equation*}
\sum_{j=1}^{J} \sum_{i=1}^{J} \norm{u_1^{(j)} - u_2^{(i)}}_s^p \gamma^*(u_1^{(j)}, u_2^{(i)}) = \frac{1}{J} \sum_{j=1}^{J} \norm{u_1^{(j)} - u_2^{(j)}}_s^p.
\end{equation*}
Therefore, by definition of Wasserstein distance for discrete distributions on finite spaces \eqref{Wasserstein_distance_discrete}, we deduce that
\[ W_{p,s}(\mu_1, \mu_2) \le \left ( \frac{1}{J} \sum_{j=1}^J \norm{u_1^{(j)} - u_2^{(j)}}_s^p \right )^{\frac{1}{p}}, \]
which is the desired result. Finally, taking $p = 1$ and $s = 2$ and recalling the ensemble norm defined in \eqref{ensemble_norm}, we obtain the second inequality.
\end{proof}

We now analyze the relationship between the weak $L^1$ convergence introduce in Definition \ref{weak_convergence_L1_distribution} and the convergence with respect to the expectation of the Wasserstein distance for random probability measures. In particular, we prove that the latter implies the former, which was already proved in \cite{San15} for non-random measures. Here, we extend the result to random probability measures. The proof of the following Lemma is given in the Appendix.
\begin{lemma}
\label{equivalence_convergence_DW1}
Let $(\Omega, \mathcal{A}, P)$ be a probability space. Let the sequence $\{ \mu_n \}_{n \in \mathbb{N}}$ and $\mu$ be random probability measures on the metric space $(B_R(u^*), \norm{\cdot}_s)$ dependent on the random variable $\xi$ on $(\Omega, \mathcal{A}, P)$. If
\[ \mathbb{E}_{\xi} [W_{1,s}(\mu_n, \mu)] \to 0, \]
then $\mu_n \xrightharpoonup{L^1} \mu$.
\end{lemma}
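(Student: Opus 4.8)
The plan is to reduce weak $L^1$ convergence, which is tested against all of $C^0_B(B_R(u^*))$, to the class of Lipschitz test functions that is naturally controlled by the Wasserstein distance, and then to push the resulting deterministic estimate through the expectation in $\xi$. The starting point is the Kantorovich--Rubinstein duality, which on the metric space $(B_R(u^*), \norm{\cdot}_s)$ gives, for each fixed realization of $\xi$,
\[ W_{1,s}(\mu_n, \mu) = \sup_{\mathrm{Lip}(f) \le 1} \left| \int_{B_R(u^*)} f \, d\mu_n - \int_{B_R(u^*)} f \, d\mu \right|, \]
where the supremum runs over all functions that are $1$-Lipschitz with respect to $\norm{\cdot}_s$. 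Consequently, for any Lipschitz function $g$ with constant $\mathrm{Lip}(g)$ one obtains the pointwise-in-$\xi$ bound
\[ \left| \int_{B_R(u^*)} g \, d\mu_n - \int_{B_R(u^*)} g \, d\mu \right| \le \mathrm{Lip}(g) \, W_{1,s}(\mu_n, \mu). \]
Taking expectation in $\xi$ and invoking the hypothesis $\mathbb{E}_\xi[W_{1,s}(\mu_n, \mu)] \to 0$ immediately yields the weak $L^1$ limit tested against any single Lipschitz $g$.

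First I would fix an arbitrary $f \in C^0_B(B_R(u^*))$ and $\delta > 0$. Since $B_R(u^*)$ is a closed and bounded subset of $\R^M$, hence compact, $f$ is uniformly continuous, and an inf-convolution (Moreau--Yosida) regularization $g(x) = \inf_{y} \left[ f(y) + \lambda \norm{x-y}_s \right]$ produces, for $\lambda$ large, a Lipschitz function with $\norm{f - g}_\infty < \delta$. Inserting $g$ and applying the triangle inequality under the integral gives, for each $\xi$,
\[ \left| \int f \, d\mu_n - \int f \, d\mu \right| \le 2\norm{f - g}_\infty + \mathrm{Lip}(g) \, W_{1,s}(\mu_n, \mu) \le 2\delta + \mathrm{Lip}(g) \, W_{1,s}(\mu_n, \mu), \]
the two copies of $\norm{f-g}_\infty$ accounting for the replacement of $f$ by $g$ against each of $\mu_n$ and $\mu$. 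Taking expectation and letting $n \to \infty$ shows $\limsup_n \mathbb{E}_\xi\left[ \left| \int f \, d\mu_n - \int f \, d\mu \right| \right] \le 2\delta$, and since $\delta > 0$ is arbitrary the left-hand side vanishes, which is exactly $\mu_n \xrightharpoonup{L^1} \mu$.

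The routine content is the deterministic $W_1$-to-weak implication already recorded for non-random measures in \cite{San15}; the genuinely new point is only that every bound above holds pointwise in $\xi$ and survives the expectation, so the single subtlety is measurability. I expect the hard part to be verifying that $\xi \mapsto \int f \, d\mu_n$ and $\xi \mapsto W_{1,s}(\mu_n, \mu)$ are measurable, so that the expectations are well defined: measurability of the latter is implicit in the hypothesis, while the former follows because $\mu_n$ is the empirical measure of the EnKF ensemble, whose particles depend measurably on the randomized data $\xi$. Compactness of $B_R(u^*)$ enters twice, guaranteeing both the uniform continuity of $f$ and that the inf-convolution approximant has a finite Lipschitz constant; this is where Assumption \ref{ass_algo}, which confines the ensemble to $B_R(u^*)$, is used.
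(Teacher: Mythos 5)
Your proposal is correct and follows essentially the same route as the paper: the Kantorovich--Rubinstein duality bounds the error for Lipschitz test functions pointwise in $\xi$, the expectation of $W_{1,s}$ then controls the expected error, and the extension to all of $C^0_B(B_R(u^*))$ is by uniform approximation with Lipschitz functions. The only difference is that you spell out the approximation step (inf-convolution plus the $2\norm{f-g}_\infty$ correction), which the paper compresses into the phrase ``by density''; your version is the more complete of the two.
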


We can now complete the proof of Theorem \ref{convergence_posterior_distributions}.

\begin{proof}[Proof of Theorem \ref{convergence_posterior_distributions}]
Applying Lemma \ref{Wasserstein_ensemble_norm} and due to Theorem \ref{convergence_result_full}, we deduce that for $\epl, h \to 0$ it holds
\[ \mathbb{E} [ W_{1,2}(\mu^{\varepsilon}, \mu_h^0) ] \to 0. \]
Note that the only difference in the update step of the EnKF when used for a point estimate and in the Bayesian framework is that $\Gamma$ is replaced by $\Delta^{-1} \Gamma$ where $\Delta = 1/N$. The constants of the proof of Theorem \ref{convergence_result_full} depend on $\norm{\Gamma^{-1}}_2$, which is now replaced by $\norm{(\Delta^{-1} \Gamma)^{-1}}_2$, which can be bounded by $\norm{\Gamma^{-1}}_2$ as
\[ \norm{(\Delta^{-1} \Gamma)^{-1}}_2 = \Delta \norm{\Gamma^{-1}}_2 \le \norm{\Gamma^{-1}}_2. \]
Finally, applying Lemma \ref{equivalence_convergence_DW1}, we obtain the desired result.
\end{proof}

\section{Modelling error}\label{Modelling}
In this section, we consider the effects of model misspecification due to the homogenization and discretization error. All the results presented in Section \ref{Convergence} deal with the asymptotic case $h, \epl \to 0$, which is unrealistic in applications. Let us recall that the original inverse problem involves predicting the exact unknown $u^*$ from observations originated by the model
\begin{equation}
\label{model_error_y}
y = \mathcal{G}^{\varepsilon}(u^*) + \eta,
\end{equation}
where $\eta \sim \mathcal{N}(0,\Gamma)$ is the noise. Since evaluating $\mathcal G^\epl$ is too expensive and in many applications unfeasible, we wish to employ the cheaper forward operator $\mathcal{G}^0_h$. Hence, we rewrite \eqref{model_error_y} as
\begin{equation}
\label{model_error_y2}
y = \mathcal{G}^0_h(u^*) + \mathcal{E}(u^*) + \eta,
\end{equation}
where
\[ \mathcal{E}(u^*) \defeq \mathcal{G}^{\varepsilon}(u^*) - \mathcal{G}^0_h(u^*). \]
The quantity $\mathcal{E}(u^*)$ represents the error introduced by misspecification of the forward model. Equation \eqref{model_error_y2} shows that the observed data $y$ can be seen as data originating by the discrete homogenized model which is affected by two sources of errors, the original noise and the modelling error. This formulation of modelling error was originally presented in \cite{CES14}, and then applied to multiscale inverse problems in \cite{AbD20}. Following \cite{CES14, AbD20}, we assume that the modelling error is a Gaussian random variable independent of the noise $\eta$, so that $\mathcal{E} \sim \mathcal{N}(m, \Sigma)$ for all $u$, and write
\begin{equation}
\label{model_error_y3}
y = \mathcal{G}^0_h(u^*) + m + \zeta + \eta,
\end{equation}
where $\zeta \sim \mathcal{N}(0, \Sigma)$. There is no theoretical guarantee for the modelling error to be distributed as a Gaussian in this framework. Nevertheless, it has been shown in \cite{NoP09} that in the one-dimensional case a Gaussian assumption can be employed effectively for the modelling error, thus partially justifying our choice. Then we define
\[ \tilde{y} = y - m \qquad \text{and} \qquad \tilde{\eta} = \eta + \zeta \sim \mathcal{N}(0, \Gamma + \Sigma) \]
and, from \eqref{model_error_y3}, we obtain
\begin{equation}
\label{model_error_y4}
\tilde{y} = \mathcal{G}^0_h(u^*) + \tilde{\eta}.
\end{equation}
Therefore, if the mean $m$ and covariance $\Sigma$ of the modelling error are known, a more reliable approximation of the unknown $u^*$ can be obtained applying the EnKF to \eqref{model_error_y4}. The modelling error distribution, by assumption fully determined by its mean and covariance, is approximated offline. We sample $N_{\mathcal{E}}$ unknowns $\{ u_i \}_{i=1}^{N_\mathcal{E}}$ from $\mu_0$ and, for all $i = 1, \dots, N_{\mathcal{E}}$, we apply both the forward operators $\mathcal{G}^{\varepsilon}(u_i)$ and $\mathcal{G}^0_h(u_i)$. Then we compute
\[ \mathcal{E}_i = \mathcal{G}^{\varepsilon}(u_i) - \mathcal{G}^0_h(u_i), \]
and the mean $m$ and the covariance $\Sigma$ are obtained as the empirical mean and covariance of the sample $\{\mathcal E_i\}_{i=1}^{N_{\mathcal E}}$. This procedure is computationally involved due to the multiple evaluations of $\mathcal G^\epl$, but it has to be performed only once and can then be applied to different sets of observations and true values $u^*$. Let us also remark that on the one hand, due to the theory of homogenization, the modelling error can be considered negligible when $\epl$ is very small, and the expensive estimation of $\mathcal E$ may not be necessary. On the other hand, when $\epl$ is larger, the homogenized equation does not provide with a good approximation of the multiscale problem, and an estimation of $\mathcal E$ is required. One may rightfully argue that in case $\epl = \mathcal O(1)$, it is possible to evaluate the forward operator $\mathcal G^\epl$ without a large computational effort. Hence, the techniques presented in this section are relevant for mid-range values of $\epl$, for which $\mathcal E$ is significant with respect to the noise $\eta$. Moreover, we remarked in practice via numerical experiments that a small number $N_{\mathcal E}$ can be employed to obtain a satisfactory approximation of the modelling error. A theoretical justification of this property is provided by Theorem \ref{inequality_modelling_error} and Theorem \ref{inequality2_modelling_error}.

In order to obtain a more reliable approximation of the distribution of the modelling error, we can follow a dynamic approach based on the estimation of the mean $m$ and the covariance $\Sigma$ online, i.e., during the run of the EnKF algorithm. This methodology has been developed in \cite{CDS18}. In particular, we sequentially apply the ensemble Kalman method for $\mathcal{L}$ levels and, at each level $\ell = 1, \dots, \mathcal{L}$, we update the distribution of the modelling error, which is denoted by $\nu^{\ell} = \mathcal{N}(m^{\ell}, \Sigma^{\ell})$. Letting
\[ \mu_n^{\ell} = \frac{1}{J} \sum_{j=1}^J \delta_{u^{\ell (j)}_n} \]
be the approximation of the distribution of the particles at iteration $n$ at level $\ell$, $\mu_0^{\ell + 1} = \mu_{N^{\ell}}^{\ell}$ and $\mu_0^1 = \mu_0$, where $N^{\ell}$ is the number of iterations at level $\ell$, then the mean $m^{\ell}$ and the covariance $\Sigma^{\ell}$ are approximated as in the offline approach with the only difference that $\mu_0$ is replaced by $\mu_0^{\ell}$. This approach provides indeed a better approximation of the modelling error as instead of taking the samples from the prior distribution, they are drawn from distributions which are progressively closer to the true posterior. On the other hand, this procedure has to be done online and it is computationally expensive because it requires the resolution of $N_{\mathcal{E}} = \sum_{\ell=1}^{\mathcal{L}} N_{\mathcal{E}}^{\ell}$ full multiscale problems.

Finally, we are interested in studying whether the simple offline method for estimating the modelling error provides indeed a good approximation. In this direction, we give in Theorem \ref{inequality_modelling_error} and Theorem \ref{inequality2_modelling_error} a criterion on how to choose the number $N_{\mathcal{E}}$ of full multiscale problems which has to be solved in order to have a reliable approximation of the true mean $m^*$ and covariance $\Sigma^*$ of the modelling error with respect to $\epl$ and $h$. Before stating Theorem \ref{inequality_modelling_error} and Theorem \ref{inequality2_modelling_error}, let us recall the Hoeffding's and McDiarmid's inequalites, which will be used in the proofs. Let $\{ Y_i \}_{i=1}^N$ be independent random variables with values in $[a,b]$, and let $\bar{Y}$ be the sample average of $\{Y_i\}_{i=1}^N$. Then, the Hoeffding's inequality states that for all $\eta \in \R$ it holds
\[ \mathbb{P} (\abs{\bar{Y} - \mathbb{E}[Y]} \ge \eta) \le 2 \exp \left \{ - \frac{2 \eta^2 N}{(b-a)^2} \right \}. \]
Moreover, let $\{ X_i \}_{i=1}^N$ be independent random variables with values in the space $\mathcal{X}$, and let $\phi \colon \mathcal{X}^N \to \R$ satisfy for all $i = 1, \dots, N$
\begin{equation}
\sup_{x_1, \dots, x_N, \hat{x}_i} \abs{\phi(x_1, \dots, x_{i-1}, x_i, x_{i+1}, \dots, x_N) - \phi(x_1, \dots, x_{i-1}, \hat{x}_i, x_{i+1}, \dots, x_N)} \le c,
\end{equation}
then the McDiarmid's inequality states that for all $\eta \in \R$ it holds
\begin{equation}
\mathbb{P} ( \abs{\phi(X_1, \dots, X_N) - \mathbb{E}[\phi(X_1, \dots, X_N)]} \ge \eta ) \le 2 \exp \left \{ - \frac{2 \eta^2}{N c^2} \right \}.
\end{equation}

\begin{theorem}
	\label{inequality_modelling_error}
	Let $\alpha \in (0,1)$, $\eta > 0$ and $C_{\mathcal{E}} = \max \{ K, \tilde{K} \}$, where $K$ and $\tilde{K}$ are the constants of Lemma \ref{f_goes_to_0} and Lemma \ref{fh_goes_to_0}. Let $\{ \mathcal{E}_i \}_{i=1}^{N_{\mathcal{E}}} \subset \R^L$ be given by
	\[ \mathcal{E}_i = \mathcal{G}^{\varepsilon}(u_i) - \mathcal{G}^0_h(u_i) \qquad \text{for all } i = 1, \dots, N_{\mathcal{E}}, \]
	for a sample of realizations $\{ u_i \}_{i=1}^{N_{\mathcal{E}}}$ from the standard normal distribution $\mathcal{N}(0,I)$, let $m$ be the sample mean of $\{\mathcal E_i\}_{i=1}^{N_{\mathcal E}}$ and $m^* = \mathbb{E}[\mathcal{E}_i]$. If
	\[ N_{\mathcal{E}} \ge 4 C_{\mathcal{E}}^2 \frac{L}{\eta^2} \log \left ( \frac{2L}{\alpha} \right ) \left [ \varepsilon^2 + h^{2(s+1)} \right ], \]
	where $s$ is given by Lemma \ref{fh_goes_to_0}, then
	\[ \mathbb{P} \left ( \norm{m - m^*}_2 \le \eta \right ) \ge 1 - \alpha. \]
\end{theorem}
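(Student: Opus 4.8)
The plan is to reduce the vector-valued concentration statement to $L$ scalar Hoeffding bounds, one per coordinate, and then recombine them with a union bound. First I would establish a deterministic (almost sure) bound on the magnitude of each sample $\mathcal{E}_i$. Writing $\mathcal{E}_i = (\mathcal{G}^{\varepsilon}(u_i) - \mathcal{G}^0(u_i)) + (\mathcal{G}^0(u_i) - \mathcal{G}^0_h(u_i))$ and applying the triangle inequality, the two error lemmas, namely Lemma \ref{f_goes_to_0} applied to the singleton ensemble $\{u_i\}$ and Lemma \ref{fh_goes_to_0} likewise, give $\norm{\mathcal{G}^{\varepsilon}(u_i) - \mathcal{G}^0(u_i)}_2 \le K\varepsilon$ and $\norm{\mathcal{G}^0_h(u_i) - \mathcal{G}^0(u_i)}_2 \le \tilde{K}h^{s+1}$, uniformly in $u_i$ since the constants $K$ and $\tilde{K}$ are independent of $u$. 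Hence $\norm{\mathcal{E}_i}_2 \le C_{\mathcal{E}}(\varepsilon + h^{s+1}) \eqdef b$ for every $i$, and in particular each coordinate satisfies $\abs{(\mathcal{E}_i)_\ell} \le b$, i.e.\ it takes values in $[-b, b]$.

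Next I would fix a coordinate $\ell \in \{1, \dots, L\}$. Since the $u_i$ are i.i.d.\ and each $\mathcal{E}_i$ is a deterministic function of $u_i$, the scalars $\{(\mathcal{E}_i)_\ell\}_{i=1}^{N_{\mathcal{E}}}$ are i.i.d.\ and bounded in the interval $[-b,b]$ of length $2b$. Writing $m_\ell$ and $m^*_\ell$ for the $\ell$-th components of $m$ and $m^*$, Hoeffding's inequality with threshold $\eta/\sqrt{L}$ yields
\[
\mathbb{P}\left(\abs{m_\ell - m^*_\ell} \ge \frac{\eta}{\sqrt{L}}\right) \le 2\exp\left\{-\frac{\eta^2 N_{\mathcal{E}}}{2 L b^2}\right\}.
\]
The key elementary observation is that $\norm{m - m^*}_2 \le \eta$ whenever $\abs{m_\ell - m^*_\ell} \le \eta/\sqrt{L}$ for all $\ell$, since then $\norm{m - m^*}_2^2 = \sum_{\ell=1}^L \abs{m_\ell - m^*_\ell}^2 \le L(\eta/\sqrt{L})^2 = \eta^2$. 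A union bound over the $L$ coordinates therefore gives
\[
\mathbb{P}\left(\norm{m - m^*}_2 > \eta\right) \le \sum_{\ell=1}^L \mathbb{P}\left(\abs{m_\ell - m^*_\ell} > \frac{\eta}{\sqrt{L}}\right) \le 2L \exp\left\{-\frac{\eta^2 N_{\mathcal{E}}}{2 L b^2}\right\}.
\]

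Finally I would solve for the sample size that makes the right-hand side at most $\alpha$. The requirement $2L\exp\{-\eta^2 N_{\mathcal{E}}/(2Lb^2)\} \le \alpha$ is equivalent to $N_{\mathcal{E}} \ge (2Lb^2/\eta^2)\log(2L/\alpha)$. Substituting $b^2 = C_{\mathcal{E}}^2(\varepsilon + h^{s+1})^2$ and using the elementary inequality $(\varepsilon + h^{s+1})^2 \le 2(\varepsilon^2 + h^{2(s+1)})$ bounds this requirement above by $4C_{\mathcal{E}}^2 (L/\eta^2)\log(2L/\alpha)[\varepsilon^2 + h^{2(s+1)}]$, which is precisely the stated hypothesis on $N_{\mathcal{E}}$. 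Therefore the stated lower bound forces $\mathbb{P}(\norm{m - m^*}_2 > \eta) \le \alpha$, i.e.\ $\mathbb{P}(\norm{m - m^*}_2 \le \eta) \ge 1 - \alpha$. The only genuinely delicate point is the passage from the Euclidean norm to per-coordinate control: one must pay a factor $\sqrt{L}$ in the Hoeffding threshold and a factor $L$ in the union bound, and it is exactly these that produce the $L$ prefactor and the $\log(2L/\alpha)$ term in the sample-complexity bound, while the almost sure bound on $\norm{\mathcal{E}_i}_2$ supplies the $[\varepsilon^2 + h^{2(s+1)}]$ factor; everything else is routine arithmetic.
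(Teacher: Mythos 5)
Your proof is correct and follows essentially the same route as the paper: a uniform almost-sure bound $\norm{\mathcal{E}_i}_2 \le C_{\mathcal{E}}(\varepsilon + h^{s+1})$ from Lemma \ref{f_goes_to_0} and Lemma \ref{fh_goes_to_0}, coordinate-wise Hoeffding at threshold $\eta/\sqrt{L}$, a union bound over the $L$ coordinates, and the inequality $(\varepsilon + h^{s+1})^2 \le 2(\varepsilon^2 + h^{2(s+1)})$ to reach the stated sample-size condition. The exponents you obtain match the paper's after simplification, so there is nothing to add.
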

\begin{proof}
	First, note that the modelling error is bounded, indeed by Lemma \ref{f_goes_to_0} and Lemma \ref{fh_goes_to_0}, we have for each $i = 1, \dots, N_{\mathcal{E}}$
	\begin{equation*}
	\norm{\mathcal{E}_i}_2 = \norm{\mathcal{G}^{\varepsilon}(u_i) - \mathcal{G}^0_h(u_i)}_2 \le \norm{\mathcal{G}^{\varepsilon}(u_i) - \mathcal{G}^0(u_i)}_2 + \norm{\mathcal{G}^0(u_i) - \mathcal{G}^0_h(u_i)}_2 \le K \varepsilon + \tilde{K} h^{s+1},
	\end{equation*}
	so each component $(\mathcal{E}_i)_l$, for $l = 1, \dots, L$, is bounded by the same constant
	\begin{equation}
	\label{bound_component}
	\abs{(\mathcal{E}_i)_l} \le \norm{\mathcal{E}_i}_2 \le K \varepsilon + \tilde{K} h^{s+1} \le C_{\mathcal{E}} ( \varepsilon + h^{s+1} ).
	\end{equation}
	Observe that if
	\begin{equation}
	\abs{m_l - m^*_l} \le \frac{\eta}{\sqrt{L}} \qquad \text{for each } l = 1, \dots, L,
	\end{equation}
	then
	\begin{equation}
	\norm{m - m^*}_2 = \left ( \sum_{l=1}^L \abs{m_l - m^*_l}^2 \right )^{\frac{1}{2}} \le \eta,
	\end{equation}
	which implies that
	\begin{equation}
	\label{1toL}
	\mathbb{P}( \norm{m - m^*}_2 \le \eta ) \ge \mathbb{P} \left ( \abs{m_l - m^*_l} \le \frac{\eta}{\sqrt{L}} \quad \forall \; l = 1, \dots, L \right ).
	\end{equation}
	Using \eqref{bound_component} and applying Hoeffding's inequality we have
	\begin{equation}
	\label{hoeffding}
	\mathbb{P} \left ( \abs{m_l - m^*_l} \ge \frac{\eta}{\sqrt{L}} \right ) \le 2 \exp \left \{ - \frac{2 \eta^2 N_{\mathcal{E}}}{4 L C_{\mathcal{E}}^2 (\varepsilon + h^{s+1})^2 } \right \} \le 2 \exp \left \{ - \frac{\eta^2 N_{\mathcal{E}}}{4 L C_{\mathcal{E}}^2 (\varepsilon^2 + h^{2(s+1)})} \right \}.
	\end{equation}
	Define the events $A_l = \left \{ \abs{m_l - m^*_l} \le \frac{\eta}{\sqrt{L}} \right \}$ for each $l = 1, \dots, L$, then we have
	\begin{equation*}
	\mathbb{P} \left ( \abs{m_l - m^*_l} \le \frac{\eta}{\sqrt{L}} \quad \forall \; l = 1, \dots, L \right ) = \mathbb{P} \left ( \bigcap_{l=1}^L A_l \right ),
	\end{equation*}
	and, applying the De Morgan's laws and the union bound, we obtain
	\begin{equation}
	\label{de_morgan_union_bound}
	\mathbb{P}\left ( \bigcap_{l=1}^L A_l \right ) = 1 - \mathbb{P} \left ( \left ( \bigcap_{l=1}^L A_l \right )^C \right ) = 1 - \mathbb{P} \left ( \bigcup_{l=1}^L A_l^C \right ) \ge 1 - \sum_{l=1}^L \mathbb{P}(A_l^C).
	\end{equation}
	Therefore, thanks to \eqref{1toL}, \eqref{hoeffding} and \eqref{de_morgan_union_bound}, we have
	\begin{equation}
	\mathbb{P}( \norm{m - m^*}_2 \le \eta ) \ge 1 - L \max_{l=1,\dots,L} \mathbb{P} \left ( \abs{m_l - m^*_l} \ge \frac{\eta}{\sqrt{L}} \right ) \ge 1 - 2L \exp \left \{ - \frac{\eta^2 N_{\mathcal{E}}}{4 L C_{\mathcal{E}}^2 (\varepsilon^2 + h^{2(s+1)})} \right \},
	\end{equation}
	and if $N_{\mathcal{E}}$ satisfies the hypothesis we obtain the desired result.
\end{proof}

\begin{theorem}
	\label{inequality2_modelling_error}
	Let $\alpha \in (0,1)$, $\eta > 0$ and $C_{\mathcal{E}} = \max \{ K, \tilde{K} \}$, where $K$ and $\tilde{K}$ are the constants of Lemma \ref{f_goes_to_0} and Lemma \ref{fh_goes_to_0}. Let $\{ \mathcal{E}_i \}_{i=1}^{N_{\mathcal{E}}} \subset \R^L$ be given by
	\[ \mathcal{E}_i = \mathcal{G}^{\varepsilon}(u_i) - \mathcal{G}^0_h(u_i) \qquad \text{for all } i = 1, \dots, N_{\mathcal{E}}, \]
	for a sample of realizations $\{ u_i \}_{i=1}^{N_{\mathcal{E}}}$ from the standard normal distribution $\mathcal{N}(0,I)$, let $m$ and $\Sigma$ be the sample mean and covariance of $\{\mathcal E_i\}_{i=1}^{N_{\mathcal E}}$ and $m^* = \mathbb{E}[\mathcal{E}_i]$ and $\Sigma^* = \mathbb{E}[(\mathcal E_i - m)(\mathcal E_i - m)^T]$. If
	\[ N_{\mathcal{E}} \ge \widehat C C_{\mathcal{E}}^4 \frac{L^2}{\eta^2} \log \left ( \frac{2L^2}{\alpha} \right ) \left [ \varepsilon^4 + h^{4(s+1)} \right ], \]
	where $s$ is given by Lemma \ref{fh_goes_to_0} and $\widehat C$ is specified in the proof, then
	\[ \mathbb{P} \left ( \norm{\Sigma - \Sigma^*}_2 \le \eta \right ) \ge 1 - \alpha. \]
\end{theorem}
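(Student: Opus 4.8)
The plan is to mirror the proof of Theorem \ref{inequality_modelling_error}, replacing the sample mean by the sample covariance and Hoeffding's inequality by McDiarmid's, the point being that the entries of the sample covariance are not simple averages of independent bounded terms but quadratic functionals of the whole sample. First I would record, exactly as in \eqref{bound_component}, that each $\mathcal E_i$ is bounded in the Euclidean norm by $B \defeq C_{\mathcal E}(\varepsilon + h^{s+1})$, so that every coordinate satisfies $\abs{(\mathcal E_i)_k} \le B$.

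Next I would reduce the spectral-norm estimate to entrywise estimates. Using $\norm{\Sigma - \Sigma^*}_2 \le \norm{\Sigma - \Sigma^*}_F = (\sum_{k,l=1}^L \abs{\Sigma_{kl} - \Sigma^*_{kl}}^2)^{1/2}$, it suffices to guarantee $\abs{\Sigma_{kl} - \Sigma^*_{kl}} \le \eta/L$ for all $L^2$ pairs $(k,l)$, since then $\norm{\Sigma - \Sigma^*}_F \le \eta$. Reasoning exactly as in \eqref{1toL}--\eqref{de_morgan_union_bound}, De Morgan's laws and a union bound over the $L^2$ entries then yield $\mathbb P(\norm{\Sigma - \Sigma^*}_2 \le \eta) \ge 1 - L^2 \max_{k,l}\mathbb P(\abs{\Sigma_{kl} - \Sigma^*_{kl}} \ge \eta/L)$.

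The heart of the argument is a bounded-difference estimate for each entry viewed as a function $\phi(\mathcal E_1, \dots, \mathcal E_{N_{\mathcal E}}) = \Sigma_{kl}$. Writing $\Sigma_{kl} = \frac1{N_{\mathcal E}}\sum_i (\mathcal E_i)_k (\mathcal E_i)_l - m_k m_l$, I would replace a single sample $\mathcal E_j$ by $\hat{\mathcal E}_j$ and control the resulting variation: the quadratic term changes by at most $2B^2/N_{\mathcal E}$, while each sample-mean coordinate changes by at most $2B/N_{\mathcal E}$ and is itself bounded by $B$, so the cross term $m_k m_l$ changes by at most $4B^2/N_{\mathcal E}$; altogether the bounded-difference constant is $c \le 6 B^2/N_{\mathcal E}$. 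Since the definition $\Sigma^* = \mathbb E[(\mathcal E_i - m)(\mathcal E_i - m)^T]$ gives precisely $\mathbb E[\Sigma_{kl}] = \Sigma^*_{kl}$, McDiarmid's inequality applied to $\phi$ delivers
\[
\mathbb P\left(\abs{\Sigma_{kl} - \Sigma^*_{kl}} \ge \frac{\eta}{L}\right) \le 2\exp\left\{-\frac{2\eta^2 N_{\mathcal E}}{L^2 (6B^2)^2}\right\}.
\]

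Finally I would substitute $B^4 = C_{\mathcal E}^4(\varepsilon + h^{s+1})^4 \le 8\,C_{\mathcal E}^4(\varepsilon^4 + h^{4(s+1)})$, combine with the union bound to get $\mathbb P(\norm{\Sigma-\Sigma^*}_2 > \eta) \le 2L^2\exp\{-\eta^2 N_{\mathcal E}/(18 L^2 B^4)\}$, and require the right-hand side to be at most $\alpha$; solving for $N_{\mathcal E}$ reproduces the stated threshold with $\widehat C = 144$. The main obstacle is the bounded-difference computation: unlike the mean, the sample covariance depends on the data through the subtracted sample mean, so its concentration cannot be obtained from Hoeffding's inequality applied coordinatewise, and one must account carefully for both the quadratic term and the cross term $m_k m_l$ to obtain the $\OO(B^2/N_{\mathcal E})$ sensitivity --- which is exactly what turns the per-sample scale $B^2$ into the $B^4 \sim \varepsilon^4 + h^{4(s+1)}$ dependence appearing in the final bound.
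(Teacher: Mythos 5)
Your proposal is correct and follows essentially the same route as the paper: bound each $\mathcal E_i$ by $C_{\mathcal E}(\varepsilon+h^{s+1})$, reduce the spectral norm to entrywise bounds via the Frobenius norm, establish a bounded-difference constant of order $B^2/N_{\mathcal E}$ for each entry of the sample covariance, apply McDiarmid's inequality, and finish with a union bound over the $L^2$ entries. The only difference is algebraic: by writing $\Sigma_{kl}=\frac{1}{N_{\mathcal E}}\sum_i(\mathcal E_i)_k(\mathcal E_i)_l-m_k m_l$ instead of manipulating the centred sum directly as the paper does, you obtain the sensitivity $6B^2/N_{\mathcal E}$ and hence the sharper constant $\widehat C=144$ in place of the paper's $\widehat C=2304$, and your $1/N_{\mathcal E}$ normalisation also matches the statement's definition of $\Sigma^*$ exactly, so $\mathbb{E}[\Sigma_{kl}]=\Sigma^*_{kl}$ without the slight bias discrepancy present in the paper's $1/(N_{\mathcal E}-1)$ version.
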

\begin{proof}
	First, repeating verbatim the first part of the proof of Theorem \ref{inequality_modelling_error} we have
		\begin{equation}
		\abs{(\mathcal{E}_i)_l} \le \norm{\mathcal{E}_i}_2 \le K \varepsilon + \tilde{K} h^{s+1} \le C_{\mathcal{E}} ( \varepsilon + h^{s+1} ).
		\end{equation}
	Observe that if
	\begin{equation}
	\abs{\Sigma_{j,k} - \Sigma^*_{j,k}} \le \frac{\eta}{L} \qquad \text{for each } j,k = 1, \dots, L,
	\end{equation}
	then denoting by $\norm{\cdot}_F$ the Frobenius norm we have
	\begin{equation}
	\norm{\Sigma - \Sigma^*}_2 \le \norm{\Sigma - \Sigma^*}_F = \left ( \sum_{j,k=1}^L \abs{\Sigma_{j,k} - \Sigma^*_{j,k}}^2 \right )^{\frac{1}{2}} \le \eta,
	\end{equation}
	which implies that
	\begin{equation}
	\label{1toL_cov}
	\mathbb{P}( \norm{\Sigma - \Sigma^*}_2 \le \eta ) \ge \mathbb{P} \left ( \abs{\Sigma_{j,k} - \Sigma^*_{j,k}} \le \frac{\eta}{L} \quad \forall \; j,k = 1, \dots, L \right ).
	\end{equation}
	For all $j,k = 1, \dots, L$ define the functions $\phi_{j,k} \colon (\R^L)^N \to \R$ as
	\[ \phi_{j,k}(x_1, \dots, x_N) = \frac{1}{N-1} \sum_{i=1}^N (x_i^{(j)} - \bar{x}^{(j)}) (x_i^{(k)} - \bar{x}^{(k)}), \]
	where
	\[ \bar{x} = \frac{1}{N} \sum_{i=1}^N x_i, \]
	and the function $\Phi \colon (\R^L)^N \to \R^{L \times L}$ whose component $(j,k)$ is given by $\phi_{j,k}$. Observe that $\Sigma = \Phi(\mathcal{E}_1, \dots, \mathcal{E}_N) $ and $\Sigma^* = \mathbb{E}[\Phi(\mathcal{E}_1, \dots, \mathcal{E}_N)]$. Since the modelling error is bounded, we can restrict the functions $\phi_{j,k}$ to the ball of radius $C_{\mathcal E}(\varepsilon + h^{s+1})$ centred in $0$, $\phi_{j,k} \colon (B_{C_{\mathcal E}(\varepsilon + h^{s+1})})^N \to \R$, allowing us to prove the following bound
		\begin{equation}
		\label{ci}
		\abs{\phi_{j,k} - \phi_{j,k}'} \le \frac{48}{N} C_{\mathcal E}^2 (\varepsilon^2 + h^{2(s+1)}),
		\end{equation}
		where 
		\[ \phi_{j,k} = \phi_{j,k}(x_1, \dots, x_{i-1}, x_i, x_{i+1}, \dots, x_N) \qquad \text{and} \qquad \phi_{j,k}' = \phi_{j,k}(x_1, \dots, x_{i-1}, x_i', x_{i+1}, \dots, x_N). \]
		In fact we have
	\begin{equation}
	\label{decomposition_phi}
	\begin{aligned}
	\abs{\phi_{j,k} - \phi_{j,k}'} &= \left | \frac{1}{N-1} \sum_{n \neq i} (x_n^{(j)} - \bar{x}^{(j)}) (x_n^{(k)} - \bar{x}^{(k)}) + \frac{1}{N-1} (x_i^{(j)} - \bar{x}^{(j)}) (x_i^{(k)} - \bar{x}^{(k)}) \right. \\
	&\quad \left. - \frac{1}{N-1} \sum_{n \neq i} (x_n^{(j)} - \bar{x}'^{(j)}) (x_n^{(k)} - \bar{x}'^{(k)}) - \frac{1}{N-1} (x_i'^{(j)} - \bar{x}'^{(j)}) (x_i'^{(k)} - \bar{x}'^{(k)}) \right | \\
	&\le \frac{1}{N-1} \sum_{n \neq i} \left | (x_n^{(j)} - \bar{x}^{(j)}) (x_n^{(k)} - \bar{x}^{(k)}) - (x_n^{(j)} - \bar{x}'^{(j)}) (x_n^{(k)} - \bar{x}'^{(k)}) \right | \\
	&\quad + \frac{1}{N-1} \left | (x_i^{(j)} - \bar{x}^{(j)}) (x_i^{(k)} - \bar{x}^{(k)}) - (x_i'^{(j)} - \bar{x}'^{(j)}) (x_i'^{(k)} - \bar{x}'^{(k)}) \right |, \\
	&\eqdef Q_1 + Q_2,
	\end{aligned}
	\end{equation}
	where
	\begin{equation}
	\label{means}
	\bar{x} = \frac{1}{N} \left ( \sum_{n \neq i} x_n + x_i \right ) \qquad \text{and} \qquad \bar{x}' = \frac{1}{N} \left ( \sum_{n \neq i} x_n + x'_i \right ).
	\end{equation}
	Now we bound the two terms separately. First, we have
		\begin{equation}
		Q_1 \le \frac{1}{N-1} \abs{\bar{x}^{(k)} - \bar{x}'^{(k)}} \sum_{n \neq i} \abs{x_n^{(j)} - \bar{x}^{(j)}} + \frac{1}{N-1} \abs{\bar{x}^{(j)} - \bar{x}'^{(j)}} \sum_{n \neq i} \abs{x_n^{(k)} - \bar{x}'^{(k)}}.
		\end{equation}
		Let $x,y \in B_{C_{\mathcal E}(\varepsilon + h^{s+1})}$ and note that for all $j=1,\dots,L$ we have
		\begin{equation}
		\label{bound_component_cov}
		\abs{x^{(j)} - y^{(j)}} \le \norm{x - y}_2 \le 2 C_{\mathcal E} ( \varepsilon + h^{s+1} ).
		\end{equation}
		By equations \eqref{means} and \eqref{bound_component_cov}, it holds for all $j=1,\dots,L$
		\begin{equation*}
		\abs{\bar{x}^{(j)} - \bar{x}'^{(j)}} = \frac{1}{N} \left | \sum_{n \neq i} x_n^{(j)} + x_i^{(j)} - \sum_{n \neq i} x_n^{(j)} - x_i'^{(j)} \right | = \frac{1}{N} \abs{x_i^{(j)} - x_i'^{(j)}} \le \frac{2}{N} C_{\mathcal E} (\varepsilon + h^{s+1}),
		\end{equation*}
		therefore we obtain
		\begin{equation}
		\label{result1}
		Q_1 \le \frac{8}{N} C_{\mathcal E}^2 (\varepsilon + h^{s+1})^2 \le \frac{16}{N} C_{\mathcal E}^2 (\varepsilon^2 + h^{2(s+1)}).
		\end{equation}
		Moreover, by \eqref{bound_component_cov} we also get
		\begin{equation}
		\label{result2}
		Q_2 \le \frac{8}{N-1} C_{\mathcal E}^2 (\varepsilon + h^{s+1})^2 \le \frac{16}{N-1} C_{\mathcal E}^2 (\varepsilon^2 + h^{2(s+1)}),
		\end{equation}
		witch, together with \eqref{decomposition_phi}, \eqref{result1} and the fact that $(2N-1)/(N-1) \le 3$ for all $N\ge2$, implies \eqref{ci}
		\begin{equation}
		\abs{\phi_{j,k} - \phi_{j,k}'} \le \frac{16(2N-1)}{N(N-1)} C_{\mathcal E}^2 (\varepsilon^2 + h^{2(s+1)}) \le \frac{48}{N} C_{\mathcal E}^2 (\varepsilon^2 + h^{2(s+1)}).
		\end{equation}
	Therefore, applying McDiarmid's inequality we have
	\begin{equation}
	\label{mcdiarmid1}
	\mathbb{P} \left ( \abs{\Sigma_{j,k} - \Sigma^*_{j,k}} \ge \frac{\eta}{L} \right ) \le 2 \exp \left \{ - \frac{2 \eta^2 N_{\mathcal E}}{2304 L^2 C_{\mathcal E}^4 (\varepsilon^2 + h^{2(s+1)})^2} \right \} \le 2 \exp \left \{ - \frac{\eta^2 N_{\mathcal E} }{\widehat C L^2 C_{\mathcal E}^4 (\varepsilon^4 + h^{4(s+1)})} \right \},
	\end{equation}
	where $\widehat C = 2304$. Finally, we define the events $A_{j,k} = \left \{ \abs{\Sigma_{j,k} - \Sigma^*_{j,k}} \le \frac{t}{L} \right \}$ for each $j,k = 1, \dots, L$ and we repeat the same argument as in the last part of the proof of Theorem \ref{inequality_modelling_error}. Hence, due to \eqref{1toL_cov} and \eqref{mcdiarmid1} we have
		\begin{equation}
		\begin{aligned}
		\mathbb{P}( \norm{\Sigma - \Sigma^*}_2 \le \eta ) &\ge 1 - L^2 \max_{j,k=1,\dots,L} \mathbb{P} \left ( \abs{\Sigma_{j,k} - \Sigma^*_{j,k}} \ge \frac{\eta}{L} \right ) \\
		&\ge 1 - 2L^2 \exp \left \{ - \frac{\eta^2 N_{\mathcal E}}{\widehat C L^2 C_{\mathcal E}^4 (\varepsilon^4 + h^{4(s+1)})} \right \},
		\end{aligned}
		\end{equation}
		and if $N_{\mathcal{E}}$ satisfies the hypothesis we obtain the desired result.
\end{proof}

\begin{remark}
	Note that, in Theorem \ref{inequality_modelling_error} and Theorem \ref{inequality2_modelling_error}, as expected, the number $N_{\mathcal{E}}$ of full multiscale problems tends to infinity if we require no error between the sample and the true mean and covariance ($\eta \to 0$) or certainty that the error is below a certain value ($\alpha \to 0$). Moreover, observe that for any given accuracy the number of samples required $N_{\mathcal E}$ is a increasing function of $\epl$ and $h$, so that if the model $\mathcal G^0_h$ is a good approximation of $\mathcal G$, thus computationally expensive, then only few samples are needed. In particular, notice that in order to obtain a good approximation of the true mean, the number of full multiscale problems is
		\begin{equation}
		N_{\mathcal{E}} = \mathcal O \left( \eta^{-2} \log(\alpha^{-1}) \left( \epl^2 + h^{2(s+1)} \right) \right),
		\end{equation}
		while to have a reliable approximation of the covariance matrix it is required that
		\begin{equation}
		N_{\mathcal{E}} = \mathcal O \left( \eta^{-2} \log(\alpha^{-1}) \left( \epl^4 + h^{4(s+1)} \right) \right).
		\end{equation}
\end{remark}

\section{Numerical experiments}\label{Experiments}

In this section, using the setting of \cite{AbD20}, we present some numerical experiments to illustrate the iterative ensemble Kalman method to solve multiscale inverse problems. \\
Let $\Omega$ be a bounded open domain. We consider a class of parametrized multiscale locally periodic tensors of the type $A^{\varepsilon}_{\sigma^*}(x) = A(\sigma^*(x),x/\varepsilon)$, where $\sigma^* \colon \Omega \to \R$. We assume to know the map $(t,x) \to A(t,x/\varepsilon)$ for all $x \in \Omega$ and $t \in \R$ and we want to estimate the function $\sigma^*$ given measurements computed from the model
\begin{equation}
\label{pb}
\begin{cases}
- \nabla \cdot ( A^{\varepsilon}_{\sigma^*} \nabla p^{\varepsilon} ) = 0 & \text{ in } \Omega, \\
p^{\varepsilon} = g & \text{ on } \partial \Omega.
\end{cases}
\end{equation}
\begin{remark}
	Note that the theory has been developed for Dirichlet homogeneous boundary conditions, but it can be applied to the non-homogeneous case by considering an extension of the function at the boundary and slightly modifying the PDE. For more details we refer to \cite[Remark 8.10]{Sal16}.
\end{remark}
For the unknown $\sigma^*$ we consider the following admissible set
\[ \Sigma = \{ \sigma \in L^{\infty}(\Omega) \colon \sigma^- \le \sigma(x) \le \sigma^+ \}, \]
where $\sigma^-$ and $\sigma^+$ are two given values. \\
The measurements, which we take into account, are the integrals of the normal flux multiplied by some functions with compact support in a portion of the boundary of the domain. More precisely, we consider $I \in \N$ disjoint portions of $\Omega$, which we denote by $\Gamma_i \in \partial \Omega$, $i = 1, \dots, I$, $\Gamma_i \cap \Gamma_j = \emptyset$ for $i \neq j$, and $I$ functions $\phi_i \in H^{1/2}(\partial \Omega)$ with compact support $\mathrm{supp} \; (\phi_i) \subset \Gamma_i$ for all $i = 1, \dots, I$. Moreover, we solve \eqref{pb} for $K \in \N$ Dirichlet data $g_k$, $k = 1, \dots, K$, and we denote by $p_k^\epl$ the solution of the problem. Let $\Lambda_{A^\epl_\sigma} \colon H^{1/2}(\partial \Omega) \to H^{-1/2}(\partial \Omega)$ be the operator which maps the Dirichlet data $g$ to the normal flux of the solution $p^\epl$ of \eqref{pb} 
	\begin{equation}
	\Lambda_{A^\epl_\sigma} g = A^\epl_\sigma \nabla p^\epl \cdot \nu,
	\end{equation}
	where $\nu$ is the exterior unit normal vector to $\partial \Omega$.
	Then we define the multiscale operator $\mathcal{F}^{\varepsilon} \colon \Sigma \to \R^L$ where $L = IK$ by components
	\begin{equation}
	\label{boundary_integral_e}
	\mathcal{F}^{\varepsilon}(\sigma)_{ik} = \mathcal{F}^{\varepsilon}(\sigma)_{l} = \left\langle \Lambda_{A^\epl_\sigma} g_k , \varphi_i \right\rangle_{H^{-1/2}(\partial \Omega), H^{1/2}(\partial \Omega)}, \qquad i = 1, \dots, I, \quad k = 1, \dots, K,
	\end{equation}
	which, with an abuse of notation, can be written
	\begin{equation}
	\mathcal{F}^{\varepsilon}(\sigma)_{ik} = \int_{\Gamma_i} A^{\varepsilon} \nabla p_k^{\varepsilon} \cdot \nu \phi_i ds.
	\end{equation}
The final vector of observations $y$ is given by the sum of the operator $\mathcal{F}^{\varepsilon}$ and a noise
\[ y = \mathcal{F}^{\varepsilon}(\sigma^*) + \eta, \]
where $\eta \sim \mathcal{N}(0, \Gamma)$ and $\Gamma$ is a given symmetric positive definite covariance matrix, which, in our experiments, is a multiple of the identity $\Gamma = \gamma^2 I$ and $\gamma$ is a given value. Observations are computed with a refined Finite Element Method (FEM) with mesh size $h_{\mathrm{obs}} \ll \varepsilon$, while the homogenized version of problem \eqref{pb} is solved using a macro mesh size $h \gg h_{\mathrm{obs}}$. We call $\mathcal{T}_h$ the macro triangulation and $N_h$ the total number of nodes defining $\mathcal{T}_h$. We assume that the prior distribution for the discretization of the unknown $\sigma^*$ on the macro triangulation $\mathcal{T}_h$ is given by $\mathcal{N}(\sigma_0, C)$, where $\sigma_0$ is a given discretization of a function in $\Sigma$ and $C \in \R^{N_h \times N_h}$ is defined by
\begin{equation*}
C_{ij} = \delta \exp \left ( - \frac{\norm{x_i - x_j}_2}{\lambda} \right ),
\end{equation*}
where $\delta, \lambda \in \R^+$ and $\{ x_i \}_{i=1}^{N_h}$ are the nodes of the macro triangulation $\mathcal{T}_h$. The parameter $\lambda$ is a correlation length that describes how the values at different positions of the functions supported by the prior measure are related, while the parameter $\delta$ is an amplitude scaling factor. Regarding the prior modelling, we need to take into account that even if in the homogenized problem the coarse and fine scales have been separated, functions drawn from the prior distribution on the coarse scale can exhibit multiple scales, including the fine scale of our multiscale model, depending on the rate of decay of the prior covariance. This issue can thus be controlled by setting the parameters $\delta$ and $\lambda$. Even though this does not ensure a clear separation between coarse and fine scales, our numerical results illustrate that it is sufficient in practice. \\
In order to reduce the dimensionality of the unknown we use a truncated Karhunen-Lo\`eve expansion. Any sample from the prior distribution $\mathcal{N}(\sigma_0,C)$ can be represented as
\begin{equation}
\label{KL}
\sigma = \sigma_0 + \sum_{m=1}^{N_h} \sqrt{\lambda_m} u_m \psi_m,
\end{equation}
where $\{ \psi_m \}_{m=1}^{N_h}$ is an orthonormal set of eigenvectors of $C$ with corresponding eigenvalues $\{ \lambda_m \}_{m=1}^{N_h}$ in decreasing order, and $\{ u_m \}_{m=1}^{N_h}$ is an i.i.d sequence with $u_m \sim \mathcal{N}(0,1)$. Note that the Karhunen-Lo\`eve expansion works also in the infinite dimensional setting, where $\sigma_0 \in \Sigma$, $C$ is a covariance operator and $\{ \lambda_m, \psi_m \}_{m=1}^{\infty}$ is an orthonormal set of eigenvalues-eigenfunctions with respect to the scalar product in $L^2(\Omega)$. Then the truncated Karhunen-Lo\`eve expansion of the discretization of $\sigma$ consists of taking the first $M$ components of the series in \eqref{KL}
\begin{equation}
\label{KL_truncated}
\sigma \simeq \sigma_0 + \sum_{m=1}^{M} \sqrt{\lambda_m} u_m \psi_m,
\end{equation}
and the actual unknown becomes the vector $u \in \R^M$, whose components are the coefficients $u_m$ in \eqref{KL_truncated}. Then we define the multiscale forward operator $\mathcal{G}^{\varepsilon} \colon \R^M \to \R^L$ as the composition of $\mathcal{F}^{\varepsilon}$ with the truncated Karhunen-Lo\`eve expansion
\[ \mathcal{G}^{\varepsilon}(u) = \mathcal{F}^{\varepsilon} \left ( \sigma_0 + \sum_{m=1}^{M} \sqrt{\lambda_m} u_m \psi_m \right ). \]
In the iterative ensemble Kalman method we do not compute the exact solution of problem \eqref{pb}, but we solve its homogenized version numerically using the macro triangulation $\mathcal{T}_h$, therefore we obtain the homogenized discrete solution $p^0_h$. The problem is solved applying the finite element heterogeneous multiscale method (FE-HMM), which is described in \cite{Abd11, AEE12}. Hence, analogously to the multiscale case, we define the discrete homogenized operator $\mathcal{F}_h^0 \colon \Sigma \to \R^L$ with an abuse of notation as
\begin{equation}
\label{boundary_integral_0}
\mathcal{F}_h^{0}(\sigma)_{l} = \mathcal{F}_h^{0}(\sigma)_{ik} = \int_{\Gamma_i} A^{0} \nabla p_{h_k}^{0} \cdot \nu \phi_i ds, \qquad i = 1, \dots, I, \quad k = 1, \dots, K,
\end{equation}
and the discrete homogenized forward operator $\mathcal{G}^0_h \colon \R^M \to \R^L$, which is actually used in the algorithm, as
\[ \mathcal{G}_h^{0}(u) = \mathcal{F}_h^{0} \left ( \sigma_0 + \sum_{m=1}^{M} \sqrt{\lambda_m} u_m \psi_m \right ). \]
Finally, we call $u_{\mathrm{EnKF}}$ the solution of the iterative ensemble Kalman algorithm and the estimated $\sigma_{\mathrm{EnKF}}$ is obtained from the truncated Karhunen-Lo\`eve expansion
\[ \sigma_{\mathrm{EnKF}} = \sigma_0 + \sum_{m=1}^M \sqrt{\lambda_m} u_{\mathrm{EnKF}_m} \psi_m. \]

\subsection{Data}

In the numerical results presented in the following section the computational domain is the unit square
\[ \Omega = (0,1)^2 \subset \R^2. \]
For the discretization parameters we set $\varepsilon = 1/64$ and $h_{\mathrm{obs}} = 1/4096$ and for the forward homogenized problem we use a macro mesh size $h = 1/32$, which is much larger than $h_{\mathrm{obs}}$ and reduces the computational cost significantly. We solve the problem for $K = 3$ Dirichlet conditions $\{ g_k \}_{k=1}^3$ and $g_k = \sqrt{\mu_k} \theta_k$ where $\{ (\mu_k, \theta_k) \}_{k=1}^3$ are couples of eigenvalues and eigenfunctions of the one dimensional discrete Laplacian operator corresponding to the first $K = 3$ smallest eigenvalues. For each $g_k$ we consider its restriction to the boundary $\partial \Omega$ in order to obtain a Dirichlet condition. These functions are orthonormal with respect to the scalar product in $L^2(\Omega)$ and this ensures that each function gives independent information. \\
To compute the boundary integrals in \eqref{boundary_integral_e} and \eqref{boundary_integral_0}, we consider $I = 12$ boundary portions, three for each side of the square $\Omega$. In particular, for each side, all $\Gamma_i$ have length equal to $0.2$ and they consist of the intervals $(0.1,0.3), (0.4,0.6)$ and $(0.7,0.9)$. The functions $\{ \phi_i \}_{i=1}^{12}$ are hat functions with $\mathrm{supp} \; (\phi_i) = \Gamma_i$, which take value one at the midpoint and value $0$ at the extremes of $\Gamma_i$. Then the parameter of the noise, which perturbs the observations, is $\gamma = 0.01$. \\
Moreover, regarding the prior distribution for the unknown, we consider $\sigma_0 = 0$ and the parameters of the covariance matrices are $\delta = 0.05$ and $\lambda = 0.5$. In the truncated Karhunen-Lo\`eve expansion we take $M = 100$. Finally, about the ensemble Kalman method, we consider $J = 1000$ particles for each ensemble and $500$ iterations. \\
The exact tensor $A_{\sigma^*}^{\varepsilon}$ is given by
\begin{align*}
a_{11} \left ( \sigma^*(x), \frac{x}{\varepsilon} \right ) = & \; e^{\sigma^*(x)} \left ( \cos^2 \left ( \frac{2 \pi x_1}{\varepsilon} \right ) + 1 \right ) + \cos^2 \left ( 2 \pi \frac{x_2}{\varepsilon} \right ), \\
a_{12} \left ( \sigma^*(x), \frac{x}{\varepsilon} \right ) = & \; 0, \\
a_{21} \left ( \sigma^*(x), \frac{x}{\varepsilon} \right ) = & \; 0, \\
a_{22} \left ( \sigma^*(x), \frac{x}{\varepsilon} \right ) = & \; e^{\sigma^*(x)} \left ( \sin \left ( \frac{2 \pi x_2}{\varepsilon} \right ) + 2 \right ) + \cos^2 \left ( 2 \pi \frac{x_1}{\varepsilon} \right ),
\end{align*}
where
\begin{equation*}
\sigma^*(x) = \log (1.3 + 0.3 \mathbbm{1}_{D_1} - 0.4 \mathbbm{1}_{D_2}),
\end{equation*}
and
\begin{align*}
D_1 = & \left \{ x = (x_1,x_2) \colon \left ( x_1 - \frac{5}{16} \right )^2 + \left ( x_2 - \frac{11}{16} \right )^2 \le 0.025 \right \}, \\
D_2 = & \left \{ x = (x_1,x_2) \colon \left ( x_1 - \frac{11}{16} \right )^2 + \left ( x_2 - \frac{5}{16} \right )^2 \le 0.025 \right \}.
\end{align*}
Figure \ref{fig:exact_unknown} shows the exact unknown $\sigma^*$. Note that $\sigma^*$ is a non-continuous function, but, in order to approximate it, we are using a truncated Karhunen-Lo\`eve expansion, where the eigenfunctions are smooth.

\begin{figure}[t]
	\centering
	\includegraphics[]{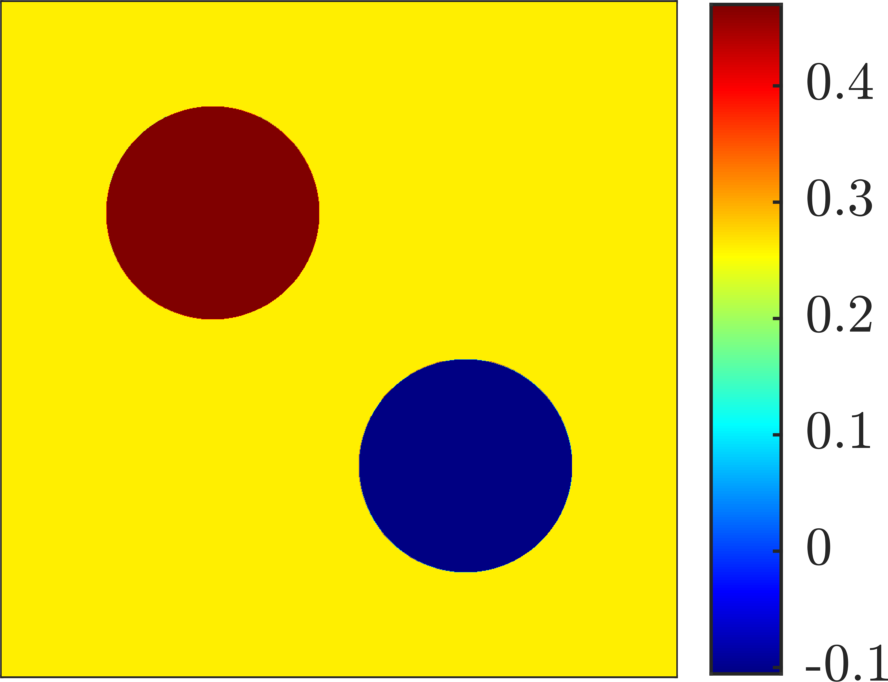}
	\caption{Exact unknown $\sigma^*$ employed for numerical experiments.}
	\label{fig:exact_unknown}
\end{figure}

One can verify that the tensor $A^{\varepsilon}_{\sigma}$ satisfies Assumption \ref{ass_A}. In particular, for $\xi \in \R^2$ we have
\begin{equation*}
A^{\varepsilon}_{\sigma} \xi \cdot \xi = a_{1,1} \left ( \sigma(x), \frac{x}{\varepsilon} \right ) \xi_1^2 + a_{2,2} \left ( \sigma(x), \frac{x}{\varepsilon} \right ) \xi_2^2 \ge e^{\sigma(x)} ( \xi_1^2 + \xi_2^2 ) \ge e^{\sigma_-} \norm{\xi}_2^2.
\end{equation*}
Moreover, since the $\mathrm{EnKF}$ algorithm estimates the coefficients $\{ u_m \}_{m=1}^M$ of the truncated Karhunen-Lo\`eve expansion, we show that $A^{\varepsilon}(u) \colon \R^M \to L^{\infty}(\Omega, \R^{d \times d})$, which maps $u$ into $A^{\varepsilon}_{\sigma_u}$, is Lipschitz. In fact we first have
	\begin{equation*}
	\norm{A^{\varepsilon}(u_1) - A^{\varepsilon}(u_2)}_{L^{\infty}(\Omega, \R^{d \times d})} \le \sqrt{13} e^{\sigma^+} \sup_{x \in \Omega} \abs{\sigma_{u_1}(x) - \sigma_{u_2}(x)},
	\end{equation*}
	then using the truncated Karhunen-Lo\`eve expansion and the Cauchy-Schwarz inequality we obtain
	\begin{equation*}
	\norm{A^{\varepsilon}(u_1) - A^{\varepsilon}(u_2)}_{L^{\infty}(\Omega, \R^{d \times d})} \le \sqrt{13} e^{\sigma^+} \sup_{x \in \Omega} \left ( \sum_{m=1}^M \lambda_m \psi_m^2(x) \right )^{1/2} \norm{u_1 - u_2}_2,
	\end{equation*}
	which shows that $A^\epl(u)$ is Lipschitz with constant equal to $\sqrt{13} e^{\sigma^+} \sup_{x \in \Omega} \left ( \sum_{m=1}^M \lambda_m \psi_m^2(x) \right )^{1/2}$.

\subsection{Results}

\begin{figure}[t]
	\centering
	\begin{tabular}{cccc}
		\includegraphics[]{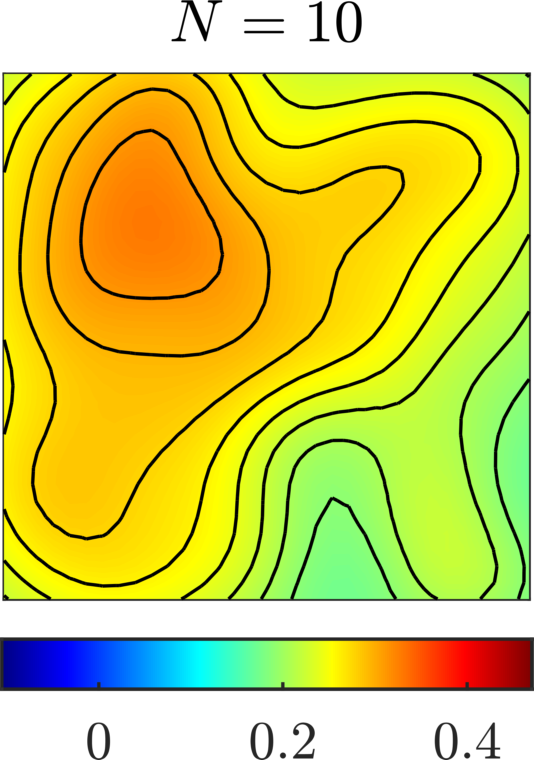} &
		\includegraphics[]{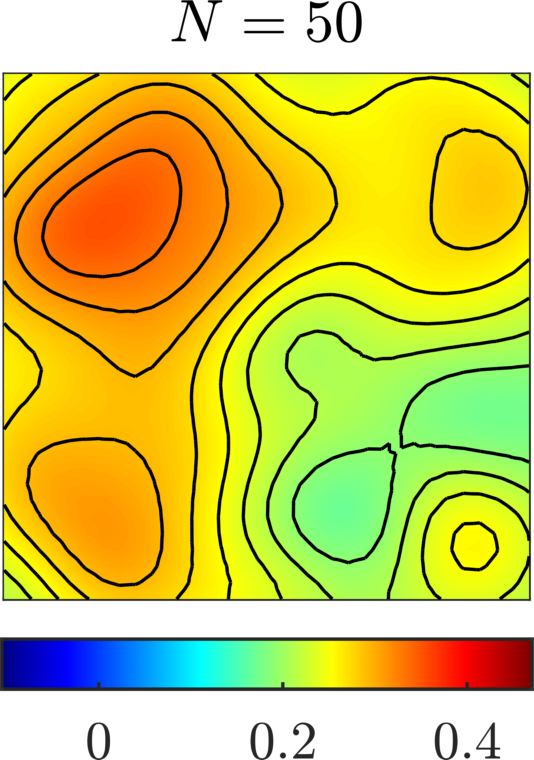} &
		\includegraphics[]{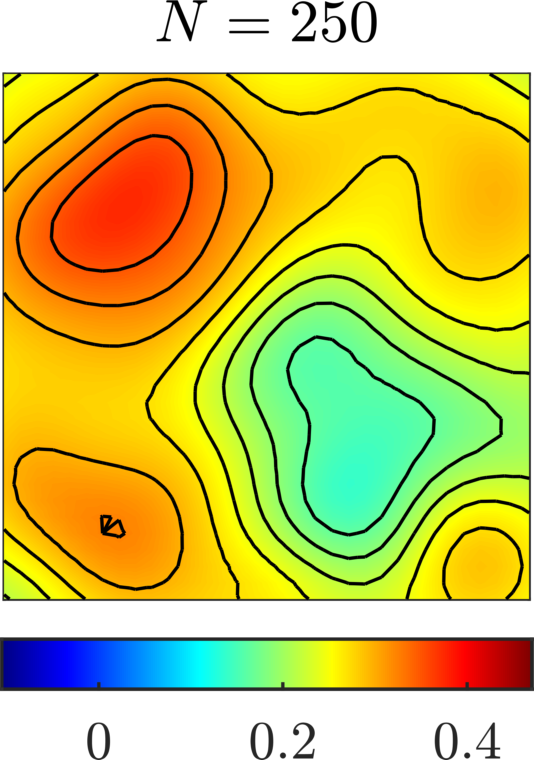} &
		\includegraphics[]{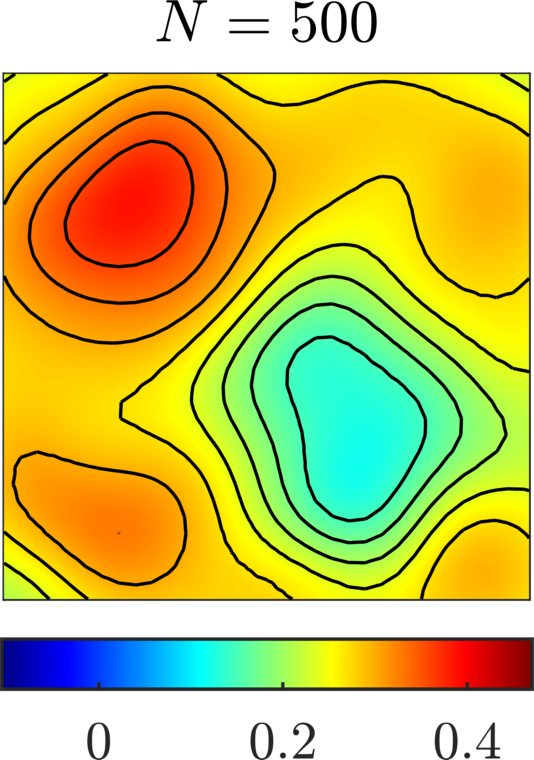}
	\end{tabular}
	\caption{EnKF estimation after $N = \{10, 50, 250, 500\}$ iterations.}
	\label{fig:best_solution}
\end{figure}

We first fix the multiscale parameter $\epl = 1/32$ and the ensemble size $J = 500$ and study the evolution with respect to the number of steps. In Figure \ref{fig:best_solution} we plot the estimation $\sigma_{\mathrm{EnKF}}$ after $10, 50, 250$ and $500$ iterations of the ensemble Kalman algorithm. We clearly see that the approximation gets better as the number of iterations increases and that convergence has been reached. In particular, already after $N = 250$ iterations the algorithm seem to have reached convergence. We point out that we obtain a quite good approximation of the real unknown $\sigma^*$ , indeed we are trying to recover a non-continuous function in the whole domain given only some observations at the boundary.

\begin{figure}[t]
	\centering
	\begin{tabular}{cccc}
		\includegraphics[]{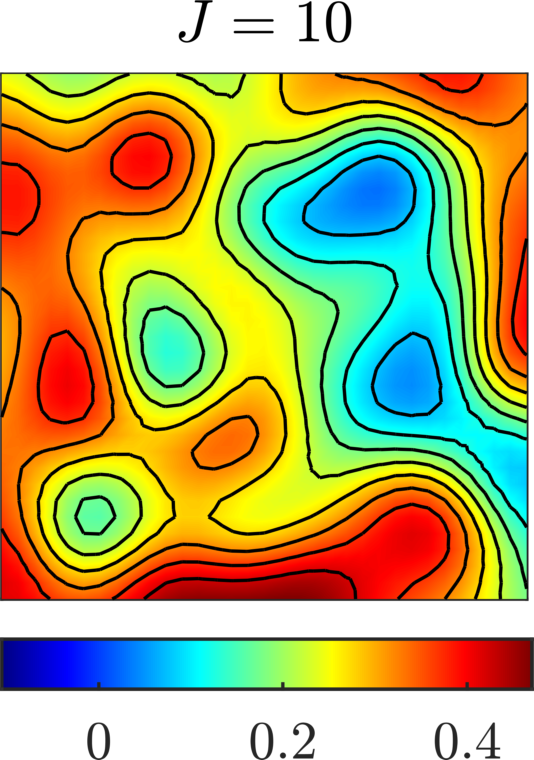} &
		\includegraphics[]{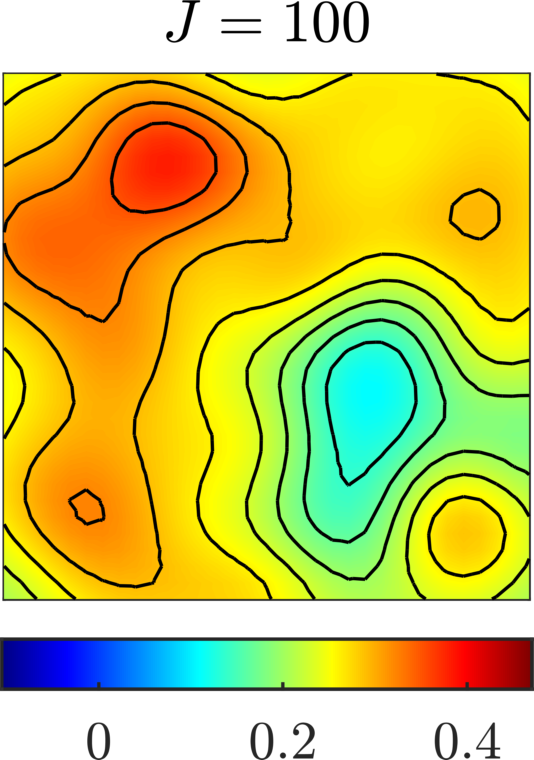} &
		\includegraphics[]{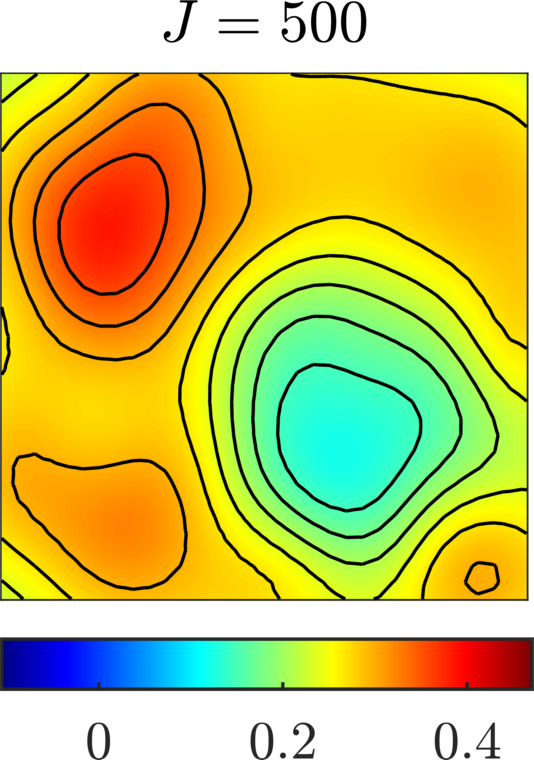} &
		\includegraphics[]{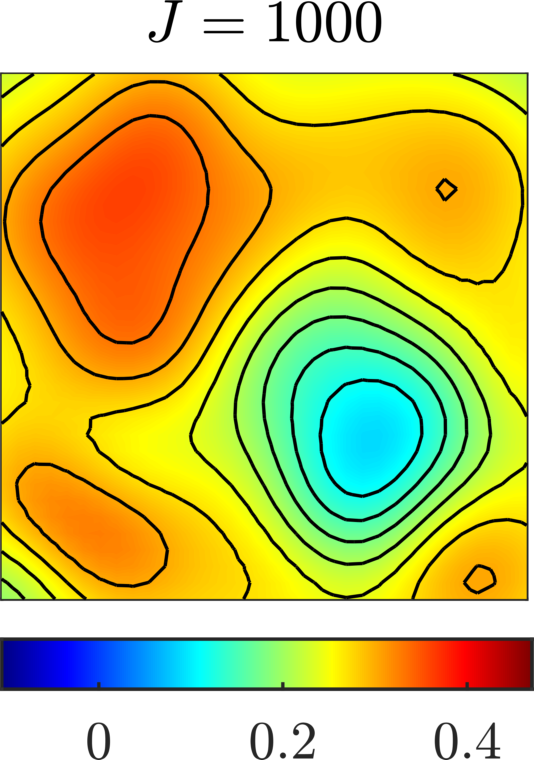}
	\end{tabular}
	\caption{EnKF estimation after $N = 500$ iterations with ensemble size $J = \{10, 100, 500, 1000\}$.}
	\label{fig:comparison_J}
\end{figure}

We now perform a sensitivity analysis with respect to the ensemble size. In Figure \ref{fig:comparison_J} we vary the number of particles $J$ and we compare the results obtained at the end of the algorithm after $500$ iterations for $\epl=1/32$. As expected, the approximation becomes better when the ensemble contains more particles. In particular, note that if the number of particles is too small, e.g. $J = 10$, then the approximation is not satisfying. 

Further, we fix the ensemble size $J = 500$ and we perform $N = 500$ iterations of the EnKF for different values of the multiscale parameter. Results, shown in Figure \ref{fig:comparison_e}, highlight how the approximation becomes worse when $\varepsilon$ is bigger, indeed the homogenized problem becomes too different with respect to the multiscale one and, if $\varepsilon$ is too big, the solution does not approximate the true unknown.

\begin{figure}[t]
	\centering
	\begin{tabular}{cccc}
		\includegraphics[]{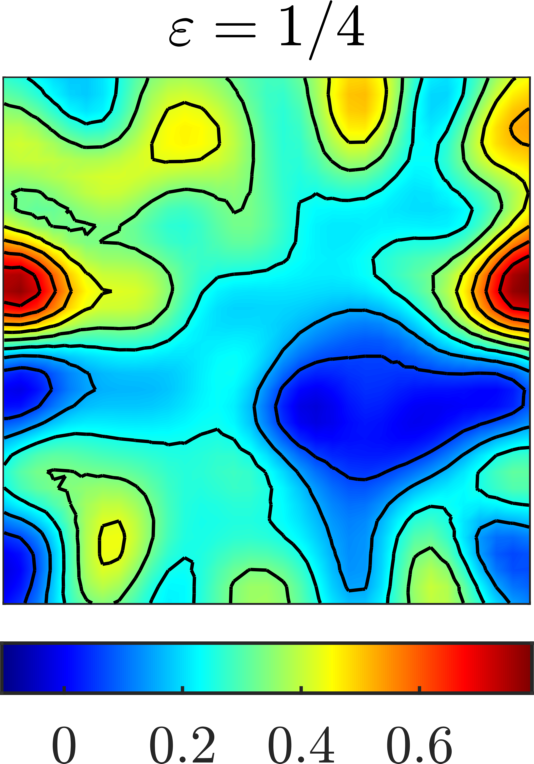} &
		\includegraphics[]{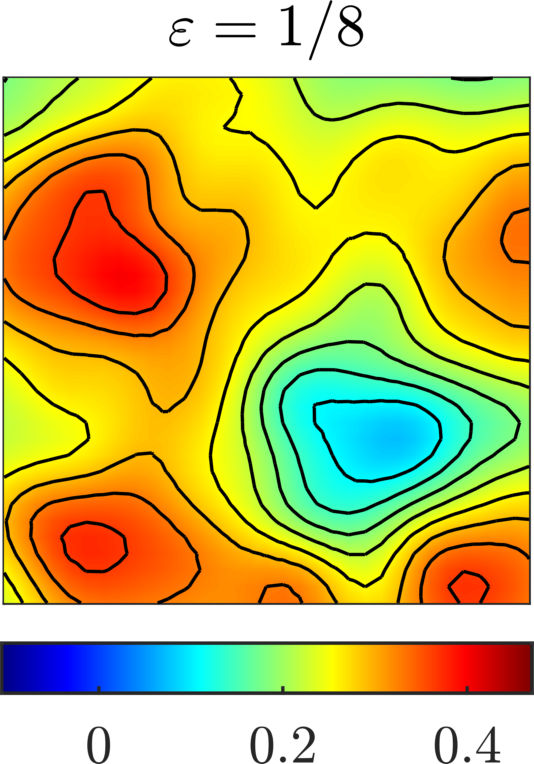} &
		\includegraphics[]{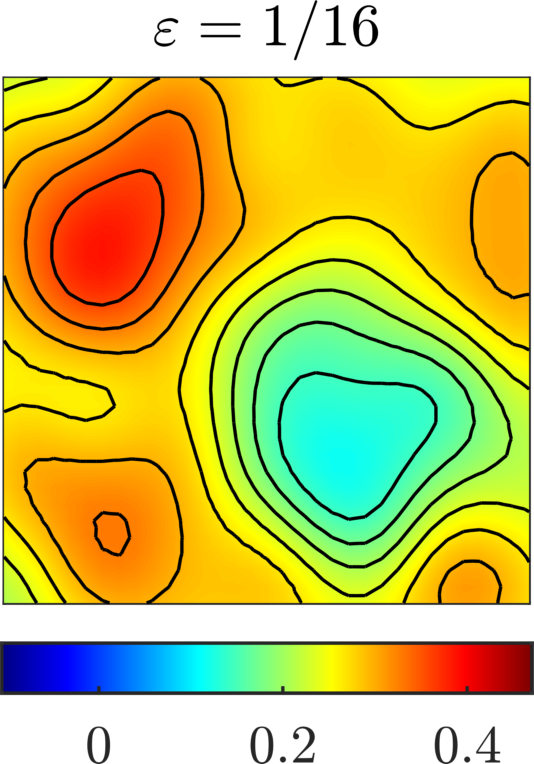} &
		\includegraphics[]{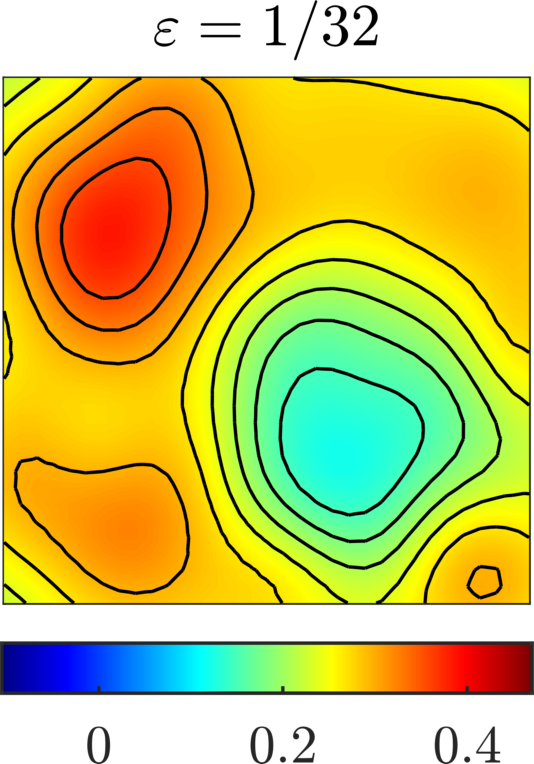}
	\end{tabular}
	\caption{EnKF estimation after $N = 500$ iterations for the multiscale parameter $\varepsilon = \{1/4, 1/8, 1/16, 1/32\}$.}
	\label{fig:comparison_e}
\end{figure}

Moreover, in order to obtain good results even in case $\varepsilon$ is not close to the asymptotic limit $\varepsilon \to 0$, in Figure \ref{fig:comparison_e_model_error} we apply offline modelling error estimation with $N_{\mathcal{E}} = 20$ and we plot the solution of the inverse problem \eqref{model_error_y4} for different values of the multiscale parameter $\varepsilon$. Comparing these plots with the ones in Figure \ref{fig:comparison_e}, in particular for $\varepsilon = 1/4$, we observe that the modelling error estimation significantly improves the results.

\begin{figure}[t]
	\centering
	\begin{tabular}{cccc}
		\includegraphics[]{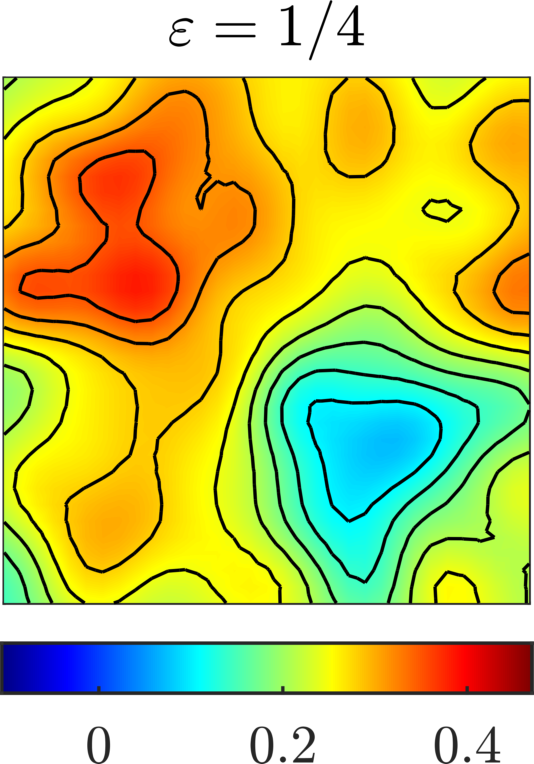} &
		\includegraphics[]{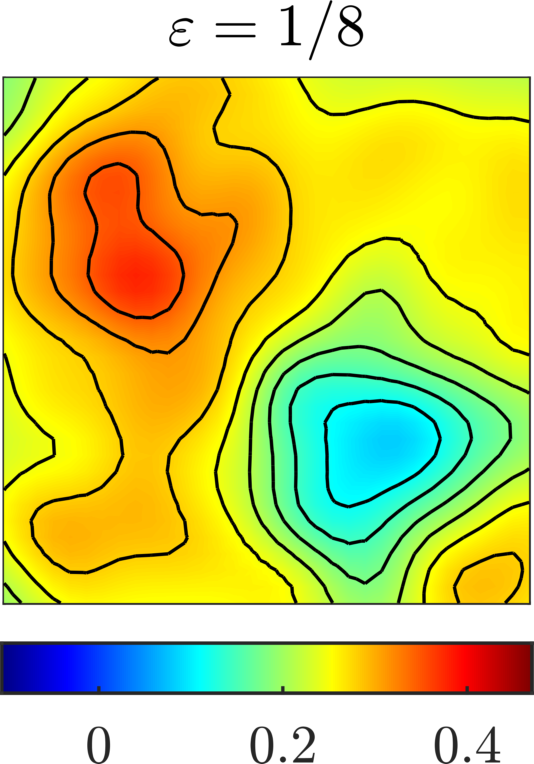} &
		\includegraphics[]{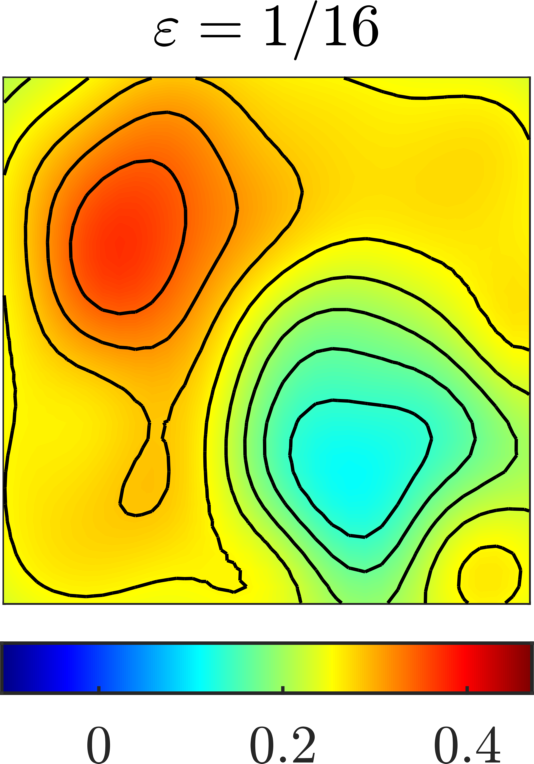} &
		\includegraphics[]{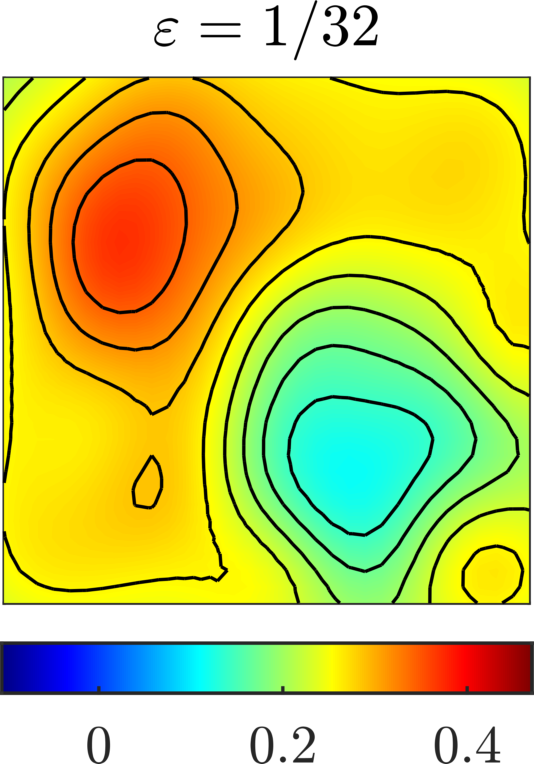}
	\end{tabular}
	\caption{EnKF with offline modeling error estimation after $500$ iterations for the multiscale parameter $\varepsilon = \{1/4, 1/8, 1/16, 1/32\}$.}
	\label{fig:comparison_e_model_error}
\end{figure}

Finally, in Figure \ref{fig:comparison_e_model_error_levels} we show the results obtained by applying the ensemble Kalman method with dynamic updating of the modelling error distribution with $\mathcal{L} = 5$ levels, $N_{\mathcal{E}}^{\ell} = 4$ samples and $N^{\ell} = 100$ iterations at each level $\ell = 1, \dots, \mathcal{L}$. The number of resolutions of the full multiscale problem is $20$ and the total number of iterations is $500$, which are equal to the previous approach, where the distribution of the modelling error was approximated offline. Comparing these plots with the ones in Figure \ref{fig:comparison_e_model_error}, we note that updating the distribution of the modelling error dynamically still improves the results.

\begin{figure}[t!]
	\centering
	\begin{tabular}{cccc}
		\includegraphics[]{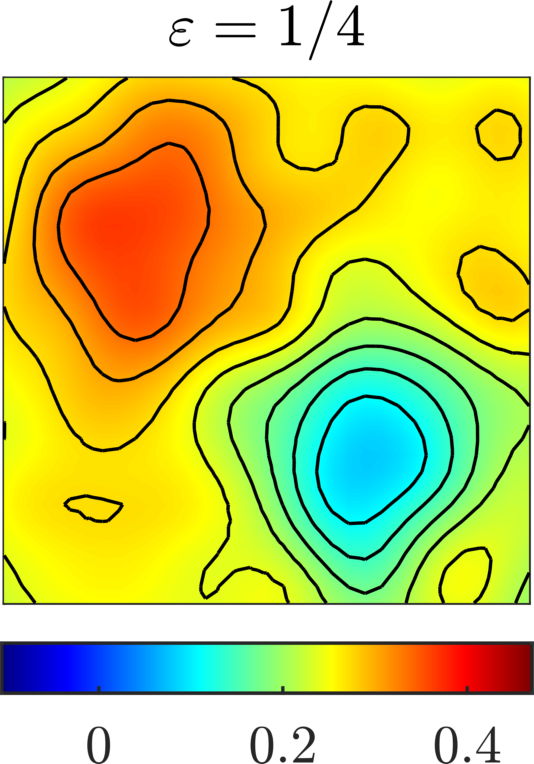} &
		\includegraphics[]{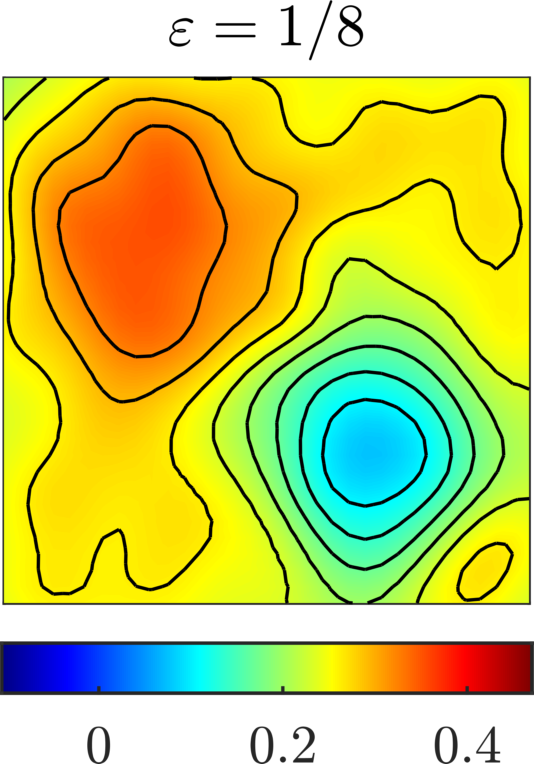} &
		\includegraphics[]{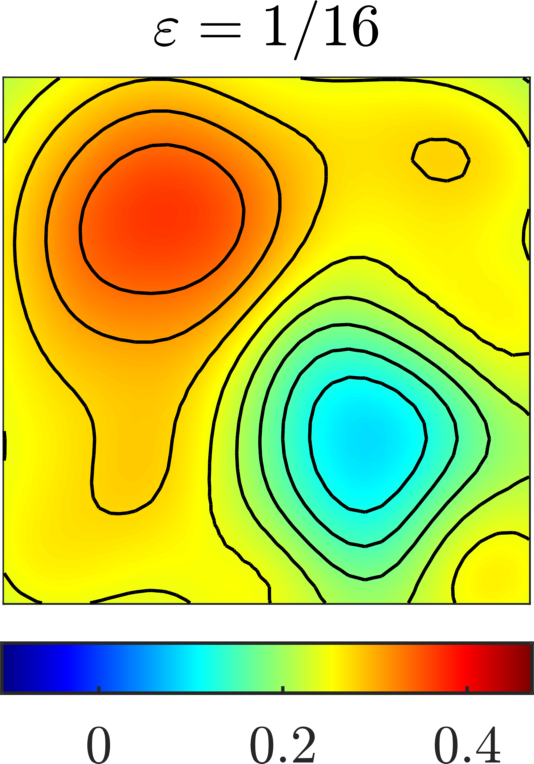} &
		\includegraphics[]{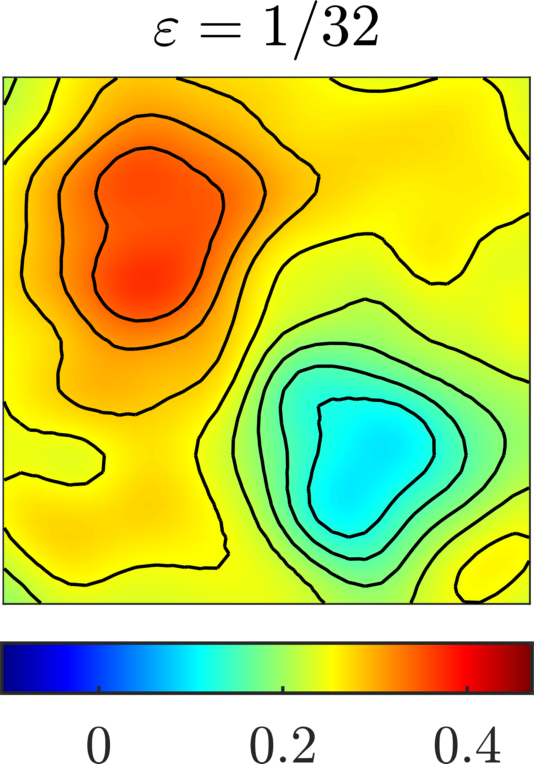}
	\end{tabular}
	\caption{EnKF with online iterative modeling error estimation after $500$ iterations for the multiscale parameter $\varepsilon = \{1/4, 1/8, 1/16, 1/32\}$.}
	\label{fig:comparison_e_model_error_levels}
\end{figure}

\section{Conclusion}

In this paper we analyzed the ensemble Kalman inversion methodology in the context of inverse problems for multiscale elliptic PDEs with tensors highly oscillatory at a scale $\epl \ll 1$. The multiscale algorithm we propose relies on the EnKF, on a surrogate homogenized forward operator and on numerical homogenization techniques such as the FE-HMM. It guarantees a significant reduction in computational cost for problems which would be otherwise computationally  involved or unfeasible. In  Theorem \ref{convergence_result_full} we have shown that the ensemble of particles approximating the unknown parameter generated by our multiscale algorithm converges to the ensemble generated by the true model as the small scale parameter $\epl$ and the numerical discretization parameter $h$ go to zero. Furthermore in a Bayesian framework, we have shown in Theorem \ref{convergence_posterior_distributions} that the discrete probability measure based on the ensemble originating from our multiscale algorithm converges to the measure generated by the true model, again as $\epl$ and $h$ go to zero.
Hence when $\epl \ll 1$ and the full model is expensive to solve, the multiscale numerical method we propose is both accurate and efficient to recover an unknown parameter in multiscale elliptic PDEs. Moreover, we equipped our method with a technique which allows to account for the discrepancy between the artificial homogenized surrogate forward model and the true multiscale data, thus alleviating the effects of model misspecification. This technique requires additional offline or online computations involving the numerical solution of the full multiscale problem. The optimal number of such additional solves is quantified in Theorem  \ref{inequality_modelling_error} and Theorem \ref{inequality2_modelling_error}. In particular, we have proved that the number of solves needed to reach any required accuracy tends to zero when the small scale parameter $\epl$ and the numerical discretization parameter $h$ vanish. Hence, we can conclude that accounting for model misspecification is particularly beneficial for mid-range values of $\epl$, when a small number of full solves should be computationally affordable. The efficiency and usefulness of the multiscale algorithm have been further demonstrated through a series of numerical experiments.

\clearpage 

\section*{Appendix}

\subsection*{Proof of Lemma \ref{G_lipschitz}}

Let $u_1, u_2 \in \mathbb{R}^M$, and $p_1 = \mathcal S(u_1)$, $p_2 = \mathcal S(u_2)$. From the weak formulations of \eqref{problem_lemma} we get that
\begin{equation*}
\int_\Omega \big(A_{u_1}\nabla p_1 - A_{u_2} \nabla p_2\big) \cdot \nabla v = 0 \qquad \text{for all } v \in H^1_0(\Omega),	
\end{equation*}
which yields
\begin{equation*}
\int_{\Omega} A_{u_1} (\nabla p_1 - \nabla p_2) \cdot \nabla v = - \int_{\Omega} (A_{u_1} - A_{u_2}) \nabla p_2 \cdot \nabla v.
\end{equation*}
Then choosing $v = p_1 - p_2$, by the hypotheses on $A_u$ and applying the H\"older inequality we obtain
\begin{equation*}
\alpha \norm{\nabla p_1 - \nabla p_2}_{L^2(\Omega; \R^d)}^2 \le M \norm{u_1 - u_2}_2 \norm{\nabla p_2}_{L^2(\Omega; \R^d)} \norm{\nabla p_1 - \nabla p_2}_{L^2(\Omega; \R^d)},
\end{equation*}
which due a standard coercivity argument implies 
\begin{equation} \label{intermediate}
\norm{\nabla p_1 - \nabla p_2}_{L^2(\Omega; \R^d)} \le \frac{M C_p}{\alpha^2} \norm{f}_{L^2(\Omega)} \norm{u_1 - u_2}_2,
\end{equation}
where $C_p$ is the Poincaré constant associated to the domain $\Omega$. Hence \eqref{intermediate} shows that $\mathcal{S}$ is Lipschitz with constant
\[ L_{\mathcal{S}} = \frac{M C_p}{\alpha^2} \norm{f}_{L^2(\Omega)}. \]
Finally, since $\mathcal{G}$ is the composition of two Lipschitz operators, we deduce that it is also Lipschitz with constant $L_{\mathcal G} = L_{\mathcal O} L_{\mathcal S}$.
\qed

\subsection*{Proof of Lemma \ref{f_goes_to_0}}

Let us consider an ensemble $u \in \mathcal U_{J,M}$ with particles $u^{(j)} \in \R^M$, for $j = 1, \ldots, J$. For each particle we have
\begin{equation}
\norm{\mathcal{G}^{\varepsilon}(u^{(j)}) - \mathcal{G}^0(u^{(j)})}_2 = \norm{\mathcal{O}(\mathcal{S}^{\varepsilon}(u^{(j)})) - \mathcal{O}(\mathcal{S}^0(u^{(j)}))}_2 \le C_{\mathcal O} \norm{p^{\varepsilon}(u^{(j)}) - p^0(u^{(j)})}_{L^2(\Omega)},
\end{equation}
where we write explicitly the dependence of the solutions $p^{\varepsilon}$ and $p^0$ on the particle they are generated by. Due to homogenization theory, we have that $p^{\varepsilon}(u^{(j)}) \toweak p^0(u^{(j)})$ in $H^1_0(\Omega)$ for all $j=1,\dots,J$, and therefore $p^{\varepsilon}(u^{(j)}) \to p^0(u^{(j)})$ in $L^2(\Omega)$, which implies
\[ e(\varepsilon, u) = \frac{1}{J} \sum_{j=1}^J \norm{\mathcal{G}^{\varepsilon}(u^{(j)}) - \mathcal{G}^0(u^{(j)})}_2 \le \frac{C_{\mathcal O}}{J} \sum_{j=1}^J \norm{p^{\varepsilon}(u^{(j)}) - p^0(u^{(j)})}_{L^2(\Omega)} \to 0. \]
Moreover, if the solution of the homogenized problem $p^0$ is sufficiently smooth independently of $u$, namely $p^0 \in H^2(\Omega)$, letting $C > 0$ be a constant independent of $\epl$, we have by \cite{MoV97} for all $j = 1, \dots, J$
\[ \norm{p^{\varepsilon}(u^{(j)}) - p^0(u^{(j)})}_{L^2(\Omega)} \le C \varepsilon, \]
which implies
\[ e(\varepsilon, u) = \frac{1}{J} \sum_{j=1}^J \norm{\mathcal{G}^{\varepsilon}(u^{(j)}) - \mathcal{G}^0(u^{(j)})}_2 \le \frac{C_{\mathcal O}}{J} \sum_{j=1}^J \norm{p^{\varepsilon}(u^{(j)}) - p^0(u^{(j)})}_{L^2(\Omega)} \le C_{\mathcal O} C \varepsilon, \]
and defining $K = C_{\mathcal O} C$ gives the desired result.
\qed

\subsection*{Proof of Lemma \ref{covariance_bound}}

First, for all $x \in B_R(u^*)$ we have
\begin{equation} \label{bound_values}
\begin{aligned}
\norm{x}_2 &\le \norm{x - u^*}_2 + \norm{u^*}_2 \le R + \norm{u^*}_2 \eqdef m, \\
\norm{\mathcal{G}(x)}_2 &\le \norm{\mathcal{G}(x) - \mathcal{G}(u^*)}_2 + \norm{\mathcal{G}(u^*)}_2 \le C_{\mathcal G} \norm{x - u^*}_2 + \norm{\mathcal{G}(u^*)}_2 \le C_{\mathcal G} R + \norm{\mathcal{G}(u^*)}_2 \eqdef M.
\end{aligned}
\end{equation}
We can also deduce the same bounds for the mean values
\begin{equation} \label{bound_mean}
\norm{\bar{u}}_2 \le \frac{1}{J} \sum_{j=1}^J \norm{u^{(j)}}_2 \le m, \qquad \text{and} \qquad \norm{\bar{\mathcal{G}}}_2 \le \frac{1}{J} \sum_{j=1}^J \norm{\mathcal{G}(u^{(j)})}_2 \le M.
\end{equation}
Then by \eqref{bound_values} and \eqref{bound_mean} we get
\begin{align*}
\norm{C^{up}(u)}_2 = & \; \sup_{x \in \mathbb{R}^L \colon \norm{x}_2 = 1} \left \lVert \frac{1}{J} \sum_{j=1}^J (u^{(j)} - \bar{u}) (\mathcal{G}(u^{(j)}) - \bar{\mathcal{G}})^T x \right \rVert_2 \\
\le & \; \frac{1}{J} \sum_{j=1}^J \left ( \norm{\mathcal{G}(u^{(j)})}_2 + \norm{\bar{\mathcal{G}}}_2 \right ) \left ( \norm{u^{(j)}}_2 + \norm{\bar{u}}_2 \right ) \\
\le & \; 4Mm,
\end{align*}
and defining $C_1 = 4Mm$ we get $(i)$. The argument is similar for the matrix $C^{pp}(u)$, for which we have
\begin{equation*}
\norm{C^{pp}(u)}_2 \le \frac{1}{J} \sum_{j=1}^J \left ( \norm{\mathcal{G}(u^{(j)})}_2 + \norm{\bar{\mathcal{G}}}_2 \right )^2 \le 4M^2,
\end{equation*}
and defining $C_2 = 4M^2$ we get $(ii)$. Before proving $(iii)$ and $(iv)$, we need the following estimates for two ensemble of particles $u_1$ and $u_2$
\begin{equation} \label{diference_uG}
\begin{aligned}
\norm{\bar{u}_1 - \bar{u}_2}_2 &= \left \lVert \frac{1}{J} \sum_{j=1}^J (u_1^{(j)} - u_2^{(j)}) \right \rVert_2 \le \frac{1}{J} \sum_{j=1}^J \norm{u_1^{(j)} - u_2^{(j)}}_2 = \norm{u_1 - u_2}, \\
\norm{\bar{\mathcal{G}}_1 - \bar{\mathcal{G}}_2}_2 &= \left \lVert \frac{1}{J} \sum_{j=1}^J (\mathcal{G}(u_1^{(j)}) - \mathcal{G}(u_2^{(j)})) \right \rVert_2 \le \frac{C_{\mathcal G}}{J} \sum_{j=1}^J \norm{u_1^{(j)} - u_2^{(j)}}_2 = C_{\mathcal G} \norm{u_1 - u_2}.
\end{aligned}
\end{equation}
Then we have
\begin{equation}
\begin{aligned}
& \norm{C^{up}(u_1) - C^{up}(u_2)}_2 \\
&\qquad = \sup_{x \in \mathbb{R}^L \colon \norm{x}_2 = 1} \left \lVert \frac{1}{J} \sum_{j=1}^J \left [ (u_1^{(j)} - \bar{u}_1) (\mathcal{G}(u_1^{(j)}) - \bar{\mathcal{G}}_1)^T x - (u_2^{(j)} - \bar{u}_2) (\mathcal{G}(u_2^{(j)}) - \bar{\mathcal{G}}_2)^T x \right ] \right \rVert_2 \\
&\qquad \le \frac{1}{J} \sum_{j=1}^J \left( \norm{u_1^{(j)}}_2 + \norm{\bar{u}_1}_2 \right) \left( \norm{\mathcal{G}(u_1^{(j)}) - \mathcal{G}(u_2^{(j)})}_2 + \norm{\bar{\mathcal{G}}_2 - \bar{\mathcal{G}}_1}_2 \right) \\
&\qquad\quad + \frac{1}{J} \sum_{j=1}^J \left( \norm{u_1^{(j)} - u_2^{(j)}}_2 + \norm{\bar{u}_2 - \bar{u}_1}_2 \right) \left( \norm{\mathcal{G}(u_2^{(j)})}_2 + \norm{\bar{\mathcal{G}}_2}_2 \right),
\end{aligned}
\end{equation}
and since $\mathcal G$ is Lipschitz and due to \eqref{bound_values}, \eqref{bound_mean}, \eqref{diference_uG}, we obtain
\begin{align*}
\norm{C^{up}(u_1) - C^{up}(u_2)}_2 &\le 2m ( C_{\mathcal G} J \norm{u_1 - u_2} + C_{\mathcal G} \norm{u_1 - u_2} ) + ( J \norm{u_1 - u_2} + \norm{u_1 - u_2} ) 2M \\
&\le 2 (J+1) (mC_{\mathcal G} + M) \norm{u_1 - u_2},
\end{align*}
and defining $C_3 = 2 (J+1) (mC_{\mathcal G} + M)$ we get $(iii)$. The argument is similar for the matrix $C^{pp}(u)$, for which we have
\begin{equation}
\begin{aligned}
\norm{C^{pp}(u_1) - C^{pp}(u_2)}_2 &\le \frac{1}{J} \sum_{j=1}^J \left( \norm{\mathcal{G}(u_1^{(j)})} + \norm{\bar{\mathcal{G}}_1}_2 \right) \left( \norm{\mathcal{G}(u_1^{(j)}) - \mathcal{G}(u_2^{(j)})}_2 + \norm{\bar{\mathcal{G}}_2 - \bar{\mathcal{G}}_1}_2 \right) \\
&\quad + \frac{1}{J} \sum_{j=1}^J \left( \norm{\mathcal{G}(u_1^{(j)}) - \mathcal{G}(u_2^{(j)})}_2 + \norm{\bar{\mathcal{G}}_2 - \bar{\mathcal{G}}_1}_2 \right) \left( \norm{\mathcal{G}(u_2^{(j)})} + \norm{\bar{\mathcal{G}}_2} \right) \\
&\le 4(J+1)M C_{\mathcal G},
\end{aligned}
\end{equation}
and defining  $C_4 = 4(J+1)MC_{\mathcal G}$ we get $(iv)$, which concludes the proof.
\qed

\subsection*{Proof of Lemma \ref{fh_goes_to_0}}

Let us consider an ensemble $u \in \mathcal U_{J,M}$ with particles $u^{(j)} \in \R^M$, for $j = 1, \ldots, J$. For each particle we have 
\begin{equation}
\norm{\mathcal{G}^0_h(u^{(j)}) - \mathcal{G}^0(u^{(j)})}_2 = \norm{\mathcal{O}(\mathcal{S}^0_h(u^{(j)})) - \mathcal{O}(\mathcal{S}^0(u^{(j)}))}_2 
\le C_{\mathcal O} \norm{p^0_h(u^{(j)}) - p^0(u^{(j)})}_{L^2(\Omega)},
\end{equation}
where we write explicitly the dependence of the solutions $p^0$ and $p^0_h$ on the particle they are generated by. Then due to standard a priori error estimates of FEM (see e.g. \cite[Theorem 3.2.5]{Cia02}) and higher order boundary regularity results for elliptic partial differential equations (see e.g. \cite[Theorem 6.3.5]{Eva10}) we have for all $j = 1, \ldots, J$
\[ \norm{p^0_h(u^{(j)}) - p^0(u^{(j)})}_{L^2(\Omega)} \le C \abs{p^0(u^{(j)})}_{H^{s+1}(\Omega)} h^{s+1} \le C \norm{f}_{H^{q-1}(\Omega)} h^{s+1}, \]
where $C>0$ is a constant independent of $h$.
Therefore, we obtain
\begin{equation}
\tilde{e}(h, u) = \frac1J \sum_{j=1}^J \norm{\mathcal{G}^0_h(u^{(j)}) - \mathcal{G}^0(u^{(j)})}_2 \le C_{\mathcal O} C \norm{f}_{H^{q-1}(\Omega)} h^{s+1},
\end{equation}
and defining $\tilde{K} = C_{\mathcal O} C \norm{f}_{H^{q-1}(\Omega)}$ gives the desired result.
\qed

\subsection*{Proof of Lemma \ref{equivalence_convergence_DW1}}

We follow the same steps of the proof of Theorem 5.9 in \cite{San15}. Let us first recall the duality formula for the Wasserstein distance with $p=1$
\[ W_{1,s}(\mu_n, \mu) = \sup_{\varphi \in \Phi} \left \{ \int_{B_R(u^*)} \varphi d(\mu_n - \mu) \right \}, \]
where $\Phi$ is the set of all globally Lipschitz continuous functions $\varphi \colon B_R(u^*) \to \R$ with Lipschitz constant $C_{\mathrm{Lip}} \le 1$. Note that if $\varphi \in \Phi$, then also $- \varphi \in \Phi$. Hence we deduce that
\begin{equation}
W_{1,s}(\mu_n, \mu) = \sup_{\varphi \in \Phi} \left \{ \left | \int_{B_R(u^*)} \varphi d(\mu_n - \mu) \right | \right \}.
\end{equation}
Then we have
\begin{equation}
\sup_{\varphi \in \Phi} \mathbb{E}_{\xi} \left [ \left | \int_{B_R(u^*)} \varphi d(\mu_n - \mu) \right | \right ] \le \mathbb{E}_{\xi} \left [ \sup_{\varphi \in \Phi} \left \{ \left | \int_{B_R(u^*)} \varphi d(\mu_n - \mu) \right | \right \} \right ] = \mathbb{E}_{\xi} [ W_{1,s}(\mu_n, \mu) ],
\end{equation}
where the right hand side vanishes by hypothesis. Therefore we obtain
\begin{equation} \label{for_all_phi}
\mathbb{E}_{\xi} \left [ \left | \int_{B_R(u^*)} \varphi d \mu_n - \int_{B_R(u^*)} \varphi d \mu \right | \right ] \to 0,
\end{equation}
for all $\phi \in \Phi$. Finally, we extend \eqref{for_all_phi} to all Lipschitz functions by linearity and to all bounded continuous functions by density, thus proving the desired result.
\qed

\def\cprime{$'$}

\end{document}